\newenvironment{system}[1]
{
\left\{\begin{aligned}{#1}} 
{ \end{aligned}\right.}
\newcommand\dd{\mathrm{d}}
\newtheorem{thm}{Theorem}[section]
\newtheorem{theorem}[thm]{Theorem}
\newtheorem{lem}[thm]{Lemma}
\newtheorem{lemma}[thm]{Lemma}
\newtheorem{prop}[thm]{Proposition}
\newtheorem{proposition}[thm]{Proposition}
\theoremstyle{definition}
\newtheorem{defi}{\textbf{Definition}}
\newtheorem{definition}[defi]{\textbf{Definition}}
\theoremstyle{remark}
\newtheorem{rem}[thm]{Remark}
\newtheorem{remark}[thm]{Remark}
\newtheorem{assumption}{\textbf{Assumption}}
\newcommand\parmatrix[1]{\begin{pmatrix}#1\end{pmatrix}}
\newcommand\ep{\varepsilon}
\newcommand\R{\mathbb{R}}
\newcounter{stepnum}
\newcommand\step{\typeout{Use of step outside a stepping environment ! }}
\newenvironment{stepping}{
	\setcounter{stepnum}{1}
	\renewcommand\step{{\bf Step \thestepnum:~}\stepcounter{stepnum}}
}{\renewcommand\step{\alert{Use of step outside a stepping environment !}}}
\newcommand{\writefoot}[1]{
    \renewcommand{\thefootnote}{}
    \footnotetext{\hspace{-16.5pt}\scriptsize#1}
    \renewcommand{\thefootnote}{\arabic{footnote}}
}
\numberwithin{equation}{section}
\begin{document}

\writefoot{\small \textbf{AMS subject classifications (2020).} Primary: 35K40; Secondary: 35K57, 35K58, 92D25. \smallskip}
\writefoot{\small \textbf{Keywords.} 
Reaction-diffusion system, spreading speed, anisotropic propagation, stability analysis, singular limit, Lyapunov function.\smallskip
}
\writefoot{\small \textbf{Acknowledgements:} 
The initial part of the research was conducted when QG was a JSPS International Research Fellow (FY2017; Graduate School of Mathematical Sciences, The University of Tokyo and MIMS, Meiji University). The research was partially supported by JSPS KAKENHI 16H02151 and 21H00995. QG was partially supported by a PEPS-JCJC grant from CNRS (2019) and by ANR grant “Indyana” Number ANR-21-CE40-0008.
}

\begin{center}
    \begin{minipage}{0.9\textwidth}
	\centering
    \LARGE{\bf Front propagation in hybrid reaction-diffusion epidemic models with spatial heterogeneity. \\ Part I: Spreading speed and asymptotic behavior}\bigskip
    \end{minipage}

    \Large
    Quentin Griette\medskip \\
    \normalsize
    {\it Université Le Havre Normandie, Normandie Univ., \\
	LMAH UR 3821, 76600 Le Havre, France. }\\ 
    {\tt quentin.griette@univ-lehavre.fr}
    \bigskip

    \Large
    Hiroshi Matano \medskip \\ 
    \normalsize
    {\it Meiji Institute for Advanced Study of Mathematical Sciences, Meiji University, \\
    4-21-1 Nakano, Tokyo 164-8525, Japan 
    }\\
    {\tt matano@meiji.ac.jp} \\ 
	\bigskip

	\today
\end{center}

\begin{abstract}
    We consider a two-species reaction-diffusion system in one space dimension that is derived from an epidemiological model in a spatially periodic environment with two types of pathogens: the wild type and the mutant. 
    The system is of a hybrid nature, partly cooperative and partly competitive, but neither of these entirely. 
As a result, the comparison principle does not hold 
    {for the whole system}. We study  spreading properties of solution fronts when the infection is localized initially. 
{We show that there is a well-defined spreading speed} 
{both in the right and left directions} 
{and that it can be computed from the linearized equation at the} 
{leading} 
{edge of the propagation front.} 
    Next we study the case where the coefficients are 
spatially homogeneous and show that, when spreading 
occurs, every 
solution to the Cauchy problem converges to 
the unique positive stationary solution as $t\to\infty$. 
Finally we consider the case of rapidly oscillating coefficients, that is, when the spatial period of the coefficients, denoted by $\ep$, is very small. 
We show that there exists a unique positive stationary solution, and that every positive solution to the Cauchy problem converges to this stationary solution as $t\to\infty$. We then discuss the homogenization limit as $\ep\to 0$. 
\end{abstract}

\section{Introduction}

In this paper we consider the following reaction-diffusion system:
\begin{equation}\label{eq:main-sys}
	\begin{system}
		\relax &u_t=\big(\sigma(x) u_{x}\big)_x+\big(r_u(x)-\kappa_u(x)(u+v)\big)u+\mu_v(x)v-\mu_u(x)u, & t>0, x\in\R, \\
		\relax &v_t=\big(\sigma(x) v_{x}\big)_x+\big(r_v(x)-\kappa_v(x)(u+v)\big)v+\mu_u(x)u-\mu_v(x)v, &t>0, x\in\R.
	\end{system}
\end{equation}
Here $u(t, x)$, $v(t,x) $ stand for the density of a population of individuals living in  a periodically heterogeneous environment. We assume that  the growth rates $r_u(x) $ and $ r_v(x)$ are $L$-periodic functions, and that the coefficients $\kappa_u(x)$ and $\kappa_v(x)$ which represent the intensity of the competition are $L$-periodic and positive. Finally, the coefficients $\mu_u(x)>0$, $\mu_v(x)>0$ {(also $L$-periodic)} denote the mutation rates between the two populations, {which creates an effect of}  cooperative coupling in the region where both $u$ and $v$ are small. 
We consider system \eqref{eq:main-sys} under the initial condition
\begin{equation}\label{main-initial}
u(0,x)=u_0(x),\ \ v(0,x)=v_0(x),\quad\ x\in\R ,
\end{equation}
where $u_0, v_0$ are bounded nonnegative functions on $\R $ whose supports are typically localized in some region. 
Our goal is to study the speed of propagation of fronts that appear in \eqref{eq:main-sys}. 

The above problem is motivated partly by the study of SIS epidemiological models describing the propagation of pathogens that are subject to mutations, such as the following:
\begin{equation} \label{eq:SIS}
	\left\{\begin{aligned}\relax
		&\partial_t S_t = \partial_{x} (\sigma(x)\partial_x S) - \big(\beta_1(x) I_1+\beta_2(x) I_2\big)S + \gamma_1(x) I_1 + \gamma_2(x) I_2, && t>0, x\in\R  \\
		&\partial_t I_1 = \partial_x(\sigma(x) \partial_{x}I_1) + \beta_1(x) S I_1 -\gamma_1(x) I_1 + \mu_2(x) I_2 - \mu_1(x) I_1, && t>0, x\in\R   \\ 
		&\partial_t I_2 = \partial_x(\sigma(x) \partial_{x} I_2)+\beta_2(x) S I_2 - \gamma_2(x) I_2+\mu_1(x) I_1 - \mu_2(x) I_2,  && t>0, x\in\R  . 
	\end{aligned}\right.
\end{equation}
Here the $\beta_1,\beta_2$ denote the infection rates, $\gamma_1,\gamma_2$ the recovery rates, and $\mu_1,\mu_2$ stand for the mutation rates between the pathogens.

It is not difficult to show that the quantity $N(t, x):=S(t,x)+I_1(t,x)+I_2(t,x)$ satisfies a pure diffusion equation
$\partial_t N=\partial_x\left(\sigma(x)\partial_x N\right)$. Therefore, if we assume
that $ N(0, x)$ is constant in $x$, 
then $N(t, x)$ remains constant in $t$ and $x$. 
Thus we obtain that  $u=I_1$, $v=I_2$ satisfy \eqref{eq:main-sys} with 
$r_{*}(x) := N\beta_{i}(x)-\gamma_{i}(x)$ and $\kappa_{i}(x):=\beta_{i}(x)$, where $i=1,2$ and  $*=u,v$ .  
Hence the propagation dynamics of \eqref{eq:SIS} is equivalent to that of \eqref{eq:main-sys}.

System \eqref{eq:SIS} describes the propagation of a genetically unstable pathogen in a population of hosts which exhibits heterogeneity in space. This heterogeneity represents the spatially heterogeneous environment that affects the behavior of individuals depending on where they are. Spatial heterogeneity in the use of antibiotics, fungicides or insecticides affects the transmission of pathogens and pests and is explored as a way to minimize the risk of emergence of drug resistance \cite{Deb-Len-Gan-09}.    
Beaumont et al \cite{Bea-Bur-Duc-Zon-12} study a related model of propagation of salmonella in an industrial hen house. In their study the heterogeneity comes from the alignment of cages separated by free space that allow farmers to take care of the animals. Griette et al \cite{Griette-Alfaro-Raoul-Gandon} studied the propagation properties of a closely related model in the context of the evolution of drug resistance.   

The propagation speed of the solutions of reaction-diffusion equations is often linked to special solutions called \textit{traveling wave solutions}, that are  particular solutions that propagate at a prescribed speed. There exists a large literature around traveling wave solutions and the propagation dynamics of solutions to reaction-diffusion equations and systems, see \cite{Kol-Pet-Pis-1937, Fis-1937, Aro-Wei-75,Aro-Wei-78,Wei-82, Lui-89, Vol-Vol-Vol-94, Lia-Zha-07, Lia-Zha-10} among others. 
When the coefficients depend periodically on the spatial variable such as \eqref{eq:main-sys}, the traveling waves are {sometimes} called \textit{pulsating traveling waves}, see \cite{Shi-Kaw-Ter-86, Xin-00, Ber-Ham-02, Wei-02} among others. {It can be shown that traveling waves also exist for our system but we do not discuss it in the present paper. We shall make a detailed study of traveling waves of \eqref{eq:main-sys} in our forthcoming paper \cite{GM}.}

Our system has a rather intriguing character in the sense that it is cooperative when $(u, v)$ is small while the competitive nature becomes dominant when $(u, v)$ is large. Therefore the standard comparison principle does not apply to the entire system. Such a system has been studied by Wang \cite{Wan-11}, Wang and Castillo-Chavez \cite{Wang-Castillo--Chavez-2012}, Griette and Raoul \cite{Gri-Rao-16}, Girardin \cite{Gir-18, Gir-18-MMMAS}, and Morris, B\"orger and Crooks \cite{Mor-Bor-Cro-19}, when the coefficients are homogeneous in space. 
However, in 
our case, the coefficients are spatially periodic. 
As far as scalar equations are concerned, there 
is a large literature on equations with periodic coefficients, notably \cite{Shi-Kaw-Ter-86, Xin-00, Ber-Ham-02, Wei-02, Lia-Zha-10}. 
As for systems, 
Alfaro and Griette \cite{Alf-Gri-18} constructed a traveling wave for a related system that travels at the expected minimal speed. Apart from this last result, to the best of our knowledge, little  is known for systems of hybrid nature with spatially periodic coefficients.

In this paper we study propagation properties of solutions of \eqref{eq:main-sys}. We first discuss under what conditions propagation occurs by using certain principal eigenvalues. and investigate the spreading speed of solution fronts that start from compactly supported initial data. 

Next we consider the special case where the coefficients are spatially homogeneous. This case has been treated in \cite{Wan-11, Wang-Castillo--Chavez-2012, Gri-Rao-16, Gir-18, Gir-18-MMMAS, Mor-Bor-Cro-19}, 
but the behavior of the solution behind the spreading fronts has not been completely understood. 
We show that, under the assumption that $(u,v)=(0,0)$ is unstable, every positive solution to the Cauchy problem converges to the unique positive stationary solution $(u^*,v^*)$ as $t\to+\infty$ locally uniformly on $\R$. 

Finally we study the system with rapidly oscillating coefficients and discuss their homogenization limit as the spatial period $\ep$ tends to $0$.  Among other things we show that every positive solution to the Cauchy problem converges to the unique positive stationary solution as $t\to+\infty$. Note that, when $\ep$ is not small, such convergence is generally not known. We also prove that the above positive stationary solution and the spreading speed for $\ep>0$ converge to those of the homogenized system as $\ep\to 0$.

Our paper is organized as follows. In section \ref{s:main} we first recall key mathematical notions such as principal eigenvalues of various kinds, left and right spreading speeds, and so on. Then we present our main results including a formula for the spreading speeds (Theorem \ref{thm:main-lindet}), the hair-trigger effect (Theorem~\ref{thm:hairtrigger}), 
global asymptotic stability of stationary solution for the homogeneous 
problem (Theorem \ref{thm:ltb}), and the homogenization limit (Theorem \ref{thm:rapidosc}).

In section \ref{s:proof} we give the proof of those results. This section is further subdivided as follows: in section \ref{ss:principal} we establish some results on the principal eigenvalues of the linearized system; in section \ref{ss:propagation-proof} we prove our statement on the propagation dynamics of solutions of the Cauchy problem; in section \ref{ss:global stability-proof} we prove the global asymptotic stability of the positive equilibrium for the homogeneous problem; in section \ref{ss:homogenization-proof} we prove our statement on the homogenization formula for the speed and the global stability of the positive equilibrium in the case of rapidly oscillating coefficients.

\section{Main results}
\label{s:main}
Throughout this article we make the following assumption on the coefficients of \eqref{eq:main-sys}.
\begin{assumption}[Cooperative-competitive system]\label{as:coop-comp}
	We let $\sigma(x)>0$,   $\kappa_u(x)>0$, $\kappa_v(x)>0$, $\mu_v(x)> 0$, $\mu_u(x)> 0$, be $L$-periodic positive continuous functions and  $r_u(x)$, $r_v(x)$ be $L$-periodic continuous functions of arbitrary sign.  We assume moreover that $\sigma\in C^1(\R )$.
\end{assumption}

Before presenting our main results, let us recall that system \eqref{eq:main-sys} has a cooperative nature for small solutions. To see this, we rewrite \eqref{eq:main-sys} as:
\[
	\begin{system}
		\relax &u_t=\big(\sigma(x) u_{x}\big)_x+\big(r_u(x)-\mu_u(x)-\kappa_u(x)u\big)u+\big(\mu_v(x)-\kappa_u(x)u\big)v,\\
		\relax &v_t=\big(\sigma(x) v_{x}\big)_x+\big(r_v(x)-\mu_v(x)-\kappa_v(x)v\big)v+\big(\mu_u(x)-\kappa_v(x)v\big)u.
	\end{system}
\]
Therefore, if $f^u(x,u,v),\,f^v(x,u.v)$ denote the nonlinearities of the above system, then 
\[
\partial_v f^u(x,u,v)=\mu_v(x)-\kappa_u(x)u, \quad\ \partial_u f^v(x,u,v)=\mu_u(x)-\kappa_v(x)v.
\]
Consequently, we have $\partial_v f^u \geq 0$, $\partial_u f^v \geq 0$ so long as $u, v$ satisfy
\begin{equation}\label{cooperative-zone}
\kappa_u(x)u\leq \mu_v(x),\quad\ \kappa_v(x)v\leq \mu_u(x).
\end{equation}
We call the range of $(u,v)$ satisfying \eqref{cooperative-zone} {\it the cooperative zone} of system \eqref{eq:main-sys}. The cooperative zone becomes larger if the mutation rates $\mu_u, \mu_v$ increase, while it shrinks if the competition rates $\kappa_u, \kappa_v$ increase. It is important that the cooperative zone is always non-empty. Note that, for large values of $(u,v)$ for which neither of the inequalities in \eqref{cooperative-zone} holds, we have $\partial_v f^u < 0$, $\partial_u f^v < 0$, hence \eqref{eq:main-sys} exhibits a competitive nature in this range of $(u,v)$.

\subsection{Principal eigenvalues of the linearized system}\label{ss:eigenvalue}

First we introduce different notions of principal eigenvalues that we use in our results. Even in the scalar case, multiple notions of principal eigenvalues turn out to be useful in the analysis of spreading properties; we refer to Berestycki and Rossi \cite{Ber-Ros-15} and Nadin \cite{Nad-09} for an overview of these notions.

The linearized system associated with \eqref{eq:main-sys} is the following. 
\begin{equation}\label{eq:linearized}
	\begin{system}
		\relax &u_t=\big(\sigma(x) u_{x}\big)_x+r_u(x)u+\mu_v(x)v-\mu_u(x)u, & t>0, x\in\R, \\
		\relax &v_t=\big(\sigma(x) v_{x}\big)_x+r_v(x)v+\mu_u(x)u-\mu_v(x)v, &t>0, x\in\R.
	\end{system}
\end{equation}
Note that this is a cooperative system. 
We first define the notions of periodic principal, $\lambda$-periodic principal and Dirichlet principal eigenelements as follows. 
\begin{definition}[Periodic principal eigenpair]\label{def:periodic-principal-eigenpair}
	By a \textit{periodic principal eigenpair} associated with \eqref{eq:linearized} we mean any pair $\big(\lambda_1^{per}, (\varphi(x), \psi(x))\big)$ where $\lambda_1^{per}\in\R $, $\varphi(x)$ and $\psi(x) $ are positive $L$-periodic smooth functions that satisfy 
	\begin{equation}\label{eq:periodic-principal-eigen}
		\begin{system}
			\relax &  
			L^1[\varphi, \psi](x) :=\big(\sigma(x) \varphi_{x}\big)_x+r_u(x)\varphi+\mu_v(x)\psi-\mu_u(x)\varphi=\lambda_1^{per} \varphi, \\
			\relax & 
			L^2[\varphi, \psi](x) :=
			\big(\sigma(x) \psi_{x}\big)_x+r_v(x)\psi+\mu_u(x)\varphi-\mu_v(x)\psi=\lambda_1^{per}\psi .
		\end{system}
	\end{equation}
	We call $\lambda_1^{per}$  the \textit{principal eigenvalue} and $(\varphi, \psi)$  a \textit{principal eigenvector}.
\end{definition}
It follows from the Krein-Rutman Theorem that $\lambda_1^{per}$ is, indeed, unique, and that $(\varphi, \psi)$ is unique up to multiplication by a positive scalar. Heuristically, $\lambda_1^{per}$ corresponds to the rate of growth of a small population, given that the initial data is $L$-periodic.
\medskip

We continue with the notion of $\lambda$-periodic principal eigenpair.
\begin{definition}[$\lambda$-periodic principal eigenpair]\label{def:k(lambda)}
	For $\lambda>0$, by a \textit{$\lambda$-periodic principal eigenpair} associated with \eqref{eq:linearized} we mean any pair $\big(k(\lambda), (\varphi(x), \psi(x))\big)$ where $k(\lambda)\in\R $, $\varphi(x)$ and $\psi(x) $ are positive $L$-periodic smooth functions that satisfy 
	\begin{equation}\label{eq:lambda-periodic-principal-eigen}
		\begin{system}
			\relax & L^1_\lambda[\varphi, \psi](x) := e^{\lambda x} L^1[e^{-\lambda x}\varphi, e^{-\lambda x}\psi](x) =k(\lambda) \varphi, \\
			\relax & L^2_\lambda[\varphi, \psi](x) := e^{\lambda x} L^2[e^{-\lambda x}\varphi, e^{-\lambda x}\psi](x)= k(\lambda)\psi ,
		\end{system}
	\end{equation}
	or, equivalently, 
	\begin{equation}\label{eq:lambda-periodic-principal-eigen2}\tag{\ref{eq:lambda-periodic-principal-eigen}$'$}
		\begin{system}
			\relax & \big(\sigma(x) \varphi_{x}\big)_x-2\lambda \sigma(x) \varphi_x+\big(\lambda^2\sigma(x)-\lambda\sigma_x(x)+r_u(x)\big)\varphi+\mu_v(x)\psi-\mu_u(x)\varphi=k(\lambda) \varphi, \\
			\relax & \big(\sigma(x) \psi_{x}\big)_x-2\lambda \sigma(x)\psi_x+\big(\lambda^2\sigma(x)-\lambda \sigma_x(x)+r_v(x)\big)\psi+\mu_u(x)\varphi-\mu_v(x)\psi = k(\lambda)\psi .
		\end{system}
	\end{equation}
	We call $k(\lambda)$ the \textit{$\lambda$-periodic principal eigenvalue} and $(\varphi, \psi)$ a \textit{$\lambda$-periodic principal eigenvector}.
\end{definition}
Again, it follows from the Krein-Rutman Theorem that $k(\lambda)$ is unique and that $(\varphi, \psi)$ is unique up to multiplication by a positive scalar. 
We use the notation $k(\lambda)$ to emphasize that this eigenvalue should be considered as a function of the parameter $\lambda$. Note that $\lambda_1^{per} = k(0)$.

The $\lambda$-periodic principal eigenpair plays an important role in the analysis of front behaviors at the leading edge for the following reasons: At the leading edge, where the solution is very small, system \eqref{eq:main-sys} is well approximated by the linearized system \eqref{eq:linearized}, and the function pair 
\[
u(t,x):=\alpha e^{-\lambda(x-ct)}\varphi(x)>0, \quad  v(t,x):=\alpha e^{-\lambda(x-ct)}\psi(x)>0, 
\]
where $\alpha$ is a positive constant, satisfies \eqref{eq:linearized} if and only $(\varphi,\psi)$ is  a $\lambda$-periodic principal eigenvector and $c=k(\lambda)/\lambda$.
\medskip

Lastly, we define our notion of Dirichlet principal eigenvalue. 
\begin{definition}[Dirichlet principal eigenpair]\label{def:Dirichlet-principal-eigenpair}
	Let $R>0$ be given. By a \textit{Dirichlet principal eigenpair on $(-R, R)$} associated with \eqref{eq:linearized} we mean any pair $\big(\lambda_1^R, (\varphi(x), \psi(x))\big)$ where $\lambda_1^R\in\R $, $\varphi(x)$ and $\psi(x) $ are positive  smooth functions on $[-R, R]$ that satisfy
	\begin{subequations}\label{eq:Dirichlet-principal-eigen}
	\begin{equation}
		\begin{system}
			\relax &\big(\sigma(x) \varphi_{x}\big)_x+r_u(x)\varphi+\mu_v(x)\psi-\mu_u(x)\varphi=\lambda_1^{R} \varphi, \\
			\relax &\big(\sigma(x) \psi_{x}\big)_x+r_v(x)\psi+\mu_u(x)\varphi-\mu_v(x)\psi=\lambda_1^{R}\psi ,
		\end{system}
	\end{equation}
	and
		\begin{equation}
			\varphi(-R)= \psi(-R)=0 \text{ and } \varphi(R)= \psi(R)=0.
		\end{equation}
	\end{subequations}
	We call $\lambda_1^{R}$ the \textit{principal eigenvalue} and $(\varphi, \psi)$ a \textit{principal eigenvector}.
\end{definition}
As before, the Krein-Rutman theorem ensures that $\lambda_1^R$ is unique and $(\varphi, \psi)$ is unique up to multiplication by a positive scalar. Heuristically, $\lambda_1^R$ corresponds to the rate of growth of a small population that vanishes at $x=-R$ and $x=R$. \medskip

We are now in a position to state our results on the properties of these different notions of principal eigenvalue and their relations. First we establish properties of the $\lambda$-periodic principal eigenvalue and the map $\lambda\mapsto k(\lambda)$.

\begin{prop}[Properties of $k(\lambda)$]\label{prop:k(lambda)}
	Let Assumption \ref{as:coop-comp} hold true.  Then:
	\begin{enumerate}[label={\rm(\roman*)}]
		\item \label{item:eigenpairk(lambda)}
		    For each $\lambda\in\R$, there exists a $\lambda$-periodic principal eigenpair $\big(k(\lambda),(\varphi, \psi)\big)$ with $\varphi(x)> 0$ and $\psi(x)>0$ for all $x\in\R $, which solves \eqref{eq:lambda-periodic-principal-eigen}, and $(\varphi, \psi)$ is unique up to the multiplication by a positive scalar.
		\item \label{item:minimaxk(lambda)}
			The following characterization of $k(\lambda)$ is valid:
			\begin{equation}\label{eq:minimax-k(lambda)}
				k(\lambda)=
				\underset{(\varphi, \psi)\in C^2_{per}(\R)^2}{\underset{{{\varphi}>{0}, \psi>0}}{\min}}\underset{x\in \R}{\sup}\, {\max}\left(\frac{L^1_\lambda[\varphi, \psi](x)}{\varphi(x)},\frac{L^2_\lambda[\varphi, \psi](x)}{\psi(x)}\right), 
			\end{equation}
			where $L^1_\lambda[\varphi, \psi](x)$, $L^2_\lambda[\varphi, \psi](x)$ are as defined in \eqref{eq:lambda-periodic-principal-eigen}. In addition,  the right-hand side has a unique minimizer up to multiplication by a positive scalar, which coincides with the principal eigenvector of the problem \eqref{eq:lambda-periodic-principal-eigen}.			
		\item \label{item:k(lambda)concave}
			The function $\lambda\mapsto k(\lambda) $ is analytic and strictly convex. Furthermore, 
			the following inequalites hold:
			\begin{equation}\label{k-quadratic}
				\sigma_{\min} \lambda^2 + r_{\min}\leq k(\lambda)\leq \sigma_{\max} \lambda^2+r_{\max}
				\quad \hbox{for all}\ \  \lambda\in \R, 
			\end{equation}
			where $\sigma_{\min}:=\min_{x\in\R}\sigma(x)$, $\sigma_{\max}:=\max_{x\in\R}\sigma(x)$ and
			\[
				r_{\min}:= \min\big(\min_{x\in\R} r_u(x), \min_{x\in\R} r_v(x)\big), \quad
				r_{\max}:= \max\big(\max_{x\in\R} r_u(x), \max_{x\in\R} r_v(x)\big).
				\]
	\end{enumerate}
\end{prop}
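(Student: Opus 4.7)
My plan is to establish the three parts in sequence, exploiting the cooperative structure of $L_\lambda$ in \eqref{eq:lambda-periodic-principal-eigen2}: since the off-diagonal couplings $\mu_v,\mu_u$ are strictly positive by Assumption~\ref{as:coop-comp}, $L_\lambda$ acts as a cooperative elliptic system on the compact one-dimensional torus $\R/L\mathbb{Z}$.

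For (i), I would apply the Krein--Rutman theorem on $X:=C_{\mathrm{per}}(\R/L\mathbb{Z};\R^2)$. For $\eta>0$ large, the resolvent $(\eta-L_\lambda)^{-1}\colon X\to X$ is compact (by elliptic regularity on a compact one-dimensional domain) and strongly positive (by the strong maximum principle and Harnack inequality for cooperative elliptic systems). Krein--Rutman then yields a unique real simple principal eigenvalue $k(\lambda)$ with a positive eigenvector $(\varphi,\psi)\in C^2_{\mathrm{per}}(\R)^2$, unique up to a positive scalar. For (ii), the min--max characterization is a Collatz--Wielandt-type formula: given a positive periodic test pair $(\tilde\varphi,\tilde\psi)$, setting $\mu:=\sup_x\max\bigl(L^1_\lambda[\tilde\varphi,\tilde\psi]/\tilde\varphi,\,L^2_\lambda[\tilde\varphi,\tilde\psi]/\tilde\psi\bigr)$ makes $(\tilde\varphi,\tilde\psi)$ a positive stationary super-solution of the cooperative parabolic problem $U_t=L_\lambda U-\mu U$, so the cooperative parabolic comparison principle applied to $(\tilde\varphi,\tilde\psi)$ and a rescaled copy of the principal eigenvector forces $\mu\geq k(\lambda)$ on letting $t\to+\infty$. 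The infimum is attained at $(\varphi,\psi)$ itself, and uniqueness up to scalar follows by pairing the componentwise inequalities $L^i_\lambda[\tilde\varphi,\tilde\psi]/(\text{component})_i\leq k(\lambda)$ with the positive adjoint principal eigenvector to force pointwise equality, reducing to (i).

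For (iii), analyticity of $\lambda\mapsto k(\lambda)$ follows from Kato's analytic perturbation theorem applied to the polynomial (degree $2$) family $\lambda\mapsto L_\lambda$, since $k(\lambda)$ is a simple isolated eigenvalue. Strict convexity I would prove via multiplicative interpolation: for $\lambda_0\neq\lambda_1$, $\theta\in(0,1)$, $\lambda_\theta:=(1-\theta)\lambda_0+\theta\lambda_1$, and principal eigenvectors $(\varphi_i,\psi_i)$ at $\lambda_i$, I test the min--max formula of (ii) against $\varphi_\theta:=\varphi_0^{1-\theta}\varphi_1^\theta$, $\psi_\theta:=\psi_0^{1-\theta}\psi_1^\theta$. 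Writing $\Phi_i:=\ln\varphi_i$, a direct computation using the convexity of $p\mapsto\sigma(x)p^2$ combined with the AM--GM inequality $(\psi_0/\varphi_0)^{1-\theta}(\psi_1/\varphi_1)^\theta\leq(1-\theta)\psi_0/\varphi_0+\theta\psi_1/\varphi_1$ for the $\mu_v$-term yields the pointwise bound
\[
\frac{L^1_{\lambda_\theta}[\varphi_\theta,\psi_\theta]}{\varphi_\theta}\leq(1-\theta)k(\lambda_0)+\theta k(\lambda_1)-\theta(1-\theta)\sigma(x)\bigl(\Phi_{0,x}-\Phi_{1,x}-(\lambda_0-\lambda_1)\bigr)^2,
\]
and an analogous one for $L^2$; plugging into (ii) gives convexity. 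Strict inequality for $\lambda_0\neq\lambda_1$ follows from uniqueness of the minimizer: equality would force $\Phi_{0,x}-\Phi_{1,x}\equiv\lambda_0-\lambda_1$, contradicting the periodicity constraint $\int_0^L(\Phi_{0,x}-\Phi_{1,x})\,dx=0$. For the quadratic bounds \eqref{k-quadratic}, I would multiply each eigenvalue equation by its corresponding component, integrate over $[0,L]$, and sum; after integration by parts (in which the $\sigma_x$ drift terms cancel) one obtains a Rayleigh-type identity from which the upper bound $k(\lambda)\leq\sigma_{\max}\lambda^2+r_{\max}$ follows by discarding the nonpositive gradient contribution and majorizing $\sigma\leq\sigma_{\max}$, $r_i\leq r_{\max}$, with the mutation cross-terms controlled via AM--GM. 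The lower bound follows symmetrically by dividing each equation by the respective component, integrating, and applying Jensen's inequality to $(\Phi_x-\lambda)^2$ together with $\int_0^L\Phi_x\,dx=0$ to obtain $\int_0^L(\Phi_x-\lambda)^2\,dx\geq L\lambda^2$.

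The main obstacle is the strict convexity step: the multiplicative interpolation technique is classical for scalar equations (cf.\ \cite{Nad-09}), but the coupling through $\mu_u,\mu_v$ requires delicate use of AM--GM on the off-diagonal ratios $\psi/\varphi$ and $\varphi/\psi$, and the equality case must be excluded by combining the uniqueness from (ii) with the zero-mean property of periodic logarithmic derivatives.
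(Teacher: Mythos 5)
Parts (i) and the analyticity claim in (iii) follow the paper exactly (Krein--Rutman; Kato's holomorphic family of type (A)). Your argument for (ii) is a legitimate variant: where the paper uses an elliptic sliding argument (taking the largest $\mu^*$ with $\mu^*\varphi_\lambda\leq\varphi$, $\mu^*\psi_\lambda\leq\psi$ and invoking the strong maximum principle at a tangency point), you instead run a parabolic comparison of the test pair against a growing multiple of the principal eigenvector; both are standard Collatz--Wielandt proofs and both work. Your convexity argument via multiplicative interpolation is also a valid alternative to the paper, which simply cites Nadin. Your route to \emph{strict} convexity is genuinely different: the paper deduces it from analyticity plus the quadratic bounds (an analytic convex function satisfying those bounds cannot be affine), whereas you use the equality case of the interpolation, the uniqueness of the minimizer from (ii), and the zero-mean property of periodic logarithmic derivatives. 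Note, though, that your version makes (iii) logically dependent on the uniqueness statement in (ii), so that uniqueness must be proven in full first.

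There is, however, a real gap in your proof of the quadratic bounds \eqref{k-quadratic}. You propose to multiply (resp.\ divide) each of the two equations in \eqref{eq:lambda-periodic-principal-eigen2} by its own component and then sum. For the upper bound this produces the mutation cross-term
\[
\int_0^L\Bigl[(\mu_u+\mu_v)\varphi\psi-\mu_u\varphi^2-\mu_v\psi^2\Bigr]\,\dd x=\int_0^L(\psi-\varphi)\bigl(\mu_u\varphi-\mu_v\psi\bigr)\,\dd x,
\]
and for the lower bound the analogous term
\[
\int_0^L\Bigl[\mu_v\tfrac{\psi}{\varphi}+\mu_u\tfrac{\varphi}{\psi}-\mu_u-\mu_v\Bigr]\,\dd x=\int_0^L\frac{(\varphi-\psi)(\mu_u\varphi-\mu_v\psi)}{\varphi\psi}\,\dd x.
\]
When $\mu_u(x)\not\equiv\mu_v(x)$ these integrands have no fixed sign pointwise (e.g.\ set $t=\varphi/\psi$: the first is $\psi^2\bigl(\mu_v+(\mu_u+\mu_v)t-\mu_u t^2\bigr)$, positive for $t$ strictly between $1$ and $\mu_v/\mu_u$), and AM--GM gives a bound in the \emph{wrong} direction, so you cannot discard them. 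You cannot fix this with constant weights either, since the optimal weight ratio would have to equal $\mu_u(x)/\mu_v(x)$ which varies in $x$. The paper circumvents this entirely by first adding the two equations \emph{before} testing, so that the mutation terms cancel identically ($\mu_v\psi-\mu_u\varphi+\mu_u\varphi-\mu_v\psi\equiv0$), yielding a single scalar equation for $w=\varphi+\psi$ in which only $r_u\varphi+r_v\psi$ remains and is bounded above and below by $r_{\max}w$ and $r_{\min}w$. You should replace your step with this summation trick; the rest of your integration-by-parts and Jensen argument then goes through.
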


Next we recall some classical properties of the principal eigenvalue for the Dirichlet problem.
\begin{prop}[On the  Dirichlet principal eigenvalue for cooperative systems]\label{prop:gen-princ-eig}
	Let Assumption \ref{as:coop-comp} hold true. 	
	Then: for any $R\in (0, +\infty)$, there exists a principal eigenvector $(\varphi, \psi)$ associated with $\lambda_1^R$, which is unique up to the multiplication by a positive scalar. Moreover, the mapping $R\mapsto \lambda_1^R$ is {strictly increasing}.
\end{prop}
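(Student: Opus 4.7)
The plan is to handle the existence/uniqueness part and the strict monotonicity separately. For the Dirichlet principal eigenpair itself I would view the left-hand side of \eqref{eq:Dirichlet-principal-eigen} as a cooperative elliptic operator $\mathcal{L}(\varphi,\psi):=(L^1[\varphi,\psi], L^2[\varphi,\psi])$ on $C_0([-R,R])^2$ (or a natural $L^p$ space) with Dirichlet boundary conditions. Shifted by a large constant $K$, the resolvent $(K-\mathcal{L})^{-1}$ is compact, and because the off-diagonal couplings $\mu_u,\mu_v$ are pointwise strictly positive on $[-R,R]$ it maps the nontrivial cone of componentwise nonnegative pairs strictly into itself after finite iteration. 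The Krein-Rutman theorem then delivers a simple real principal eigenvalue $\lambda_1^R$ with a componentwise positive eigenvector $(\varphi,\psi)$, unique up to a positive scalar, which is exactly the classical fact invoked in the statement.

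For the strict monotonicity, take $0<R_1<R_2$ and let $(\varphi_i,\psi_i)$ denote the positive principal eigenpair on $(-R_i,R_i)$ with eigenvalue $\lambda_i:=\lambda_1^{R_i}$. On the smaller interval $[-R_1,R_1]$ the pair $(\varphi_2,\psi_2)$ is strictly positive even at $x=\pm R_1$, whereas $(\varphi_1,\psi_1)$ is positive on $(-R_1,R_1)$ and vanishes at $\pm R_1$; hence the ratios $\varphi_2/\varphi_1$ and $\psi_2/\psi_1$ diverge to $+\infty$ at $\pm R_1$, so
\[
\tau:=\min\Bigl\{\inf_{[-R_1,R_1]}\tfrac{\varphi_2}{\varphi_1},\ \inf_{[-R_1,R_1]}\tfrac{\psi_2}{\psi_1}\Bigr\}\in(0,+\infty)
\]
is attained at some interior point $x_0\in(-R_1,R_1)$. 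Set $W:=\varphi_2-\tau\varphi_1\geq 0$ and $Z:=\psi_2-\tau\psi_1\geq 0$ on $[-R_1,R_1]$, and assume without loss of generality that the minimum is achieved on the $\varphi$-side, so $W(x_0)=0$. Linearity gives $\mathcal{L}(W,Z)=\lambda_2(W,Z)+\tau(\lambda_2-\lambda_1)(\varphi_1,\psi_1)$; reading off the first component at the interior minimum $x_0$ of $W$, where $W_x(x_0)=0$ and $W_{xx}(x_0)\geq 0$, yields
\[
\sigma(x_0)W_{xx}(x_0)+\mu_v(x_0)Z(x_0)=\tau(\lambda_2-\lambda_1)\varphi_1(x_0).
\]
Since the left-hand side is nonnegative and $\varphi_1(x_0)>0$, it follows that $\lambda_2\geq\lambda_1$.

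To upgrade to a strict inequality, suppose $\lambda_1=\lambda_2$. Then $(W,Z)\geq 0$ solves the same cooperative eigenvalue system on $(-R_1,R_1)$ and vanishes at $x_0$, so the strong maximum principle for cooperative irreducible elliptic systems (applicable because $\mu_u,\mu_v>0$ everywhere) forces $(W,Z)\equiv(0,0)$, contradicting $W(-R_1)=\varphi_2(-R_1)>0$. Hence $\lambda_1^{R_1}<\lambda_1^{R_2}$. The principal obstacle lies in this last step: one has to invoke the strong maximum principle in a genuine \emph{system} setting and verify that the irreducibility afforded by Assumption \ref{as:coop-comp} (strict positivity of $\mu_u$ and $\mu_v$) is exactly what is needed. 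The Krein-Rutman step and the weak inequality $\lambda_2\geq\lambda_1$ are essentially routine once the correct functional setting is in place.
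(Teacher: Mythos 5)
Your proof is correct, but there is no argument in the paper to compare it against: Proposition \ref{prop:gen-princ-eig} is introduced with ``we recall some classical properties of the principal eigenvalue for the Dirichlet problem,'' only the Krein--Rutman theorem is invoked for the uniqueness part, and no proof is given in Section \ref{ss:principal}. Your write-up therefore supplies the missing argument, and it is the standard one: Krein--Rutman applied to the shifted resolvent of the cooperative Dirichlet operator yields the principal eigenpair, and strict monotonicity in $R$ follows by the touching argument at the optimal $\tau$ together with a strong maximum principle.

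Two small remarks. First, the infima defining $\tau$ should be taken over the open interval $(-R_1,R_1)$, where the quotients $\varphi_2/\varphi_1$ and $\psi_2/\psi_1$ are finite; the blow-up of these ratios as $x\to\pm R_1$ is exactly what forces the infimum to be attained at an interior point, which is what your argument needs. Second, in the equality case you do not actually need the full strength of a system-level strong maximum principle: once $\lambda_1=\lambda_2=:\lambda$, the component at which the minimum is attained, say $W$, satisfies the scalar inequality $(\sigma W_x)_x+(r_u-\mu_u-\lambda)W=-\mu_v Z\leq 0$ with $W\geq 0$ and $W(x_0)=0$ at an interior point, so the scalar strong maximum principle already gives $W\equiv 0$; the equation then forces $\mu_v Z\equiv 0$, hence $Z\equiv 0$ since $\mu_v>0$, and the contradiction at $x=\pm R_1$ follows. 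That said, invoking the strong maximum principle for fully coupled cooperative systems is equally valid and is consistent with how the paper proceeds elsewhere (e.g.\ via \cite[Proposition 12.1]{Bus-Sir-04} in the proof of Theorem \ref{thm:comp-dir-per}).
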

The following theorem states that the minimum of the function $k(\lambda)$ is exactly given by the supremum of all $\lambda_1^R$ for $R>0$.

\begin{theorem}[Comparison between Dirichlet and $\lambda$-periodic principal eigenvalues]\label{thm:comp-dir-per}
	Let Assumption \ref{as:coop-comp} hold true. 
	Then $\lambda_1^R<k(\lambda)$ for all $R>0$ and $\lambda\in\R $. Furthermore,
			\begin{equation}\label{lambdaR-k}
				\lim_{R\to+\infty}\lambda_1^R=\min_{\lambda\in\R}k(\lambda).
			\end{equation}
\end{theorem}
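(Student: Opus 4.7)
The plan is to establish the two assertions separately: first the strict inequality $\lambda_1^R < k(\lambda)$ by a sliding/comparison argument, then the limit formula by compactness.

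For the strict inequality, fix $\lambda\in\R$ and $R>0$, and let $(\varphi,\psi)$ be the positive $\lambda$-periodic principal eigenvector provided by Proposition~\ref{prop:k(lambda)}(i). The pair $(\tilde\varphi,\tilde\psi) := (e^{-\lambda x}\varphi,\, e^{-\lambda x}\psi)$ is then a positive classical entire solution of the cooperative linearized system, satisfying $L^1[\tilde\varphi,\tilde\psi]=k(\lambda)\tilde\varphi$ and $L^2[\tilde\varphi,\tilde\psi]=k(\lambda)\tilde\psi$ on $\R$. I would slide it against the Dirichlet principal eigenpair $(\phi_R,\psi_R)$ from Proposition~\ref{prop:gen-princ-eig}: set
\[
\alpha^* := \sup\{\alpha>0 \,:\, \alpha\phi_R \le \tilde\varphi \text{ and } \alpha\psi_R \le \tilde\psi \text{ on } [-R,R]\}.
\]
Since $\phi_R,\psi_R$ vanish at $\pm R$ while $\tilde\varphi,\tilde\psi$ are strictly positive there, $\alpha^*$ is finite and positive, and contact occurs at some interior point $x_0\in(-R,R)$. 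Writing $(W_1,W_2) := (\tilde\varphi - \alpha^*\phi_R,\, \tilde\psi - \alpha^*\psi_R)\ge 0$, a direct computation gives
\[
L^i[W_1,W_2] - k(\lambda) W_i = (k(\lambda)-\lambda_1^R)\alpha^*\cdot (\phi_R,\psi_R)_i, \quad i=1,2.
\]
If one had $\lambda_1^R\ge k(\lambda)$, the right-hand sides would be nonpositive and $(W_1,W_2)$ would be a nonnegative solution of a cooperative elliptic system vanishing at an interior point. The strong maximum principle for cooperative systems then forces $W_1\equiv W_2\equiv 0$ on $[-R,R]$, contradicting $W_i(\pm R) = \tilde\varphi(\pm R),\tilde\psi(\pm R)>0$. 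Hence $\lambda_1^R<k(\lambda)$.

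For the limit, the strict monotonicity of $R\mapsto\lambda_1^R$ from Proposition~\ref{prop:gen-princ-eig} ensures the existence of $\ell := \lim_{R\to+\infty}\lambda_1^R\in\R\cup\{+\infty\}$, and the strict inequality combined with the coercivity of $k$ from Proposition~\ref{prop:k(lambda)}(iii) (which guarantees that $k$ attains its minimum) gives $\ell\le k^*:=\min_{\lambda\in\R}k(\lambda)$. To prove the reverse inequality $\ell\ge k^*$, I would proceed by compactness: normalize $(\phi_R,\psi_R)$ so that $\phi_R(x_R)+\psi_R(x_R) = \|\phi_R+\psi_R\|_\infty = 1$ for some $x_R\in(-R,R)$, translate by $x_R$, and use the $L$-periodicity of the coefficients to reduce (up to a subsequence in $x_R\bmod L$) to a converging family of operators. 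Interior Schauder estimates and the Harnack inequality for cooperative systems provide a nontrivial subsequential limit $(\Phi,\Psi)$ satisfying $\tilde L[\Phi,\Psi] = \ell(\Phi,\Psi)$, where $\tilde L$ is an $L$-translate of $L$ and therefore shares the same eigenvalue functions $k(\cdot)$. When $x_R/R$ remains bounded away from $\pm 1$, this limit is defined on all of $\R$; the strong maximum principle makes it strictly positive, and a Liouville-type classification of positive entire solutions for cooperative $L$-periodic systems forces $(\Phi,\Psi) = e^{-\lambda x}(\varphi_\lambda,\psi_\lambda)$ for some $\lambda\in\R$, whence $\ell=k(\lambda)\ge k^*$.

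The main obstacle is the Liouville-type classification and the handling of the degenerate case $x_R/R\to\pm 1$, where the limiting profile would live on a half-line with a Dirichlet endpoint condition. The classification itself is expected to rest on applying Krein--Rutman to the period-$L$ monodromy operator on the cone of positive pairs, which yields the one-parameter family of positive exponential-periodic solutions $\{e^{-\lambda x}(\varphi_\lambda,\psi_\lambda)\}_{\lambda\in\R}$ and, via analyticity and the strict convexity of $k$, accounts for all positive entire solutions. The half-line degeneration can be treated by a further translation/compactness step that extends a positive half-line solution to an entire positive solution, reducing it to the previous case.
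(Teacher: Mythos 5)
Your argument for the strict inequality $\lambda_1^R<k(\lambda)$ is correct and is essentially what the paper does (there stated in one line): slide $\alpha(\phi_R,\psi_R)$ under the entire positive solution $(e^{-\lambda x}\varphi,e^{-\lambda x}\psi)$ of the linearized system and apply the strong maximum principle for cooperative systems at the contact point.

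For the limit formula, your outline has the right shape — pass to a compactness limit along $R\to\infty$ and show the limiting eigenvalue $\ell$ lies in the range of $k$ — but the two obstacles you flag are genuine gaps, and the paper resolves them by a different, Liouville-free device. First, the classification you invoke is false as stated: if $\ell>k^*:=\min_{\lambda\in\R}k(\lambda)$, strict convexity gives two roots $\lambda_1<\lambda_2$ of $k(\lambda)=\ell$, and any nontrivial nonnegative combination $c_1 e^{-\lambda_1 x}(\varphi_{\lambda_1},\psi_{\lambda_1})+c_2 e^{-\lambda_2 x}(\varphi_{\lambda_2},\psi_{\lambda_2})$ is a positive entire solution not of the form $e^{-\lambda x}(\varphi_\lambda,\psi_\lambda)$. (You do not actually need the full classification — it suffices that a positive entire solution forces $\ell$ into the range of $k$, which combined with $\ell\le k^*$ from the first part gives $\ell=k^*$ — but that weaker statement still needs a proof.) Second, normalizing at the sup point produces the half-line degeneracy you acknowledge, and the proposed fix of a further translation/compactness step is not obviously conclusive: the translation destroys the normalization and the renormalized limit can degenerate again. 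The paper avoids both issues by normalizing at the fixed point $0$, $\varphi^R(0)+\psi^R(0)=1$, so that the Harnack inequality for fully coupled cooperative systems yields a strictly positive locally uniform entire limit $(\varphi,\psi)$ on $\R$ with eigenvalue $\lambda_1^\infty$ — no degenerate case arises because translations are by multiples of $L$ only. It then considers the period-ratio functions $\tilde\varphi:=\varphi(\cdot+L)/\varphi$ and $\tilde\psi:=\psi(\cdot+L)/\psi$, which are bounded by Harnack; setting $m:=\sup\max(\tilde\varphi,\tilde\psi)$, translating by multiples of $L$ to where the supremum is asymptotically attained, and passing to a limit, one finds that $(\tilde\varphi^\infty-m,\tilde\psi^\infty-m)\le 0$ solves a cooperative elliptic system with no zeroth-order term and attains $0$ at an interior point, so the strong maximum principle forces $\tilde\varphi^\infty\equiv\tilde\psi^\infty\equiv m$. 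Hence $e^{-\lambda x}(\varphi^\infty,\psi^\infty)$ with $\lambda:=\tfrac1L\ln m$ is an $L$-periodic positive eigenvector and $\lambda_1^\infty=k(\lambda)$, which is exactly the conclusion you need without any classification of entire solutions.
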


From \eqref{lambdaR-k} we see that
\[
\lim_{R\to+\infty}\lambda_1^R=\min_{\lambda\in\R}k(\lambda)
\leq k(0)=\lambda_1^{per},
\]
but the equality does not necessarily hold in general. In the case of scalar equations, it is known that $k(-\lambda)=k(\lambda)$, which is a consequence of the Fredholm alternative since the operator $L_{-\lambda}[\varphi]:=e^{-\lambda x}L[e^{\lambda x}\varphi]$ is the adjoint operator of $L_{\lambda}[\varphi]:=e^{\lambda x}L[e^{-\lambda x}\varphi]$, provided that $L$ is self-adjoint (see also \cite[, Prop. 2.14]{Nad-09}). 
In such a case, the equality $\min_\lambda k(\lambda)=k(0)$ always holds since $k(\lambda)$ is even
and convex. As we see below, the same result holds for our system under additional symmetry assumptions.

\begin{proposition}\label{prop:symm-case}
	Suppose that Assumption \ref{as:coop-comp} holds true, and assume further that either: 
	\begin{enumerate}[label={\rm(\roman*)}]
	    \item \label{item:symmmatrix}
		$\mu_u(x)=\mu_v(x)$ for all $x\in\R $, or
	    \item \label{item:symmreflexion}
		all coefficients are even: $\sigma(x)=\sigma(-x)$, $r_u(x)=r_u(-x)$, $r_v(x)=r_v(-x)$, $\mu_u(x)=\mu_u(-x)$ and $r_v(x)=r_u(-x)$, for all $x\in\R $.
	\end{enumerate}
	Then the function $\lambda\mapsto k(\lambda)$ is even, \textit{i.e.} $k(\lambda)=k(-\lambda)$ for all $\lambda\in\R $. Consequently, we have
			\begin{equation}\label{k-even}
			    \lim_{R\to+\infty} \lambda_1^R= \min_{\lambda\in\R } k(\lambda) =k(0)
			    =\lambda_1^{per}.
			\end{equation}
\end{proposition}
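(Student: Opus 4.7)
The plan is to prove, under either assumption (i) or (ii), the symmetry $k(-\lambda) = k(\lambda)$ for all $\lambda\in\R$. Once this is established, \eqref{k-even} follows at once: by Proposition~\ref{prop:k(lambda)}(iii) the function $k$ is strictly convex, so an even strictly convex function is minimized exactly at $\lambda = 0$, yielding $\min_{\lambda\in\R} k(\lambda) = k(0) = \lambda_1^{per}$, and Theorem~\ref{thm:comp-dir-per} provides $\lim_{R\to\infty} \lambda_1^R = \min_{\lambda\in\R} k(\lambda)$.

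For case (i), when $\mu_u(x) = \mu_v(x) =: \mu(x)$, the reaction matrix
\[
\begin{pmatrix} r_u - \mu & \mu \\ \mu & r_v - \mu \end{pmatrix}
\]
is symmetric, and I would show that the operators $L_\lambda$ and $L_{-\lambda}$, viewed on pairs of $L$-periodic $C^2$ functions with the $L^2([0,L])^2$ inner product, are formal adjoints of one another. This is a routine double integration by parts on $[0,L]$, the boundary terms vanishing by periodicity: the drift $-2\lambda\sigma\partial_x$ dualizes to $+2\lambda(\sigma\,\cdot\,)_x = 2\lambda\sigma\partial_x + 2\lambda\sigma_x$, and the extra $+2\lambda\sigma_x$ combines with the zeroth-order contribution $-\lambda\sigma_x$ in the potential to produce $+\lambda\sigma_x$, which is exactly what appears in $L_{-\lambda}$. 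Symmetry of the coupling ensures the off-diagonal $\mu$-terms are preserved under adjunction. Since $L_\lambda$ and $L_\lambda^* = L_{-\lambda}$ have the same spectrum, and since the principal eigenvalue of each is, by the Krein--Rutman part of Proposition~\ref{prop:k(lambda)}(i), the unique real eigenvalue admitting a positive eigenvector (and is algebraically simple), we deduce $k(\lambda) = k(-\lambda)$.

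For case (ii), the argument is the change of variable $x \mapsto -x$. If $(\varphi,\psi)$ is a positive $L$-periodic principal eigenvector of \eqref{eq:lambda-periodic-principal-eigen2} at parameter $\lambda$, set $\tilde\varphi(y) := \varphi(-y)$, $\tilde\psi(y) := \psi(-y)$. Because $\sigma, r_u, r_v, \mu_u, \mu_v$ are even, the derivative $\sigma_x$ is odd, and one checks directly that
\[
(\sigma \tilde\varphi_y)_y(y) = (\sigma\varphi_x)_x(-y), \qquad \tilde\varphi_y(y) = -\varphi_x(-y).
\]
Substituting $x = -y$ in \eqref{eq:lambda-periodic-principal-eigen2} and collecting signs, every occurrence of $\lambda$ coming from the drift or the $-\lambda\sigma_x$ potential term flips to $-\lambda$ in the equation satisfied by $(\tilde\varphi,\tilde\psi)$, while the coefficients themselves and the eigenvalue $k(\lambda)$ are unchanged. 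Thus $(\tilde\varphi,\tilde\psi)$ is a positive $L$-periodic principal eigenvector of \eqref{eq:lambda-periodic-principal-eigen2} at parameter $-\lambda$ with eigenvalue $k(\lambda)$. By the uniqueness in Proposition~\ref{prop:k(lambda)}(i), $k(-\lambda) = k(\lambda)$.

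The main obstacle is the adjointness computation in case (i): several $\sigma_x$ contributions generated by differentiating $\sigma$ out of the drift must be tracked carefully and combined with the potential term $-\lambda\sigma_x$ so as to reproduce $+\lambda\sigma_x$; moreover one must argue not only that $L_\lambda$ and $L_\lambda^*$ have the same spectrum but that their \emph{principal} eigenvalues coincide, which one obtains by invoking the uniqueness and simplicity afforded by Krein--Rutman on both sides.
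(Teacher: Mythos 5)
Your proof is correct and follows essentially the same two-part strategy as the paper: the reflection $x\mapsto -x$ for case (ii), and the formal-adjoint argument (with $\mu_u=\mu_v$ making the coupling symmetric) for case (i), concluding via the coincidence of principal eigenvalues of an operator and its adjoint. The only difference is cosmetic: the paper summarizes the last step as ``by the Fredholm alternative,'' while you spell out the same fact via Krein--Rutman uniqueness and simplicity, and you include the (correct) bookkeeping of the $\sigma_x$ terms that the paper leaves implicit.
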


\subsection{Propagation dynamics}\label{ss:propagation}

Before presenting our main results in this section, we remark that nonnegative solutions of \eqref{eq:main-sys} are all bounded as $t\to+\infty$. To state this basic estimate, we introduce the following notation:
\begin{equation}\label{rmax-kappamin}
r_{\max}:=\sup_{x\in\R}\max\big(r_u(x), r_v(x)\big),\ \ 
\kappa_{\min}:=\inf_{x\in\R} \min\big(\kappa_u(x), \kappa_v(x)\big),\ \ 
\overline{K}:=\frac{r_{\max}}{\kappa_{\min}}.
\end{equation}

\begin{prop}[Basic boundedness estimate]\label{prop:uniform-bound}
Let $(u(t,x),v(t,x))$ be a solution of \eqref{eq:main-sys} with nonnegative bounded initial data $(u_0(x),v_0(x))$. Then 
$u(t,x)\geq 0$, $v(t,x)\geq 0$ for all $t\geq0, x\in\R$, and
\begin{equation}\label{u+v<max(K,u0+v0)}
u(t,x)+v(t,x)\leq \max\big(\overline{K},\, \sup_{x\in\R}(u_0(x)+v_0(x))\big) \ \ \hbox{for all}\ \ t\geq0, x\in\R,
\end{equation}  
\begin{equation}\label{uniform-bound}
\limsup_{t\to+\infty}\sup_{x\in\R}\big(u(t,x)+v(t,x)\big)\leq \overline{K}.
\end{equation}
In particular, if $u_0(x)+v_0(x)\leq\overline{K}\,(x\in\R)$, then $u(t,x)+v(t,x)\leq\overline{K}\,(t\geq 0,\,x\in\R)$.
\end{prop}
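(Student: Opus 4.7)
The plan is to treat the three claims in sequence: non-negativity first, then the uniform bound \eqref{u+v<max(K,u0+v0)}, and finally the asymptotic bound \eqref{uniform-bound}. For non-negativity, I exploit the quasi-monotone increasing structure of the right-hand side of \eqref{eq:main-sys}: rewriting the $u$-equation as
\[ u_t - \big(\sigma(x) u_x\big)_x - \big[r_u(x) - \mu_u(x) - \kappa_u(x)(u+v)\big] u \;=\; \mu_v(x)\, v, \]
we see that whenever $v\geq 0$ the scalar parabolic maximum principle forces $u\geq 0$, and symmetrically with the roles reversed. To run this rigorously for the coupled system on the unbounded domain $\R$, I would argue by contradiction: if $t_0>0$ is the first time at which either component fails to be non-negative, say $u(t_0,x_0)=0$ while $v\geq 0$ on $[0,t_0]$, the sign of $\mu_v v$ together with the strong maximum principle yields a contradiction. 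Growth at spatial infinity is harmless because the initial data and hence the solution are bounded on any finite time interval, which allows a standard weighted maximum principle.

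Next, set $S:=u+v$. A direct calculation shows that the mutation terms $\pm(\mu_v v - \mu_u u)$ cancel exactly when the two equations are summed, so
\[ S_t \;=\; \big(\sigma(x) S_x\big)_x + r_u(x) u + r_v(x) v - \big(\kappa_u(x) u + \kappa_v(x) v\big)\, S. \]
Since $u,v\geq 0$ by the first step, we have the pointwise estimates $r_u u + r_v v \leq r_{\max} S$ and $\kappa_u u + \kappa_v v \geq \kappa_{\min} S$, which give the scalar KPP-type differential inequality
\[ S_t - \big(\sigma(x) S_x\big)_x \;\leq\; \kappa_{\min}\, S\,(\overline{K} - S). \]

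Both remaining claims then follow by comparison with explicit super-solutions of the associated logistic equation. The constant $M:=\max\big(\overline{K},\, \sup_{x}(u_0+v_0)\big)$ trivially satisfies $M_t - (\sigma M_x)_x = 0 \geq \kappa_{\min} M(\overline{K}-M)$ because $M\geq \overline{K}$, so the scalar comparison principle applied to the sub-solution $S$ and the super-solution $M$ yields \eqref{u+v<max(K,u0+v0)}. For \eqref{uniform-bound}, I compare $S$ with the spatially constant solution $W(t)$ of the ODE $W'=\kappa_{\min} W(\overline{K}-W)$ with $W(0)=M$; this is again a super-solution of the logistic PDE, and classical ODE analysis gives $W(t)\downarrow \overline{K}$ as $t\to\infty$, whence $\limsup_{t\to\infty}\sup_x S(t,x)\leq \overline{K}$.

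The main obstacle is really the first step: although the quasi-monotone structure makes non-negativity morally obvious, applying a maximum principle to a coupled parabolic system on the whole line demands some care in handling the cross-coupling by $\mu_u,\mu_v$ and the behavior at infinity. Once this is secured, the rest is a routine scalar comparison argument, and the key structural observation — the exact cancellation of the mutation terms in the sum $u+v$ — is precisely what reduces the system-level problem to a scalar logistic inequality.
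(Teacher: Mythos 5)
Your argument is correct and follows the same route as the paper: sum the two equations, note the mutation terms cancel, bound $r_u u+r_v v\leq r_{\max}S$ and $\kappa_u u+\kappa_v v\geq\kappa_{\min}S$ to obtain the scalar logistic differential inequality, and compare with the ODE solution $W(t)$ started at $M:=\max(\overline K,\sup(u_0+v_0))$, whose monotone decay to $\overline K$ yields both \eqref{u+v<max(K,u0+v0)} and \eqref{uniform-bound}. The paper merely states that non-negativity ``follows easily from the maximum principle'' and omits the details; your sketch of the quasi-monotone/first-time-of-touching argument is the standard way to fill that in.
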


As we shall see, the above proposition follows by a rather simple comparison argument. Note that uniform boundedness guarantees that any nonnegative solution of \eqref{eq:main-sys} exists globally for $t\geq 0$.

Now we discuss the propagation dynamics of the solutions of \eqref{eq:main-sys}. 
We first focus on solutions with front-like initial data, then we consider solutions with compactly supported initial data. Since the propagation speed may differ depending on whether the front faces toward the right or toward the left, we distinguish the right and left spreading speeds.

\begin{definition}\label{def:front-like}
The pair of bounded nonnegative functions $(u_0,v_0)$ on $\R $ that appears in \eqref{main-initial} is called {\it right front-like} if there exist real numbers $K_1<K_2$ such that
\[
\inf_{x\leq K_1}\min(u_0(x), v_0(x))>0, \quad u_0(x)=v_0(x)= 0 \ \ \hbox{for all} \ \ x\geq K_2.
\]
It is called {\it left front-like} if there exist real numbers $K_1<K_2$ such that
\[
u_0(x)=v_0(x)= 0 \ \ \hbox{for all} \ \ x\leq K_1, \quad \inf_{x \geq K_2}\min(u_0(x), v_0(x))>0. 
\]
\end{definition}

\begin{thm}[Spreading speeds for front-like initial data]\label{thm:main-lindet}
	Let Assumption \ref{as:coop-comp} hold true and assume that $\lambda_1^{per}>0$. Then there exist real numbers $c^*_{R}$, $c^*_{L}$ and a positive number $\eta>0$ such that for any solution $(u,v)$ of \eqref{eq:main-sys}--\eqref{main-initial} whose initial data $(u_0,v_0)$ is right front-like, it holds that
\begin{equation}\label{right-spreading}
\begin{cases}
\displaystyle\ \liminf_{t\to\infty} \left[\inf_{x\leq ct}\min(u(t, x), v(t,x))\right] \geq \eta,
\ \ &\text{for all}\ \  c<c^*_{R},\\[9pt]
\displaystyle\ \limsup_{t\to\infty} \left[\sup_{x\geq ct}\max(u(t, x), v(t,x))\right] = 0, \ \ &\text{for all}\ \ c>c^*_{R},
\end{cases}
\end{equation}
while for any solution $(u,v)$ of \eqref{eq:main-sys}--\eqref{main-initial} whose initial data $(u_0,v_0)$ is left front-like, it holds that
\begin{equation}\label{left-spreading}
\begin{cases}
\displaystyle\ \liminf_{t\to\infty} \left[\inf_{x\geq -ct}\min(u(t, x), v(t,x))\right] \geq \eta,
\ \ &\text{for all}\ \  c<c^*_{L},\\[9pt]
\displaystyle\ \limsup_{t\to\infty} \left[\sup_{x\leq -ct}\max(u(t, x), v(t,x))\right] = 0, \ \ &\text{for all}\ \ c>c^*_{L}.
\end{cases}
\end{equation}
Furthermore, we have the following formula:
\begin{equation}\label{eq:speed}
		c^*_{R}=\inf_{\lambda>0} \frac{k(\lambda)}{\lambda}=\min_{\lambda>0} \frac{k(\lambda)}{\lambda}
		\quad\ \ 
		c^*_{L}=\inf_{\lambda<0} \frac{k(\lambda)}{-\lambda} =\min_{\lambda<0} \frac{k(\lambda)}{-\lambda}
		= \min_{\lambda>0} \dfrac{k(-\lambda)}{\lambda},
\end{equation}
where $k(\lambda)$ is the $\lambda$-principal periodic eigenvalue  defined in Definition~\ref{def:k(lambda)}.
\end{thm}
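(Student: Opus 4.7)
The bound on $c^*_R$ is proved via two separate estimates, and the bound on $c^*_L$ follows by applying the same argument to $(u(t,-x),v(t,-x))$, which turns $\lambda$ into $-\lambda$ in the eigenvalue formula and produces the last identity in \eqref{eq:speed}. The key structural observation is that the nonlinearity of \eqref{eq:main-sys} equals that of the cooperative linearization \eqref{eq:linearized} minus the nonnegative terms $\kappa_u(x)(u+v)u$ and $\kappa_v(x)(u+v)v$ (nonnegative in view of Proposition~\ref{prop:uniform-bound}). Hence solutions of \eqref{eq:linearized} dominate solutions of \eqref{eq:main-sys}; and near the leading edge, where the quadratic correction is negligible, they also bound them from below as long as one stays in the cooperative zone \eqref{cooperative-zone}.

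\textbf{Upper bound.} For each $\lambda>0$, using the $\lambda$-periodic principal eigenpair $(k(\lambda),(\varphi_\lambda,\psi_\lambda))$ of Proposition~\ref{prop:k(lambda)}\ref{item:eigenpairk(lambda)}, I would form the exponential
\[
(\bar u,\bar v)(t,x) := A\,e^{-\lambda(x-c_\lambda t)}\bigl(\varphi_\lambda(x),\psi_\lambda(x)\bigr),\qquad c_\lambda := k(\lambda)/\lambda,
\]
which solves \eqref{eq:linearized} identically and hence is a supersolution of \eqref{eq:main-sys}. Since the right-front-like datum is bounded and vanishes on $[K_2,+\infty)$, while $e^{-\lambda x}\min(\varphi_\lambda,\psi_\lambda)$ is bounded below on $(-\infty,K_2]$, the constant $A$ can be taken large enough that $(\bar u(0,\cdot),\bar v(0,\cdot))\geq(u_0,v_0)$ on $\R$; the cooperative comparison principle then gives $u\leq\bar u$, $v\leq\bar v$ for every $t\geq 0$. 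Taking $c>c^*_R$ and picking $\lambda>0$ with $c_\lambda<c$, one obtains $\sup_{x\geq ct}(\bar u+\bar v)\leq CAe^{-\lambda(c-c_\lambda)t}\to 0$, proving the second line of \eqref{right-spreading}.

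\textbf{Lower bound.} For $c<c^*_R$ the plan is to exhibit a compactly supported pair $(\underline u,\underline v)$ which, translated along $x=x_0+ct$, grows in time and saturates at a uniform positive height $\eta>0$. I would introduce a ``tilted'' Dirichlet principal eigenvalue $\Lambda(R,c)$ encoding the linear growth rate in a frame of speed $c$ (concretely, the Dirichlet principal eigenvalue on $(-R,R)$ of the operator from \eqref{eq:linearized} conjugated by $e^{\lambda(x-ct)}$ with an optimally chosen $\lambda$), and prove the moving-frame analogue of Theorem~\ref{thm:comp-dir-per},
\[
\lim_{R\to\infty}\Lambda(R,c)=\min_{\lambda\in\R}\bigl(k(\lambda)-c\lambda\bigr).
\]
The hypotheses $c<c^*_R$ and $k(0)=\lambda_1^{per}>0$ combined with the strict convexity of $k$ force this limit to be strictly positive, so $\Lambda(R,c)>0$ for $R$ large. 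The associated Dirichlet eigenvector, multiplied by a small $\delta>0$ and placed in the pulsating moving frame, is then a subsolution of \eqref{eq:main-sys} as long as its values stay in the cooperative zone \eqref{cooperative-zone}: the quadratic correction $\kappa_*(u+v)u=O(\delta^2)$ is absorbed by the positive margin $\Lambda(R,c)$, and a truncation-saturation argument caps the growth at some $\eta>0$. Since $\inf_{x\leq K_1}\min(u_0,v_0)>0$, placing copies of this subsolution in the moving frame and invoking comparison then yields the first line of \eqref{right-spreading}.

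\textbf{Main obstacle.} The most delicate step is the construction and validity of the subsolution: the $x$-dependence of $\sigma,r_*,\mu_*,\kappa_*$ forbids a naive inertial change of variable $\xi=x-ct$ (the coefficients retain their $x$-dependence in the moving frame), so the tilted eigenvalue problem has to be formulated on pulsating-type test functions; and because \eqref{eq:main-sys} is cooperative only on the restricted region \eqref{cooperative-zone}, the validity of the comparison must be maintained throughout the truncation-saturation. Once these estimates are in place, the identities in \eqref{eq:speed} themselves follow from the analyticity and strict convexity of $\lambda\mapsto k(\lambda)$ stated in Proposition~\ref{prop:k(lambda)}\ref{item:k(lambda)concave}, which force the infima to be attained at unique finite values.
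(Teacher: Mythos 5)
Your upper bound is essentially the paper's Lemma~\ref{lem:upper-spreading}: the pair $\alpha e^{-\lambda(x-c_\lambda t)}(\varphi_\lambda,\psi_\lambda)$ solves \eqref{eq:linearized} exactly, and by sublinearity of the nonlinearity it dominates solutions of \eqref{eq:main-sys}; this part is complete and matches the paper.

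The lower bound, however, has a genuine gap, and it sits precisely at the point you label ``the most delicate step'' and then wave at with a ``truncation-saturation argument.'' The difficulty is not primarily the moving-frame Dirichlet eigenvalue problem (which is technical but in principle tractable, and close in spirit to what the paper does for the hair-trigger effect, Theorem~\ref{thm:hairtrigger}). The real issue is that system \eqref{eq:main-sys} is cooperative only inside the zone \eqref{cooperative-zone}, so the comparison principle you want to invoke between a small sub-barrier and the \emph{true} solution $(u,v)$ does not apply naively: in the algebraic decomposition of the difference $U=u-\underline u$, $V=v-\underline v$, the off-diagonal coupling must be arranged to read $\mu_v(x)-\kappa_u(x)\underline u$ rather than $\mu_v(x)-\kappa_u(x)u$, and for this to remain nonnegative for all time one needs the barrier $(\underline u,\underline v)$ to stay uniformly bounded inside the cooperative zone. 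Moreover, the uniform positive constant $\eta$ in \eqref{right-spreading} comes from the barrier converging to a positive stationary state, which again needs a monotone (cooperative) structure for the barrier dynamics. You never explain how either of these is guaranteed. The paper's key device, which you omit entirely, is the auxiliary system \eqref{eq:auxiliary-below} with the extra quadratic penalties $-\beta\tilde u^2$, $-\beta\tilde v^2$: these force the invariant bound $\tilde u+\tilde v\leq K$ (see \eqref{u+v<K}), so the auxiliary solution lives in the cooperative zone forever, the difference system \eqref{UV-ineq} is genuinely cooperative, and the auxiliary system admits a unique positive $L$-periodic stationary solution $(p,q)$ whose lower bound supplies $\eta$. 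Lemma~\ref{lem:comparison-below} then transfers the spreading estimate from the auxiliary solution to the true one. Without the $\beta$-penalty (or an equivalent mechanism) your argument does not close: a sub-barrier that grows toward order one leaves the cooperative zone and loses the comparison structure you are relying on.

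A further, smaller, divergence: where you propose to prove spreading for the lower barrier by a moving-frame analogue of Theorem~\ref{thm:comp-dir-per}, the paper instead reformulates the (cooperative, truncated) auxiliary system \eqref{eq:auxiliary-below} as a scalar order-preserving recursion on the two-line habitat $\mathcal H=\R\times\{0,1\}$ and applies Weinberger's abstract framework \cite{Wei-02} to obtain \eqref{tilde-u-propagation} directly, thereby bypassing the pulsating tilted eigenvalue problem you flag as problematic.
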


\begin{definition}[Right- and left spreading speed]\label{def:spreading-speed}
The above quantities $c^*_{R}$ and $c^*_{L}$ are called the \textit{right spreading speed} and \textit{left spreading speed} of solutions to \eqref{eq:main-sys}, respectively.
\end{definition}

Note that the constant $\eta$ that appears in \eqref{right-spreading}, \eqref{left-spreading} does not depend on the choice of the initial data. The formula \eqref{eq:speed} is well-known  for scalar KPP type equations \cite{Xin-00, Wei-02, Ber-Ham-02, Ber-Ham-Roq-05-II, Nad-09}. 
Since we are assuming $\lambda_1^{per}\big(=k(0)\big)>0$, the values of $c^*_{R}$ and $c^*_{L}$ are well-defined and finite.
{Numerical simulations} 
{show} 
{that the} 
{propagating} 
{front of a solution starting from a front-like initial data converges} 
{quickly to a typical coherent shape that travels at a constant speed, which strongly suggests that these are the profiles of traveling waves (see Figure \ref{fig:TW}). }

\begin{figure}[H]
    \centering
    \includegraphics[bb = 0 0 118 131]{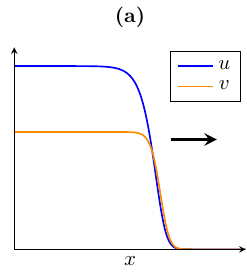} \hspace{1cm}
    \includegraphics[bb = 0 0 118 131]{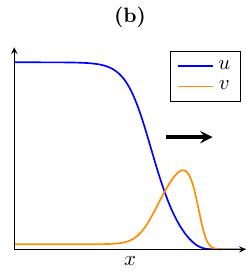}\hspace{1cm}
    \includegraphics[bb = 0 0 118 131]{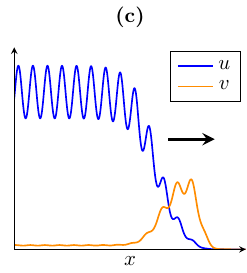}
    \caption{Profiles of propagating fronts of \eqref{eq:main-sys} for different parameter values. {\bf (a)} Spatially homogeneous coefficients with large mutation rates $\mu_u$, $\mu_v$. In this case, the cooperative zone of system \eqref{eq:main-sys} is rather large, and the propagating front lies entirely in this zone. As a result, both $u$ and $v$ have monotone profiles, just as in the case of scalar equations. {\bf (b)} Spatially homogeneous coefficients with small mutation rates $\mu_u$, $\mu_v$. In this case, a large part of the profile of the propagating front lies outside the cooperative zone, and a hump appears on $v$. {\bf (c)} Spatially periodic case. 
The coefficients are the same as in (b), except $r_u(x)$ and $r_v(x)$, which have a cosine-like periodic perturbation.}
    \label{fig:TW}
\end{figure}
By virtue of the inequalities \eqref{k-quadratic}, the following estimates of the spreading speeds hold:

\begin{prop}\label{prop:c*-estimate}
Let $\sigma_{\min}$, $\sigma_{\max}$, $r_{\min}$, $r_{\max}$ be the constants that appear in \eqref{k-quadratic}. Then 
\[
c^*_{R}\leq 2\sqrt{\sigma_{\max} r_{\max}}, \quad c^*_{L}\leq 2\sqrt{\sigma_{\max} r_{\max}}.
\]
Furthermore, if $r_{\min}> 0$, then 
\[
c^*_{R}\geq 2\sqrt{\sigma_{\min} r_{\min}}, \quad c^*_{L}\geq 2\sqrt{\sigma_{\min} r_{\min}}.
\]
\end{prop}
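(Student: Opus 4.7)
The proof is essentially a direct calculation built on top of the formula \eqref{eq:speed} and the quadratic sandwich \eqref{k-quadratic}, so I expect no genuine obstacle---it is a routine optimization.

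My plan is to treat the upper and lower bounds separately and handle $c^*_R$ and $c^*_L$ in parallel, since the two spreading speeds are both characterized as $\min$ over a half-line of the quotient $k(\cdot)/|\cdot|$ by formula \eqref{eq:speed}. For the upper bound, I will use the right inequality in \eqref{k-quadratic}, namely $k(\lambda)\leq \sigma_{\max}\lambda^2+r_{\max}$ for every $\lambda\in\R$, to write
\[
c^*_R=\min_{\lambda>0}\frac{k(\lambda)}{\lambda}\leq \min_{\lambda>0}\frac{\sigma_{\max}\lambda^2+r_{\max}}{\lambda},
\]
and a similar estimate for $c^*_L$ using $c^*_L=\min_{\lambda>0}k(-\lambda)/\lambda$ and the fact that \eqref{k-quadratic} holds at $-\lambda$ as well. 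The inner minimization is the classical one: note first that $r_{\max}\geq k(0)=\lambda_1^{per}>0$ by \eqref{k-quadratic} and our standing assumption $\lambda_1^{per}>0$, so that the function $\lambda\mapsto \sigma_{\max}\lambda+r_{\max}/\lambda$ is well-defined on $(0,+\infty)$. Differentiating gives a critical point at $\lambda_\star=\sqrt{r_{\max}/\sigma_{\max}}$, at which the quotient equals $2\sqrt{\sigma_{\max}r_{\max}}$, yielding the desired upper bound.

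For the lower bound in the case $r_{\min}>0$, I will use the left inequality in \eqref{k-quadratic}, which gives
\[
c^*_R=\min_{\lambda>0}\frac{k(\lambda)}{\lambda}\geq \min_{\lambda>0}\frac{\sigma_{\min}\lambda^2+r_{\min}}{\lambda}=2\sqrt{\sigma_{\min}r_{\min}},
\]
where the minimizer is now $\lambda=\sqrt{r_{\min}/\sigma_{\min}}$ (well-defined because $r_{\min}>0$). The same computation with $-\lambda$ in place of $\lambda$ gives the corresponding bound for $c^*_L$.

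The only item that requires a small comment is the need to justify that the infimum in $\min_{\lambda>0}k(\lambda)/\lambda$ is actually attained and finite; but this is already built into the formula \eqref{eq:speed} of Theorem \ref{thm:main-lindet}, so I can simply cite it. Everything else reduces to the elementary inequality $a\lambda+b/\lambda\geq 2\sqrt{ab}$ for $a,b>0$, with equality at $\lambda=\sqrt{b/a}$, applied twice.
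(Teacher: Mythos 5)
Your proof is correct and follows essentially the same route as the paper: apply the quadratic bounds \eqref{k-quadratic} inside the variational formula \eqref{eq:speed} and minimize $a\lambda+b/\lambda$ over $\lambda>0$. The only cosmetic difference is in the lower bound, where the paper introduces the minimizer $\lambda_0$ of $k(\lambda)/\lambda$ explicitly before comparing, while you invoke directly that a pointwise inequality between two functions on $(0,\infty)$ yields the same inequality between their infima; both are equivalent, and your remark that $r_{\max}\geq k(0)=\lambda_1^{per}>0$ is a reasonable sanity check (the paper leaves it implicit).
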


Let us explain the meaning of the formula \eqref{eq:speed} from a different point of view. 
As we mentioned in section~\ref{ss:eigenvalue}, the pair of positive functions of the form
\begin{equation}\label{uv-pair}
u(t,x):=\alpha e^{-\lambda(x-ct)}\varphi(x), \quad  v(t,x):=\alpha e^{-\lambda(x-ct)}\psi(x) ,
\end{equation}
satisfies the linear system \eqref{eq:linearized} if and only if $(\varphi,\psi)$ is a $\lambda$-periodic principal eigenvector of \eqref{eq:lambda-periodic-principal-eigen} and $c=k(\lambda)/\lambda$. Therefore, $c^*_{R}$ can be characterized as follows:
\begin{equation}\label{c*-right}
c^*_{R}=\min \left\{c\in\R \,:\, \hbox{the pair}\ (u,v) \ \hbox{in \eqref{uv-pair} satisfies  \eqref{eq:linearized} for some}\ \ \lambda>0\right\}. 
\end{equation}
As regards the left spreading speed, we consider a pair of functions the form
\begin{equation}\label{uv-pair2}
u(t,x):=\alpha e^{-\lambda(x+ct)}\varphi(x), \quad  v(t,x):=\alpha e^{-\lambda(x+ct)}\psi(x) 
\end{equation}
with $\lambda<0$, since we deal with a front that faces the negative direction of $x$-axis. Then we have
\begin{equation}\label{c*-left}
c^*_{L}=\min \left\{c\in\R \,:\, \hbox{the pair}\ (u,v) \ \hbox{in \eqref{uv-pair2} satisfies \eqref{eq:linearized} for some}\ \ \lambda<0\right\}. 
\end{equation}

Incidentally, combining \eqref{eq:speed} and Proposition~\ref{prop:symm-case}, we obtain the following proposition:

\begin{prop}\label{prop:right-left-speeds}
Let the assumption (i) or (ii) of Proposition~\ref{prop:symm-case} hold. Then $c^*_{R}=c^*_{L}$. In particular, if all the coefficients are spatially homogeneous, then $c^*_{R}=c^*_{L}$. 
\end{prop}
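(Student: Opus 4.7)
The plan is to derive Proposition~\ref{prop:right-left-speeds} directly from Proposition~\ref{prop:symm-case} combined with the closed-form expressions for $c^*_R$ and $c^*_L$ provided by the formula \eqref{eq:speed} in Theorem~\ref{thm:main-lindet}. All the genuine work has already been done in those two statements, so what remains is essentially a change-of-variable manipulation.

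First, I would observe that under either of the hypotheses (i) or (ii), Proposition~\ref{prop:symm-case} guarantees that the map $\lambda\mapsto k(\lambda)$ is even, that is, $k(\lambda)=k(-\lambda)$ for every $\lambda\in\R$. On the other hand, the formula \eqref{eq:speed} gives the two speeds explicitly as
\[
c^*_R=\min_{\lambda>0}\frac{k(\lambda)}{\lambda},\qquad c^*_L=\min_{\lambda>0}\frac{k(-\lambda)}{\lambda}.
\]
Replacing $k(-\lambda)$ by $k(\lambda)$ in the right-hand expression makes the two formulas identical, so that $c^*_R=c^*_L$ follows immediately. Note that this step uses the fact (also guaranteed by Theorem~\ref{thm:main-lindet}) that the infima are attained, so the substitution is unambiguous.

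Second, for the case of spatially homogeneous coefficients I would simply remark that constant functions are trivially even on $\R$, so every coefficient listed in hypothesis (ii) of Proposition~\ref{prop:symm-case} satisfies the required symmetry without any further assumption. Hence hypothesis (ii) is automatically verified in the homogeneous setting, and the first part of the proposition then yields $c^*_R=c^*_L$ in that case as well.

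Since the entire mathematical content of the result lives in Proposition~\ref{prop:symm-case} (whose proof presumably relies either on the self-adjointness-type structure available when $\mu_u\equiv\mu_v$, or on a reflection argument $x\mapsto -x$ exchanging the eigenvector pair $(\varphi,\psi)$ in case (ii)) and in the variational formula \eqref{eq:speed}, there is essentially no obstacle here; the proof is a two-line deduction. The only place where one should be careful is to verify that the minima in \eqref{eq:speed} are attained, so that the substitution $\lambda\mapsto -\lambda$ can be performed term by term — but this is already built into Theorem~\ref{thm:main-lindet}.
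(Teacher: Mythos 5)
Your proof is correct and follows exactly the same route the paper indicates: the identity $k(-\lambda)=k(\lambda)$ from Proposition~\ref{prop:symm-case} combined with the formula \eqref{eq:speed} gives $c^*_R=c^*_L$ immediately, and the homogeneous case is covered because constants are trivially even (the paper itself states the result "is an immediate consequence of the fact that $k(-\lambda)=k(\lambda)$"). One small remark: the substitution $\lambda\mapsto-\lambda$ in the infimum does not actually require the infima to be attained, so that caveat is unnecessary, though harmless.
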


The above result is an immediate consequence of the fact that $k(-\lambda)=k(\lambda)$ which holds under the assumption (i) or (ii). However, without such assumptions, we may have $c^*_{R}\ne c^*_{L}$. 
We shall show such an example in our forthcoming paper \cite{GM}. 
Note that, in the case of spatially periodic scalar KPP type equations, it is known that $k(-\lambda)=k(\lambda)$, as we mentioned earlier, hence we always have $c^*_{R}=c^*_{L}$.

We next consider solutions with compactly supported initial data and discuss the so-called ``hair-trigger effect''. This concept was introduced by Aronson and Weinberger \cite{Aro-Wei-75}, who showed that any solution of the monostable equation $u_t=\Delta u+f(u)$ on $\R ^N$ with nonnegative nontrivial initial data $u_0$ with compact support converges to a positive stationary solution as $t\to+\infty$, no matter how small $u_0$ is. 
However, in the case of system \eqref{eq:main-sys}, we do not know if the condition $\lambda_1^{per}>0$ is sufficient to guarantee the hair-trigger effect, even if we weaken its statement to allow the solution to become simply uniformly positive instead of convergent to a positive equilibrium. 

As we state below, a proper criterion for obtaining a hair-trigger effect is the sign of Dirichlet principal eigenvalues on large domains, which guarantees that both $c^*_{L}$ and $c^*_{R}$ are positive.

\begin{thm}[Hair-trigger effect]\label{thm:hairtrigger}
	Let Assumption \ref{as:coop-comp} hold true. 
Then the following three conditions are equivalent: 
\[
{\rm (a)}\ \lambda_1^R>0 \ \ \hbox{for some} \ R>0,\quad\ 
{\rm (b)}\ \min_{\lambda\in\R }k(\lambda)>0, \quad\ 
{\rm (c)}\ c^*_{R}>0,\ c^*_{L}>0.
\]
If any of these conditions holds, there exists a number $\eta>0$ depending only on the coefficients of system \eqref{eq:main-sys} such that for any nonnegative bounded initial data $(u_0, v_0)$ satisfying $(u_0(x), v_0(x))\not\equiv (0, 0)$, the solution $\big(u(t, x), v(t, x)\big)$ of \eqref{eq:main-sys}--\eqref{main-initial} has the following property: 
	\begin{equation}\label{eq:hairtrigger-below}
		\liminf_{t\to+\infty}u(t, x)\geq \eta \ \ \text{and}\ \  \liminf_{t\to+\infty} v(t, x)\geq \eta \ \ \text{for all}\ \ x\in\R .
	\end{equation}
Furthermore, if, in addition, $u_0$ and $v_0$ are compactly supported, then the right front and the left front of $(u,v)$ propagate at the speed $c^*_{R}$ and $c^*_{L}$, respectively. More precisely,
\begin{subnumcases}{\label{right-spreading2}}
\label{right2a}
\displaystyle\ \liminf_{t\to\infty} \left[\inf_{0\leq x\leq ct}\min(u(t, x), v(t,x))\right] \geq \eta,
\ \ \ \hbox{for all}\ \ 0<c<c^*_{R},\\
\label{right2b}
\displaystyle\ \limsup_{t\to\infty} \left[\sup_{x\geq ct}\max(u(t, x), v(t,x))\right] = 0, \ \ \ \ \hbox{for all}\ \ c>c^*_{R}.
\end{subnumcases}
\begin{subnumcases}{\label{left-spreading2}}
\label{left2a}
\displaystyle\ \liminf_{t\to\infty} \left[\inf_{-ct\leq x\leq 0}\min(u(t, x), v(t,x))\right] \geq \eta,
\ \ \ \hbox{for all}\ \ 0<c<c^*_{L},\\
\label{left2b}
\displaystyle\ \limsup_{t\to\infty} \left[\sup_{x\leq -ct}\max(u(t, x), v(t,x))\right] = 0,\ \ \ \ \hbox{for all}\ \ c>c^*_{L}.
\end{subnumcases}
The assertions \eqref{right2a} and \eqref{left2a} hold for any nonnegative nontrivial solution of \eqref{eq:main-sys}.
\end{thm}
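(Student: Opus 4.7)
The plan is to split the proof into two parts: the equivalence (a)$\Leftrightarrow$(b)$\Leftrightarrow$(c), and then the hair-trigger and spreading statements under this common hypothesis. For (a)$\Leftrightarrow$(b), I would combine Theorem~\ref{thm:comp-dir-per} with the strict monotonicity of $R\mapsto\lambda_1^R$ from Proposition~\ref{prop:gen-princ-eig}: the map is strictly increasing with limit $\min_\lambda k(\lambda)$, hence $\lambda_1^R>0$ for some $R$ iff $\min_\lambda k(\lambda)>0$. For (b)$\Leftrightarrow$(c), the formulas \eqref{eq:speed} together with the convexity and quadratic bounds of $k$ from Proposition~\ref{prop:k(lambda)} do the job: if $\min_\lambda k(\lambda)>0$, then both $\lambda\mapsto k(\lambda)/\lambda$ and $\lambda\mapsto k(-\lambda)/\lambda$ (for $\lambda>0$) tend to $+\infty$ at $0^+$ (since $k(0)>0$) and at $+\infty$ (by the quadratic lower bound), so their infima are attained at nonzero arguments where $k>0$, giving $c^*_R,c^*_L>0$. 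Conversely $c^*_R,c^*_L>0$ force $k>0$ on $\R\setminus\{0\}$, and the alternative $k(0)\leq 0$ is excluded by convexity: it would imply $\liminf_{\lambda\to 0^+}k(\lambda)/\lambda\leq 0$, contradicting $c^*_R>0$.

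For the hair-trigger, I would first establish strict positivity: since the coupling coefficients $\mu_u,\mu_v$ in \eqref{eq:main-sys} are strictly positive, the linearization at the origin is cooperative and the strong maximum principle for weakly coupled parabolic systems yields $u(t,x)>0$, $v(t,x)>0$ for every $t>0$, $x\in\R$, starting from any nontrivial nonnegative data. Next, fixing $R_0$ with $\lambda_1^{R_0}>0$ and letting $(\varphi_0,\psi_0)$ be the Dirichlet principal eigenpair of Definition~\ref{def:Dirichlet-principal-eigenpair}, for $\eta_1>0$ sufficiently small the pair $(\eta_1\varphi_0,\eta_1\psi_0)$ extended by zero outside $(-R_0,R_0)$ forms a generalized stationary subsolution of \eqref{eq:main-sys}: inside $(-R_0,R_0)$ we have $\lambda_1^{R_0}\eta_1(\varphi_0,\psi_0)-O(\eta_1^2)\geq 0$, and the one-sided boundary derivatives at $\pm R_0$ have the correct sign for a distributional subsolution (Hopf lemma). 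The cooperative comparison principle, valid in the cooperative zone \eqref{cooperative-zone} and applicable thanks to the strict positivity just obtained, then delivers a universal $\eta_2>0$ and a fixed open interval $I\Subset(-R_0,R_0)$ such that $u(t,x),v(t,x)\geq\eta_2$ on $I$ for all $t\geq 1$.

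The main work is to propagate this local lower bound into the moving-window lower bounds \eqref{right2a}, \eqref{left2a}; the pointwise estimate \eqref{eq:hairtrigger-below} then follows by picking any $0<c<\min(c^*_R,c^*_L)$ and noting that any $x_0\in\R$ lies in $[-ct,ct]$ for $t$ large. My strategy is the classical KPP-type moving subsolution adapted to the cooperative periodic setting. For $c<c^*_R$, by \eqref{eq:speed} and strict convexity of $k$, there exist $0<\lambda<\mu$ with $k(\lambda)>c\lambda$ and $k(\mu)>c\mu$; with $(\varphi_\lambda,\psi_\lambda)$ and $(\varphi_\mu,\psi_\mu)$ the corresponding periodic eigenvectors from Definition~\ref{def:k(lambda)}, I would set
\[
\underline{u}(t,x)=A\bigl[\varphi_\lambda(x)e^{-\lambda(x-ct)}-B\varphi_\mu(x)e^{-\mu(x-ct)}\bigr]_+,\qquad
\underline{v}(t,x)=A\bigl[\psi_\lambda(x)e^{-\lambda(x-ct)}-B\psi_\mu(x)e^{-\mu(x-ct)}\bigr]_+,
\]
with $A>0$ small and $B>0$ tuned so that $(\underline{u},\underline{v})$ fits under the Step 2 lower bound at the leading edge of its support. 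The subsolution property would follow from the $\lambda$- and $\mu$-periodic eigenvalue equations together with the smallness of $A$, and the cooperative comparison principle would then transfer the rightward drift of the support (at speed $c$) onto the actual solution, giving \eqref{right2a}. A symmetric construction with negative exponents handles \eqref{left2a}. The upper bounds \eqref{right2b}, \eqref{left2b} for compactly supported data are much easier: since the quadratic nonlinearities are nonpositive, every solution of \eqref{eq:main-sys} is a subsolution of the cooperative linearization \eqref{eq:linearized}, so it is dominated by the exact linear supersolution $Me^{-\lambda(x-ct)}(\varphi_\lambda,\psi_\lambda)$ with $k(\lambda)/\lambda<c$ and $M$ chosen large enough to absorb the initial data.

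The main obstacle I anticipate is verifying the subsolution property for the moving profile $(\underline{u},\underline{v})$ above. Unlike scalar KPP, where one eigenmode suffices and the algebra is elementary, the hybrid cooperative-competitive structure of \eqref{eq:main-sys} forces us to track the interaction of the two coupled periodic eigenvector pairs $(\varphi_\lambda,\psi_\lambda)$ and $(\varphi_\mu,\psi_\mu)$, which solve coupled $\lambda$- and $\mu$-periodic eigenvalue problems that do not decouple. Balancing $A,B$ so that the subsolution inequality holds globally in the moving frame after taking the positive part, while keeping $(\underline{u},\underline{v})$ inside the cooperative zone where comparison applies, will be the core technical point of the argument.
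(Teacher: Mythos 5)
Your treatment of the equivalence (a)$\Leftrightarrow$(b)$\Leftrightarrow$(c) is correct and matches the paper's (strict monotonicity of $R\mapsto\lambda_1^R$ plus Theorem~\ref{thm:comp-dir-per} for (a)$\Leftrightarrow$(b); convexity and the quadratic bounds of $k$ for (b)$\Leftrightarrow$(c)), and your handling of the upper bounds \eqref{right2b}, \eqref{left2b} via the linear exponential barrier is also the same as the paper's.

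However, there is a genuine gap in your proof of the hair-trigger bound \eqref{eq:hairtrigger-below} and of the moving lower bounds \eqref{right2a}, \eqref{left2a}, and it is not the one you anticipate at the end. You build a small generalized subsolution $(\underline u,\underline v)$ that lies in the cooperative zone and then invoke ``the cooperative comparison principle, valid in the cooperative zone.'' But a comparison between $(\underline u,\underline v)$ and the actual solution $(u,v)$ of \eqref{eq:main-sys} requires cooperativeness on the whole order interval $[\underline u,u]\times[\underline v,v]$: writing $U=u-\underline u$, $V=v-\underline v$, the equation for $U$ carries a cross-term whose coefficient is $\partial_v f^u$ evaluated at an intermediate point with $u$-component in $[\underline u,u]$, i.e.\ roughly $\mu_v-\kappa_u u$. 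Since by Proposition~\ref{prop:uniform-bound} the solution $(u,v)$ is only bounded by $\overline{K}=r_{\max}/\kappa_{\min}$, which may exceed the cooperative-zone bound $\mu_v/\kappa_u$, this coefficient can be negative and the cooperative comparison fails. Keeping the \emph{subsolution} small does not help; it is the largeness of $(u,v)$ that breaks the argument. Your strong-maximum-principle positivity step does nothing to repair this.

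The paper circumvents this precisely by introducing the auxiliary system \eqref{eq:auxiliary-below} with the extra damping $-\beta\tilde u^2$, $-\beta\tilde v^2$: if the initial data is below $\tfrac12 K$, then $\tilde u+\tilde v\leq K$ for all time, so $(\tilde u,\tilde v)$ stays permanently in the cooperative zone. Lemma~\ref{lem:comparison-below} then compares the auxiliary solution to the true solution not by a sub/supersolution argument for a single system, but by a direct computation on the differences $U=u-\tilde u$, $V=v-\tilde v$, which satisfy a linear parabolic system whose off-diagonal coefficients are $\mu_v-\kappa_u\tilde u$ and $\mu_u-\kappa_v\tilde v$ --- they involve only the small auxiliary solution, not $u,v$ --- so that system \emph{is} cooperative and $U,V\geq 0$ follows. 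All of the subsequent lower-bound constructions (the Dirichlet box on $[-R,R]$ with its stationary solution $(P^R,Q^R)$, the translated windows covering $\R$, and the further auxiliary system \eqref{eq:auxiliary-below-3} with $\beta'$ used to push the front) are carried out for this damped cooperative system and then transferred to $(u,v)$ through $\tilde u\leq u$, $\tilde v\leq v$. Your two-mode moving subsolution, even if it can be made a subsolution of \eqref{eq:main-sys}, cannot be compared with $(u,v)$ without an intermediate device of this kind; without it, \eqref{eq:hairtrigger-below} and \eqref{right2a}, \eqref{left2a} do not follow.
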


The above theorem shows that the propagation speeds of solutions with compactly supported initial data are the same as those of solutions with front-like initial data. Therefore, the notions of right and left spreading speeds, $c^*_{R}$ and $c^*_{L}$, have a rather universal nature.

Note that if the coefficients of \eqref{eq:main-sys} satisfy the symmetry conditions stated in (i) or (ii) of Proposition~\ref{prop:symm-case}, then by \eqref{k-even}, the above conditions (a), (b), (c) are all equivalent to $\lambda_1^{per}>0$.


\subsection{Global asymptotic stability of the positive equilibrium}
\label{ss:global stability}

Next we turn to the asymptotic behavior of the solutions to the Cauchy problem \eqref{eq:main-sys} in the case where the coefficients are independent of $x$. More precisely, we consider the homogeneous problem
\begin{equation}\label{eq:syst-hom-rd}
	\begin{system}
		\relax &u_t-\sigma u_{xx}=\big(r_u-\kappa_u(u+v)\big)u+\mu_vv-\mu_uu, \\
		\relax &v_t-\sigma v_{xx}=\big(r_v-\kappa_v(u+v)\big)v+\mu_uu-\mu_vv,
	\end{system}
	\quad t>0,\ x\in\R ,
\end{equation}
where $\sigma>0$, $r_u\in\R$, $r_v\in\R$, $\kappa_u>0$, $\kappa_v>0$, $\mu_u>0$, $\mu_v>0$. The linearization of 
\eqref{eq:syst-hom-rd} around $(u, v)=(0,0)$ is given in the following form, which is a spatially homogeneous version of \eqref{eq:linearized}:
\begin{equation}\label{eq:syst-hom-rd-linearized}
	\begin{system}
		\relax &u_t-\sigma u_{xx}=(r_u-\mu_u)u + \mu_vv, \\
		\relax &v_t-\sigma v_{xx}=(r_v-\mu_v)v + \mu_uu,
	\end{system}
	\quad t>0,\ x\in\R .
\end{equation}
As we have seen before, this is a cooperative system, and since the nonlinearity of \eqref{eq:syst-hom-rd} is sublinear, any nonnegative solutions of \eqref{eq:syst-hom-rd} is a subsolution of the cooperative system \eqref{eq:syst-hom-rd-linearized}. Consequently, if we denote by $\big(u,v\big)$ and $\big(\bar u,\bar v\big)$ the solutions of \eqref{eq:syst-hom-rd} and \eqref{eq:syst-hom-rd-linearized}, respectively, then we have
\[
\begin{split}
& \big(u(0,x),v(0,x)\big)\leq \big(\bar u(0,x),\bar v(0,x)\big)\ \ \hbox{for}\ \ x\in\R \\
& \hspace{80pt}
\Rightarrow\ 
\big(u(t,x),v(t,x)\big)\leq \big(\bar u(t,x),\bar v(t,x)\big)\ \ \hbox{for}\ \ t\geq 0,\, x\in\R .
\end{split}
\]

The coefficient matrix of the right-hand side of \eqref{eq:syst-hom-rd-linearized} is given by 
	\begin{equation}\label{matrixA}
		A:=\begin{pmatrix} r_u-\mu_u & \mu_v \\ \mu_u & r_v-\mu_v 
		\end{pmatrix}.
	\end{equation}
	Since the off-diagonal entries of $A$ are positive, we easily see that $A$ has real eigenvalues. 
	Define 
	\begin{equation} \label{eq:lambdaA}
		\lambda_A:=\max\{\lambda\in\R\, |\,\lambda \text{ is an eigenvalue of A}\}.
	\end{equation}
	By the Perron-Frobenius theory, the eigenvector 
	$(\varphi_A^u, \varphi_A^v)^T$ corresponding to $\lambda_A$ is positive. 
	
The sign of $\lambda_A$ plays a key role in the analysis of the corresponding ODE system 
		\begin{equation}\label{eq:syst-ode}
			\begin{system}
				\relax &u_t=(r_u-\kappa_u(u+v))u+\mu_vv-\mu_uu, \\
				\relax &v_t=(r_v-\kappa_v(u+v))v+\mu_uu-\mu_vv.
			\end{system}
		\end{equation}
By definition, the trivial equilibrium point $(0,0)$ is linearly unstable if $\lambda_A>0$ and linearly stable if $\lambda_A<0$. 
Incidentally, if $r_u=r_v$, then $\lambda_A=r_u=r_v$. 

We remark that the value of $\lambda_A$ also plays
an important role in the propagation dynamics of \eqref{eq:syst-hom-rd}. 
To see this, note first that the principal eigenvector $\big(\varphi(x),\psi(x)\big)$ of \eqref{eq:lambda-periodic-principal-eigen} is a constant function. This is because, for any real number $\alpha$, $\big(\varphi(x+\alpha),\psi(x+\alpha)\big)$ is again a principal eigenvector since the coefficients are spatially homogeneous, hence by the uniqueness of the principal eigenvector (up to multiplication of a constant), we have  $\big(\varphi(x+\alpha),\psi(x+\alpha)\big)=\big(\varphi(x),\psi(x)\big)$ for any $a\in\R $, which implies that $\big(\varphi(x),\psi(x)\big)$ is independent of $x$. 
Consequently, the $\lambda$-periodic eigenproblem \eqref{eq:lambda-periodic-principal-eigen} is given in the following simpler form:
\begin{equation}\label{eq:lambda-periodic-eigen-homo}
\begin{system}
	\relax & (\lambda^2\sigma+r_u)\varphi+\mu_v\psi-\mu_u\varphi=k(\lambda) \varphi, \\
	\relax & (\lambda^2\sigma+r_v)\psi+\mu_u\varphi-\mu_v\psi=k(\lambda) \psi.
\end{system}
\end{equation}
It follows that
\begin{equation}\label{k(lambda)-homo}
k(\lambda)=\sigma \lambda^2+\lambda_A.
\end{equation}
In particular, we have
\begin{equation}\label{lambda_A=lambda-per}
\lambda_A=k(0)=\lambda_1^{per}.
\end{equation}
If $\lambda_A>0$, then by \eqref{k(lambda)-homo} and \eqref{eq:speed}, 
\begin{equation}\label{c*-homo}
c^*_{R}=c^*_{L}=2\sqrt{\sigma \lambda_A}>0, 
\end{equation}
hence the hair-trigger effect holds by virtue of Theorem~\ref{thm:hairtrigger}.

Let us come back to the ODE system \eqref{eq:syst-ode} and discuss its dynamics. 
Throughout this section we assume the following, which is a restatement of Assumption~\ref{as:coop-comp} in the spatial homogeneous setting:

	\begin{assumption}\label{as:cond-instab-0}
	The coefficients of \eqref{eq:syst-hom-rd} satisfy Assumption \ref{as:coop-comp}, that is, $\sigma$, $\kappa_u$, $\kappa_v$, $\mu_v$, $\mu_u$ are positive constants and $r_u, r_v$ are constants of an arbitrary sign. 
	\end{assumption}

It can be seen that the condition $\lambda_A>0$ is always satisfied when $r_u>0$ and $r_v>0$, and always fails when $r_u<0$ and $r_v<0$. The situation when $r_u$ and $r_v$ do not have the same sign is more intricate. In this case, there may exist a threshold depending on the values of $\mu_u$, $\mu_v$, such that $(0, 0)$ is stable for small values of $\mu_u$, $\mu_v$, and unstable for larger values. We discuss this threshold later in remark \ref{rem:stab-0-ode}.

The following proposition classifies the long-time behavior of all nonnegative solutions of \eqref{eq:syst-ode} in terms of the sign of $\lambda_A$. 
Note that elements of the proof of this proposition can be found in the work of  Cantrell, Cosner and Yu \cite{Can-Cos-Yu-18}, who proved the global asymptotic stability of the positive equilibrium for a similar system in a bounded domain.

\begin{prop}[Asymptotic behavior of the ODE system]\label{prop:longtime-ode}
	Let Assumption~\ref{as:cond-instab-0} hold, and let
	$(u(t), v(t))$ be the solution of \eqref{eq:syst-ode} starting from a nonnegative nontrivial initial condition $(u_0, v_0)$.
	\begin{enumerate}[label={(\roman*)}]
		\item If $\lambda_A>0$, there is a unique positive equilibrium $(u^*, v^*)$ for \eqref{eq:syst-ode}, and $(u(t), v(t))$ converges to $(u^*, v^*)$ as $t\to+\infty$.
		\item If $\lambda_A\leq 0$, then $(u(t), v(t))$ converges to $(0,0)$ as $t\to+\infty$.
	\end{enumerate}
\end{prop}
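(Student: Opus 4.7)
My approach is to treat the two cases separately, using a simple linear Lyapunov function for the extinction case and a two-dimensional phase-plane analysis for the persistence case.

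For part (ii), let $(\alpha, \beta)$ be a positive left Perron--Frobenius eigenvector of the matrix $A$ defined in \eqref{matrixA} associated with $\lambda_A$, and set $V(u,v) := \alpha u + \beta v$. Using the eigenvector relation $(\alpha, \beta) A = \lambda_A (\alpha, \beta)$, a direct computation yields
\[
\dot V = \lambda_A V - (u+v)\bigl(\alpha \kappa_u u + \beta \kappa_v v\bigr).
\]
When $\lambda_A \leq 0$, both terms on the right are nonpositive on $\R_+^2$, so $V$ is non-increasing along trajectories. Since $\alpha, \beta, \kappa_u, \kappa_v > 0$, $\dot V = 0$ on $\R_+^2$ forces $u = v = 0$, and LaSalle's invariance principle yields $(u(t), v(t)) \to (0,0)$.

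For part (i), I combine an algebraic existence-uniqueness argument for $(u^*, v^*)$ with a Poincar\'e--Bendixson analysis. Multiplying the two equilibrium equations and using $u^*, v^* > 0$ shows that $s^* := u^* + v^*$ must satisfy
\[
(r_u - \mu_u - \kappa_u s)(r_v - \mu_v - \kappa_v s) = \mu_u \mu_v,
\]
with both factors necessarily negative. This is a quadratic in $s$ which, on the admissible range, has a unique root precisely when $\lambda_A > 0$; the values of $u^*$ and $v^*$ are then determined uniquely by substituting $s^*$ back into the original equations. Boundedness of trajectories is given by Proposition~\ref{prop:uniform-bound}, and the Lyapunov function $V$ above satisfies $\dot V \geq \tfrac{\lambda_A}{2} V$ when $V$ is sufficiently small, so every $\omega$-limit set of a nontrivial trajectory is bounded away from $(0,0)$. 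To exclude periodic orbits in the open positive quadrant I apply Dulac's criterion with the weight $B(u,v) = 1/(uv)$: writing $F_1, F_2$ for the right-hand sides of \eqref{eq:syst-ode}, a direct computation gives
\[
\partial_u(B F_1) + \partial_v(B F_2) = -\frac{\kappa_u}{v} - \frac{\kappa_v}{u} - \frac{\mu_v}{u^2} - \frac{\mu_u}{v^2} < 0,
\]
so no closed orbit lies in $\{u > 0,\, v > 0\}$. Any nontrivial nonnegative initial datum generates a trajectory that enters the open positive quadrant in finite time (the mutation terms $\mu_v v$ and $\mu_u u$ act as positive forcings in the linear ODEs satisfied by $u$ and $v$); Poincar\'e--Bendixson then forces the $\omega$-limit to be a single equilibrium in the open positive quadrant, which must be $(u^*, v^*)$.

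The main obstacle I anticipate is the algebraic uniqueness step: one must carefully track the signs of $(r_u - \mu_u - \kappa_u s)$ and $(r_v - \mu_v - \kappa_v s)$ and relate them to $\lambda_A > 0$, distinguishing the subcases $\det A < 0$ (where $\lambda_A > 0$ is automatic) and $\det A \geq 0$ (where $\tr A > 0$ is needed to ensure $\lambda_A > 0$). A secondary technical point in the Poincar\'e--Bendixson step is to exclude more complicated $\omega$-limit structures such as homoclinic loops at $(u^*, v^*)$; a direct Jacobian computation at $(u^*, v^*)$ shows that the trace is strictly negative and the determinant strictly positive, so $(u^*, v^*)$ is a locally asymptotically stable node or focus and cannot support such a loop.
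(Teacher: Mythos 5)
Your proposal is correct, but it departs from the paper's route in both parts, in ways worth noting. For part (ii) you use a single scalar Lyapunov function $V=\alpha u+\beta v$ built from a left Perron--Frobenius eigenvector, which cleanly covers both $\lambda_A<0$ and $\lambda_A=0$ at once. The paper instead compares $(u,v)$ with solutions of the linearized cooperative system: for $\lambda_A<0$ it dominates by $Me^{\lambda_A t}(\varphi^u_A,\varphi^v_A)$, and for $\lambda_A=0$ it constructs the nonsmooth Lyapunov function $M(u,v):=\min\{M\geq0: u\leq M\varphi^u_A,\; v\leq M\varphi^v_A\}$. Your linear $V$ is a tidier unification of the same idea. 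For part (i) the divergence is more substantial. The paper reduces the equilibrium equations via $Q=u/v$, $S=u+v$ to a downward-opening quadratic in $Q$ with positive constant term (uniqueness immediate), invokes an ejective fixed point theorem for existence, and then proves convergence either by the nonlinear Lyapunov function $\mathcal F^K(u,v)=\mathcal F_u(u)+K\mathcal F_v(v)$ (when $\max(r_u-\mu_u,r_v-\mu_v)>0$) or by an ``ultimately cooperative'' comparison argument (otherwise). You instead reduce to a quadratic in $s=u+v$, and prove convergence by Dulac's criterion with weight $B=1/(uv)$ followed by Poincar\'e--Bendixson, using the origin-repulsion from $V$ and the local linear stability of $(u^*,v^*)$ (trace/determinant signs, Lemma~\ref{lem:stat-ode-stability}) to rule out the remaining $\omega$-limit structures. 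Your Dulac computation checks out, and note that Poincar\'e--Bendixson simultaneously gives existence of the positive equilibrium, sidestepping the ejective fixed point argument. The trade-off is that your approach is inherently planar, while the paper's Lyapunov and comparison machinery is exactly what it reuses later in the PDE setting: $\mathcal F^K$ reappears in the proof of the Liouville-type Theorem~\ref{thm:entire-sol-hom}, and the ultimately-cooperative comparison reappears in Step~1 of that same proof and in the homogenization section. So your argument is a legitimate, and arguably more elementary, ODE proof, but it does not set up the infrastructure the rest of the paper relies on. One small caveat you already flag yourself: to close the uniqueness-and-existence step for the $s$-quadratic one must verify $s^*>0$ and the sign constraints on both linear factors, distinguishing $\det A<0$ from $\det A\geq 0$, $\tr A>0$; the paper's $Q$-reduction avoids this bookkeeping because its quadratic automatically has one positive root.
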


As we shall see, the statement (ii) of the above proposition follows easily from the fact that solutions of \eqref{eq:syst-ode} are subsolutions of the linearized system (the ODE version of \eqref{eq:syst-hom-rd-linearized}), which is a cooperative system. On the other hand, the proof of the statement (i) is highly nontrivial, because the system \eqref{eq:syst-ode} is neither entirely cooperative nor entirely competitive. 
To prove the convergence $(u(t),v(t))\to(u^*, v^*)$, we will use two different methods separately depending on the parameter values, one based on a Lyapunov function, and the other based on the so-called ``ultimate cooperative'' property; see section~\ref{ss:global stability-proof} for details.

The following theorem states that, under the assumption $\lambda_A>0$, solutions to the Cauchy problem associated with \eqref{eq:syst-hom-rd} converge in long time to the stationary solution $(u^*, v^*)$. 
The proof of this theorem is based on a Liouville type result on entire solutions of \eqref{eq:syst-hom-rd} (Theorem~\ref{thm:entire-sol-hom}).

\begin{thm}[Asymptotic behavior of the homogeneous RD problem]\label{thm:ltb}
Let Assumption~\ref{as:cond-instab-0} hold, and assume $\lambda_A>0$.  
Let $c^*_{R}$, $c^*_{L}$ be the right and left spreading speeds associated with \eqref{eq:syst-hom-rd}, respectively. 
Then $c^*_{R}=c^*_{L}=2\sqrt{\sigma\lambda_A}>0$. Furthermore, any nonnegative solution $(u(t,x), v(t,x))$ to the Cauchy problem with bounded nontrivial initial data converges as $t\to+\infty$ to the unique positive stationary solution $(u^*, v^*)$ of \eqref{eq:syst-hom-rd}, uniformly in the sense that for each $0<c<c^*_{R}$ we have:
	\begin{equation}\label{u(t,x)-to-u*}
		\lim_{t\to+\infty}\sup_{|x|\leq ct}\max\big(|u(t,x)-u^*|, |v(t,x)-v^*|\big)=0.
	\end{equation}
\end{thm}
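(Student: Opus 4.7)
The spreading speeds part is essentially a free corollary: formula \eqref{k(lambda)-homo} gives $k(\lambda)=\sigma\lambda^2+\lambda_A$, so that $k(\lambda)/\lambda$ and $k(-\lambda)/\lambda$ are both minimized over $\lambda>0$ at $\lambda=\sqrt{\lambda_A/\sigma}$, yielding $c^*_{R}=c^*_{L}=2\sqrt{\sigma\lambda_A}$ via \eqref{eq:speed}. Moreover, since $\min_{\lambda}k(\lambda)=\lambda_A>0$, condition (b) of Theorem~\ref{thm:hairtrigger} is satisfied, so the hair-trigger effect gives a universal $\eta>0$ and, for any nontrivial nonnegative bounded solution and any $c<c^*_{R}$,
\[
\liminf_{t\to\infty}\inf_{|x|\leq ct}\min\bigl(u(t,x),v(t,x)\bigr)\geq \eta,
\]
while Proposition~\ref{prop:uniform-bound} supplies the upper bound $u+v\le M$ for some $M>0$ and $t$ large.

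The core of the proof is the convergence statement \eqref{u(t,x)-to-u*}, which I would establish by contradiction combined with a compactness/entire-solution argument. Suppose there exist $c\in(0,c^*_{R})$, $\delta>0$ and sequences $t_n\to\infty$, $|x_n|\le ct_n$ with
\[
\bigl|u(t_n,x_n)-u^*\bigr|+\bigl|v(t_n,x_n)-v^*\bigr|\ge \delta.
\]
Pick $c'\in(c,c^*_{R})$. For $n$ large, the shifted cones around $(t_n,x_n)$ still lie inside the cone $|x|\le c't$ on every compact time window, so the translated solutions
\[
(u_n,v_n)(t,x):=\bigl(u(t+t_n,x+x_n),\,v(t+t_n,x+x_n)\bigr)
\]
are uniformly bounded above by $M$ and, once $n$ is large enough, uniformly bounded below by $\eta$ on every compact set of $\R\times\R$. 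By parabolic Schauder estimates (applicable since the coefficients are constant and the right-hand sides are smooth in $(u,v)$), the sequence is relatively compact in $C^{1,2}_{\rm loc}(\R\times\R)^2$, and up to extraction converges to an entire solution $(U,V)$ of \eqref{eq:syst-hom-rd} satisfying $\eta\le U,V\le M$ on $\R\times\R$, with $|U(0,0)-u^*|+|V(0,0)-v^*|\ge\delta$; in particular $(U,V)\not\equiv (u^*,v^*)$.

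The final ingredient is the Liouville-type result announced as Theorem~\ref{thm:entire-sol-hom}: every bounded entire solution of \eqref{eq:syst-hom-rd} that is bounded below by a positive constant must be the constant equilibrium $(u^*,v^*)$. Applying this to $(U,V)$ yields a contradiction and finishes the proof of \eqref{u(t,x)-to-u*}.

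The genuinely hard step is the Liouville result, because the system is neither fully cooperative nor fully competitive, so neither the sliding method nor a direct comparison with ordered sub/supersolutions applies globally. I would imitate the ODE strategy of Proposition~\ref{prop:longtime-ode}: use the Lyapunov function constructed there (or its PDE analogue, integrated over $\R$ against a suitable positive weight, or over the ``ultimate cooperative'' regime once the trajectory enters it) to show that along any entire orbit the Lyapunov functional is monotone and bounded, forcing $\partial_t(U,V)\equiv 0$, i.e.\ $(U,V)$ is a bounded positive stationary solution of \eqref{eq:syst-hom-rd}. A final step reduces bounded positive stationary solutions on $\R$ to constants (standard for cooperative KPP-type systems once a uniform positive lower bound is available, via a sweeping argument against the constant solution $(u^*,v^*)$) and then identifies the only such constant as $(u^*,v^*)$ through the ODE uniqueness part of Proposition~\ref{prop:longtime-ode}. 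The main difficulty there is to propagate ``ultimate cooperativity'' into the PDE setting with spatial dependence present in the entire solution, which is the reason the Liouville theorem is singled out as a separate result rather than absorbed into the present proof.
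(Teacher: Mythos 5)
Your proof of the theorem itself matches the paper's argument: the speed formula $c^*_{R}=c^*_{L}=2\sqrt{\sigma\lambda_A}$ falls out of $k(\lambda)=\sigma\lambda^2+\lambda_A$ and \eqref{eq:speed}; then a contradiction argument combining parabolic compactness of the translated solutions, the hair-trigger lower bound $\geq\eta$ inside a slightly wider cone $|x|\leq c't$ with $c<c'<c^*_R$, and the Liouville-type Theorem~\ref{thm:entire-sol-hom} applied to the entire limit $(U,V)$. (One small improvement of yours over the paper's typeset proof: you translate by both $t_n$ and $x_n$, which is what one really needs to bring the bad point to $(0,0)$; the paper only writes $u(t+t_n,x)$, presumably a typo, since translation invariance in $x$ is available because the coefficients are constant.)

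Where you drift from the paper is in the closing speculation about how to prove Theorem~\ref{thm:entire-sol-hom} itself, and two of the ideas you float there would not work as stated. First, in the Lyapunov case the paper does not integrate $\mathcal F^K$ against a weight or argue that a monotone-bounded functional forces $\partial_t(U,V)\equiv 0$; instead it observes that $w(t,x):=\mathcal F^K(U(t,x),V(t,x))$ is, pointwise, a \emph{bounded entire subsolution of the heat equation} (because $\mathcal F_u''\geq 0$, $\mathcal F_v''\geq 0$, and the Lyapunov inequality $\leq -Q\leq 0$ holds at each point), hence constant by the parabolic Liouville theorem, whence $Q(U-u^*,V-v^*)\equiv 0$ and $(U,V)\equiv(u^*,v^*)$ directly — no intermediate reduction to stationary solutions. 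Second, your proposed ``sweeping argument against $(u^*,v^*)$, standard for cooperative KPP systems'' cannot be invoked as a final step, precisely because the system is not cooperative where $u$ or $v$ is large, which is the obstruction the paper circumvents by working with the modified cooperative system \eqref{upper-ODE}/\eqref{upper-RD} in the ultimately cooperative case and with the convexity-plus-heat-Liouville trick in the other case. Since Theorem~\ref{thm:entire-sol-hom} is cited as a black box, none of this affects the validity of your proof of the present statement, but the sketch should not be read as a working outline of the Liouville theorem.
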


\subsection{Homogenization}\label{ss:homogenization}
 
Here we extend the global stability result of the last section to the case of rapidly oscillating coefficients. Our method is based on a combination of dynamical systems theory and parabolic homogenization techniques. In the case of scalar equations with periodic coefficients, the homogenization limits of spreading speeds and traveling waves have been studied in particular by  El Smaily \cite{ElS-08, ElS-10} and El Smaily, Hamel and Roques \cite{ElS-Ham-Roq-09}.

In stating our results, we restrict ourselves to the case $L=1$, without loss of generality. For each $1$-periodic function $\sigma(x)$, $\sigma(x)$, $r_u(x)$, $r_v(x)$, 
$\mu_u(x)$, $\mu_v(x)$, we define:
\begin{equation}\label{rapid-osc-coeffs}
    \begin{aligned}
	r_u^\ep(x)&:=r_u\left(\frac{x}{\ep}\right), & \kappa_u^\ep(x)&:=\kappa_u\left(\frac{x}{\ep}\right) , & \mu_u^\ep(x)&:=\mu_u\left(\frac{x}{\ep}\right), \\
	r_v^\ep(x)&:=r_v\left(\frac{x}{\ep}\right), &\kappa_v^\ep(x)&:=\kappa_v\left(\frac{x}{\ep}\right), & \mu_v^\ep(x)&:=\mu_v\left(\frac{x}{\ep}\right), \\
	&&\sigma^\ep(x)&:=\sigma\left(\frac{x}{\ep}\right)
    \end{aligned}
\end{equation}
and
\begin{equation}\label{eq:mean-coeffs}
    \begin{aligned}
	\overline{r_u}&:=\int_0^1r_u(x)\dd x , & \overline{\kappa_u}&:=\int_0^1\kappa_u(x)\dd x , & \overline{\mu_u}&:=\int_0^1\mu_u(x)\dd x, \\
	\overline{r_v}&:=\int_0^1r_v(x)\dd x, &\overline{\kappa_v}&:=\int_0^1\kappa_v(x)\dd x, & \overline{\mu_v}&:=\int_0^1\mu_v(x)\dd x,  
    \end{aligned}
\end{equation}
\begin{align*}
	\overline{\sigma}^H&:=\left(\int_0^1\frac{1}{\sigma(x)}\dd x\right)^{-1} .
\end{align*}

We study the following system, whose coefficients oscillate rapidly when $\ep$ is small:
	\begin{equation}\label{eq:rapidosc}
		\begin{system}
			\relax &u_t=(\sigma^\ep(x) u_{x})_x+(r_u^\ep(x)-\kappa_u^\ep(x)(u+v))u+\mu_v^\ep(x)v-\mu_u^\ep(x)u, \\
			\relax &v_t=(\sigma^\ep(x) v_{x})_x+(r_v^\ep(x)-\kappa_v^\ep(x)(u+v))v+\mu_u^\ep(x)u-\mu_v^\ep(x)v
		\end{system}
		\ \ \hbox{on}\ \ \R.
	\end{equation}
	On a formal level, the homogenization limit of \eqref{eq:rapidosc} as $\ep\to 0$ is given by:
	\begin{equation}\label{eq:homogenized}
		\begin{system}
			\relax &u_t=\overline{\sigma}^H u_{xx}+(\overline{r_u}-\overline{\kappa_u}(u+v))u+\overline{\mu_v}v-\overline{\mu_u}u, \\
			\relax &v_t=\overline{\sigma}^H v_{xx}+(\overline{r_v}-\overline{\kappa_v}(u+v))v+\overline{\mu_u}u-\overline{\mu_v}v
		\end{system}
		\ \ \hbox{on}\ \ \R.
	\end{equation}
	
	The main result of this section is the following:

\begin{thm}[Homogenization]\label{thm:rapidosc}
    Let $\sigma(x)$, $r_u(x)$, $r_v(x)$, $\kappa_u(x)$, $\kappa_v(x)$, $\mu_u(x)$ and $\mu_v(x)$ be $1$-periodic functions that satisfy Assumption \ref{as:coop-comp} with $L=1$, and such that the matrix $A$ in \eqref{matrixA} with the entries $\overline{r_u}$, $\overline{r_v} $, $\overline{\mu_u}$ and $\overline{\mu_v}$ satisfies $\lambda_A>0$. 
	Then there is $\bar\ep>0$ such that for each $0<\ep<\bar \ep$, 
	\begin{enumerate}[label={\rm(\roman*)}]
		\item the system \label{item:rapid-osc-unique}
		\eqref{eq:rapidosc} possesses 
		a unique positive stationary solution $(u^*_\ep(x), v^*_\ep(x))$; furthermore, $(u^*_\ep(x), v^*_\ep(x))$ is $\ep$-periodic and converges to $(u^*,v^*)$ as $\ep\to 0$ uniformly on $\R$, where $(u^*,v^*)$ is the positive stationary solution of the homogenized system \eqref{eq:homogenized};
		\item \label{item:rapid-osc-speeds}
		 	let $c^*_{\ep,R}, c^*_{\ep,L}$ denote the right and left spreading speeds of the system \eqref{eq:rapidosc}, respectively, then $c^*_{\ep,R}>0$, $c^*_{\ep,L}>0$ and 
			\begin{equation}\label{convergence-speeds}
			\lim_{\ep\to 0}c^*_{\ep,R}=\lim_{\ep\to 0}c^*_{\ep,L}=\overline{c^*_R}\ (\,=\overline{c^*_L}\;) =2\sqrt{\overline{\sigma}^H\lambda_A},
			\end{equation}
			where $\overline{c^*_R}$ and $\overline{c^*_L}$ denote the right and left spreading speeds of the homogenized system \eqref{eq:homogenized};
		\item \label{item:rapid-osc-cv}
			any solution to the Cauchy problem \eqref{eq:rapidosc} starting from a nonnegative nontrivial bounded initial condition converges as $t\to+\infty$ to $(u^*_\ep(x), v^*_\ep(x))$, uniformly in the sense that for any $c_1, c_2$ with $0<c_1<c^*_{\ep,L}$, $0<c_2<c^*_{\ep,R}$, we have
			\begin{equation}\label{uep(t,x)-to-u*ep}
				\lim_{t\to+\infty}\,\sup_{-c_1t\leq x\leq c_2 t}\max\big(|u(t,x)-u^*_\ep(x)|, |v(t,x)-v^*_{\ep}(x)|\big)=0.
			\end{equation}
	\end{enumerate}
\end{thm}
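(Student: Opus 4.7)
My approach would break the proof into three linked pieces, with periodic homogenization of the linearized eigenvalue problem as the common engine.

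\textbf{Part (ii) via homogenization of $k_\ep(\lambda)$.} The first step is to show that the $\lambda$-periodic principal eigenvalue $k_\ep(\lambda)$ associated to the linearization of \eqref{eq:rapidosc} converges, locally uniformly in $\lambda$, to the corresponding eigenvalue for the homogenized system \eqref{eq:homogenized}. Because \eqref{eq:homogenized} has constant coefficients, \eqref{k(lambda)-homo} gives the explicit limit $k_0(\lambda)=\overline{\sigma}^H\lambda^2+\lambda_A$. I would rescale $y=x/\ep$, reducing the $\ep$-periodic cell problem to a fixed $1$-periodic eigenvalue problem carrying a parameter, and use a standard two-scale expansion to identify the harmonic-mean diffusion $\overline{\sigma}^H$ (the off-diagonal zero-order terms pass to the spatial averages, reproducing the matrix $A$ from \eqref{matrixA}). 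Combining this convergence with the uniform-in-$\ep$ quadratic bounds of Proposition~\ref{prop:k(lambda)}\ref{item:k(lambda)concave}, the minimum of $\lambda\mapsto k_\ep(\lambda)/\lambda$ on $\lambda>0$ is attained in a compact interval independent of small $\ep$, so I may pass to the limit and obtain $c^*_{\ep,R}\to \min_{\lambda>0}k_0(\lambda)/\lambda=2\sqrt{\overline{\sigma}^H\lambda_A}$; the left-facing case is symmetric. Positivity of $c^*_{\ep,R}$ and $c^*_{\ep,L}$ for small $\ep$ follows since $\min_\lambda k_\ep(\lambda)\to\lambda_A>0$, via Theorem~\ref{thm:hairtrigger}\,(b)$\Leftrightarrow$(c).

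\textbf{Existence and convergence of $(u^*_\ep,v^*_\ep)$.} Since $\lambda_1^{per,\ep}=k_\ep(0)\to\lambda_A>0$, for $\ep$ small enough $\delta(\varphi_\ep,\psi_\ep)$ is a positive $\ep$-periodic subsolution of the stationary version of \eqref{eq:rapidosc} for $\delta$ sufficiently small, while a large constant pair $(M,M)$ with $M>\overline{K}$ is a supersolution (thanks to Proposition~\ref{prop:uniform-bound}). Because small solutions lie in the cooperative zone of \eqref{cooperative-zone}, a monotone iteration scheme produces a positive $\ep$-periodic stationary solution $(u^*_\ep,v^*_\ep)$ sandwiched between them. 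Uniform $C^{1,\alpha}$ elliptic estimates (in divergence form with $\sigma\in C^1$) give pre-compactness of the family $\{(u^*_\ep,v^*_\ep)\}_\ep$ in $C^0_{\mathrm{loc}}(\R)$, and any subsequential limit is a bounded positive stationary solution of the homogenized problem \eqref{eq:homogenized}; by Proposition~\ref{prop:longtime-ode}\,(i) applied as a Liouville-type statement, that limit must be the constant $(u^*,v^*)$, and uniqueness of the limit upgrades the convergence to the whole family.

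\textbf{Uniqueness and Part (iii).} The step I expect to be the main obstacle is global uniqueness of $(u^*_\ep,v^*_\ep)$, since the system lacks a comparison principle outside the cooperative zone. My plan is to leverage the proximity to the homogeneous case: the same compactness argument shows that any positive stationary solution of \eqref{eq:rapidosc} lies in an $o_\ep(1)$-$C^0$ neighborhood of the constant state $(u^*,v^*)$. Because the coefficients of the linearization of the reaction term at $(u^*,v^*)$ are cooperative, the full linearized periodic operator is cooperative, and its principal eigenvalue is close to the negative principal eigenvalue of the linearization of the ODE \eqref{eq:syst-ode} at $(u^*,v^*)$ (which is negative because $(u^*,v^*)$ is asymptotically stable by Proposition~\ref{prop:longtime-ode}\,(i)). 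Writing the difference of two positive stationary solutions as a solution of a linear cooperative elliptic system with coefficients close to the linearization at $(u^*,v^*)$, I would then use the Krein--Rutman maximum principle for such systems to force the difference to vanish. Once uniqueness is established, Part (iii) follows by a squeezing argument: Theorem~\ref{thm:hairtrigger} applied to \eqref{eq:rapidosc} gives, for any $c_1<c^*_{\ep,L}$, $c_2<c^*_{\ep,R}$, times after which the solution is trapped between a small multiple of $(\varphi_\ep,\psi_\ep)$ and the constant $\overline{K}$ on $[-c_1 t,c_2 t]$; the $\omega$-limit set in $C^0_{\mathrm{loc}}$ thus consists of bounded entire positive solutions of \eqref{eq:rapidosc}, and a Liouville-type argument along the lines of Theorem~\ref{thm:entire-sol-hom} (adapted to the $\ep$-periodic setting, using the already-proven uniqueness of the positive stationary state) reduces each such entire solution to $(u^*_\ep(x),v^*_\ep(x))$, yielding \eqref{uep(t,x)-to-u*ep}.
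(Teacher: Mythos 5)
Your Part (ii) argument — homogenize $k_\ep(\lambda)$ to $k_0(\lambda)=\overline{\sigma}^H\lambda^2+\lambda_A$ and use the uniform quadratic bounds to localize the minimizer in $\lambda$ — matches the paper (Lemmas~\ref{lem:homogenization-eigenproblem}--\ref{lem:convergence-speeds}). Your overall template of Liouville-type identification for Part (iii) is also in the spirit of the paper's Lemma~\ref{lem:rapid-osc-unique}. However, there are two genuine gaps, one in the existence step and one (more serious) in the uniqueness step.

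\textbf{Existence by monotone iteration does not go through.} You propose iterating between the subsolution $\delta(\varphi_\ep,\psi_\ep)$ and the supersolution $(M,M)$ with $M>\overline{K}$, invoking ``small solutions lie in the cooperative zone.'' But the order interval $[\delta(\varphi_\ep,\psi_\ep),(M,M)]$ is not contained in the cooperative zone \eqref{cooperative-zone}: near the top endpoint the off-diagonal terms $\mu_v(x)-\kappa_u(x)u$ and $\mu_u(x)-\kappa_v(x)v$ are negative, so the standard monotone iteration (which requires quasi-monotonicity on the whole order interval) fails. The paper avoids this by not constructing the stationary state directly: it produces a positive bounded entire solution via the hair-trigger effect and time compactness, and only \emph{afterwards} shows that every such entire solution must be a single stationary, $\ep$-periodic object (Lemma~\ref{lem:rapid-osc-unique}).

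\textbf{The Krein--Rutman uniqueness argument is based on a false premise.} You assert that ``the coefficients of the linearization of the reaction term at $(u^*,v^*)$ are cooperative.'' This is not true in general. From \eqref{eq:abcd}, the off-diagonal entries of the Jacobian at $(u^*,v^*)$ are $b=\mu_v-\kappa_u u^*$ and $c=\mu_u-\kappa_v v^*$, and Lemma~\ref{lem:stat-ode-uniqueness}\ref{item:lemestu^*-smallmu} shows that when $r_u-\mu_u>\mu_v$ one has $u^*>\mu_v/\kappa_u$, hence $b<0$ (similarly $c$ can be negative). So the linearized elliptic system need not be cooperative, Krein--Rutman for cooperative systems does not apply, and the difference of two stationary solutions cannot be handled by a comparison/maximum principle of that type. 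The paper's route is different and does not require cooperativity: it shows linear stability of $(u^*,v^*)$ for the homogenized problem by a direct spectral computation on $\mathrm{BUC}(\R)^2$ — using only $a<0$, $d<0$, $a+d<0$, $ad-bc>0$ (Lemma~\ref{lem:stat-ode-stability}) to locate $\sigma(\mathcal A)$ in a left half-plane (Lemma~\ref{lem:linear-stability}) — and then proves uniqueness for small $\ep$ by a rescaling/contradiction argument: take two bounded positive entire solutions for a sequence $\ep_n\to 0$, normalize their difference, homogenize, and obtain a bounded nontrivial entire solution of the homogenized linearized parabolic system, contradicting Lemma~\ref{lem:linear-stability}. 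You will need something of this kind rather than a cooperative Krein--Rutman argument.

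\textbf{Minor point on Part (iii).} You say uniqueness of the positive \emph{stationary} state, plus a Liouville adaptation, identifies every bounded positive entire solution with $(u^*_\ep,v^*_\ep)$. Be aware that the paper's Lemma~\ref{lem:rapid-osc-unique} actually proves uniqueness of bounded positive \emph{entire} solutions directly; stationarity and $\ep$-periodicity of the solution are \emph{consequences} of that uniqueness (via time and space translation invariance), not the other way around. Starting only from uniqueness of stationary states does not immediately rule out nonstationary bounded entire solutions.
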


\begin{figure}[H]
    \centering
    \includegraphics[bb = 0 0 118 131]{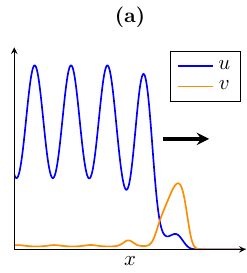} \hspace{1cm}
    \includegraphics[bb = 0 0 118 131]{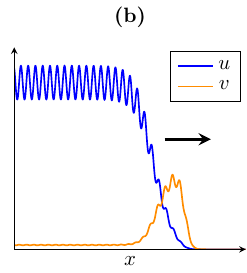}\hspace{1cm}
    \includegraphics[bb = 0 0 118 131]{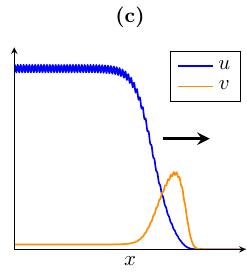}
\caption{Illustration of the homogenization process of 
{propagating} 
fronts. In each figure, the reproduction rates $r_u(x), r_v(x)$ are periodic functions of the form $r_i(x) = r_i+A_i\cos\big((x+\varphi_i)\times(2\pi)/L\big)$ for $i= u, v$, where $L$ is a varying parameter, and all other coefficients including $A_i$ are spatially homogeneous and fixed. $L$ is relatively large in \textbf{(a)}, smaller in \textbf{(b)} and very small in \textbf{(c)}. As $L$ decreases, the amplitude of oscillation of the front profiles becomes smaller and the shape ultimately converges to that of a traveling wave for the homogenized problem. }\label{fig:homogenization}
\end{figure}

\section{Proof of the mathematical results}
\label{s:proof}

\subsection{Principal eigenvalues of the linearized system}
\label{ss:principal}

In this section we focus on the principal eigenvalue problem for general cooperative elliptic systems with periodic coefficients. 
\begin{proof}[Proof of Proposition \ref{prop:k(lambda)}]
	Statement \ref{item:eigenpairk(lambda)} is a direct consequence of the Krein-Rutman Theorem. We concentrate on the remaining statements. \medskip

		{\bf Proof of Statement \ref{item:minimaxk(lambda)}.} 
		We prove the minimax formula \eqref{eq:minimax-k(lambda)}. 
		Let $(\varphi_\lambda,\psi_\lambda)$ denote a principal eigenfunction of \eqref{eq:lambda-periodic-principal-eigen}. Then
		\[
		\frac{L^1_\lambda[\varphi_\lambda, \psi_\lambda](x)}{\varphi_\lambda(x)}=\frac{L^1_\lambda[\varphi_\lambda, \psi_\lambda](x)}{\varphi_\lambda(x)}=k(\lambda). 
		\]
		Thus, using  $(\varphi_\lambda,\psi_\lambda)$ as a test function in \eqref{eq:minimax-k(lambda)}, we find that 
		\begin{equation*}
				k(\lambda)\geq k^*:=
				\underset{(\varphi, \psi)\in C^2_{per}(\R)^2}{\underset{{{\varphi}>{0}, \psi>0}}{\min}}\underset{x\in \R}{\sup}\, {\max}\left(\frac{L^1_\lambda[\varphi, \psi](x)}{\varphi(x)},\frac{L^2_\lambda[\varphi, \psi](x)}{\psi(x)}\right), 
		\end{equation*}
		
		Next let us show the converse inequality. Let $\ep>0$ be given, then by the definition of $k^*$ there exists  $(\varphi, \psi)$ such that 
		\begin{align*}
			\big(\sigma(x)\varphi_x\big)_x& -2\lambda \sigma(x)\varphi_x + \big(\lambda^2\sigma(x)-\lambda \sigma_x(x) +r_u(x)-\mu_u(x)\big)\varphi(x) + \mu_v(x)\psi(x)\\
			&\leq (k^*+\ep)\varphi(x), \\
			\big(\sigma(x)\psi_x\big)_x& -2\lambda \sigma(x)\psi_x + \big(\lambda^2\sigma(x)-\lambda \sigma_x(x) +r_v(x)-\mu_v(x)\big)\psi(x) + \mu_u(x)\varphi(x) \\
			&\leq (k^*+\ep)\psi(x). 
		\end{align*}
		Let $\mu^*>0$ be the largest constant such that $\mu^*\varphi_\lambda\leq \varphi$, $\mu^*\psi_\lambda\leq \psi$. Then there exists a point $x_0\in[0,L]$ such that either $\mu^*\varphi_\lambda$ is tangential to $\varphi$ from below at $x=x_0$ or $\mu^*\psi_\lambda$ is tangential to $\psi$ from below at $x=x_0$. In the former case we have $\mu^*(\varphi_\lambda)_{xx}(x_0)\leq \varphi_{xx}(x_0)$, $\mu^*(\varphi_\lambda)_x(x_0)=\varphi_x(x_0)$, $\mu^*\varphi_\lambda(x_0)=\varphi(x_0)>0$, while in the latter case, we have $\mu^*(\psi_\lambda)_{xx}(x_0)\leq \psi_{xx}(x_0)$, $\mu^*(\psi_\lambda)_x(x_0)=\psi_x(x_0)$, $\mu^*\psi_\lambda(x_0)=\psi(x_0)>0$. In either case, one can deduce from the above inequalities that ${k^*+\ep\geq k(\lambda)}$. Since $\ep>0$ is arbitrary, ${k^*\geq k(\lambda)}$.  Statement \ref{item:minimaxk(lambda)} is proved.\medskip

	{\bf Proof of Statement \ref{item:k(lambda)concave}.} 
	We first note that the analyticity of $k(\lambda)$ is classical. In the terminology of Kato \cite{Kat-95}, the family of unbounded operators in the left-hand side of \eqref{eq:lambda-periodic-principal-eigen} is a holomorphic family of type (A) \cite[Paragraph 2.1 on page 375]{Kat-95} and the principal eigenvalue is isolated in the spectrum by the Krein-Rutman Theorem; therefore the spectral projection and the principal eigenvalue are analytic (see  \cite[Remark 2.9 on page 379]{Kat-95}). The analyticity of $k(\lambda)$ with respect to $\lambda$  follows.

The convexity of $k(\lambda)$ can be established by following the proof of Nadin \cite[Proposition 2.10]{Nad-09} in the scalar case. For the sake of brevity, we omit the proof. Since $k(\lambda)$ is analytic and convex, it is either strictly convex everywhere, or is a linear function. The latter, however, is not possible by the inequality \eqref{k-quadratic}, which we will prove in the next paragraph. Hence $k(\lambda)$ is strictly convex.

Now we prove \eqref{k-quadratic}. 
Let $w(x):=\varphi(x)+\psi(x)$. Adding up the two equations in \eqref{eq:lambda-periodic-principal-eigen2}, we get
\begin{equation}\label{lambda-peri-w}
 \big(\sigma(x) w_{x}\big)_x-2\lambda \sigma(x) w_x+\big(\lambda^2\sigma(x)-\lambda\sigma_x(x)\big)w+r_u(x)\varphi+r_v(x)\psi=k(\lambda) w.
\end{equation}
Let us first prove the upper bound in \eqref{k-quadratic}. From the above equation we have
\[
k(\lambda)w^2 \leq \big(\sigma(x) w_{x}\big)_x w-2\lambda \sigma(x) w_x w+\big(\lambda^2\sigma(x)-\lambda\sigma_x(x) +r_{\max}\big)w^2.
\]
Integrating by parts and recalling the $L$-periodicity of the coefficients and $w$, we obtain
\[
k(\lambda)\int_0^L w^2  \dd x\leq -\int_0^L \sigma w_x^2 \dd x +\int_0^L \big(\lambda^2\sigma(x)+r_{\max}\big)w^2 \dd x
\leq \big(\sigma_{\max}\lambda^2+r_{\max}\big)\int_0^L w^2 \dd x.
\]
This proves the upper bound in \eqref{k-quadratic}. 
Next we prove the lower bound. From \eqref{lambda-peri-w} we get
\[
k(\lambda)\geq \frac{(\sigma(x) w_{x})_x}{w}-2\lambda\sigma(x)\frac{w_x}{w}+\lambda^2\sigma(x)-\lambda\sigma_x(x)+r_{\min}.
\]
Now we integrate the above inequality over $[0,L]$. First note that integration by parts gives
\[
\int_0^L \frac{(\sigma(x) w_{x})_x}{w}\dd x=\int_0^L \sigma(x)\left(\frac{w_x}{w}\right)^2 \dd x.
\]
Consequently,
\[
\begin{split}
k(\lambda) L &\geq \int_0^L \sigma(x)\left(\frac{w_x^2}{w^2}-2\lambda\frac{w_x}{w}+\lambda^2\right)\dd x +r_{\min} L=\int_0^L \sigma(x)\left(\frac{w_x}{w}-\lambda\right)^2 \dd x+r_{\min} L\\
&\geq \sigma_{\min}\int_0^L \left(\frac{w_x}{w}-\lambda\right)^2 \dd x+r_{\min} L
=\sigma_{\min}\int_0^L \left(\frac{w_x^2}{w^2}+\lambda^2\right) \dd x+r_{\min} L\\
&\geq \sigma_{\min}\lambda^2 L +r_{\min} L.
\end{split}
\]
This proves the lower bound of \eqref{k-quadratic}. The proof of Proposition~\ref{prop:k(lambda)} is complete.
\end{proof}

\begin{proof}[Proof of Theorem \ref{thm:comp-dir-per}]	
	 The following  proof is inspired by \cite[Theorem 2.11]{Nad-09} for the scalar case.

	We fix $\lambda\in\R$ and let $(\varphi, \psi)$ be the associated $\lambda$-periodic principal eigenvector. The functions $ \overline{\varphi}(x):=e^{-\lambda x}{\varphi}(x)$ and $ \overline{\psi}(x):=e^{-\lambda x}{\psi}(x)$  satisfy 
	\begin{align*}
	    \big(\sigma(x)\overline{\varphi}_x\big)_x + \big(r_u(x)-\mu_u(x)\big)\overline{\varphi}+\mu_v(x)\overline{\psi}&=k(\lambda) \overline{\varphi}(x), \text{ and }\\
	    \big(\sigma(x)\overline{\psi}_x\big)_x + \big(r_v(x)-\mu_v(x)\big)\overline{\psi}+\mu_u(x)\overline{\varphi}&=k(\lambda) \overline{\psi}.
	\end{align*}
    By comparing $\big(\overline{\varphi}, \overline{\psi}\big)$ to the Dirichlet principal eigenvector in $[-R, R]$ for $R>0$, we find that $k(\lambda)>\lambda_1^R$ for all $R>0$ and $\lambda\in\R $.

	Let us show that there exists $\lambda\in\R $ such that $\lim_{R\to+\infty}\lambda_1^R=k(\lambda)$. Let $(\varphi, \psi)$ be the locally uniform limit of a sequence of Dirichlet principal eigenpairs $(\varphi^R, \psi^R)$ with $R\to+\infty$, normalized with $\varphi^R(0)+\psi^R(0)=1$. Then $(\varphi, \psi)$ is positive and satisfies
	\[
	\begin{array}{l}
	    \big(\sigma(x){\varphi}_x\big)_x + \big(r_u(x)-\mu_u(x)\big){\varphi}+\mu_v(x){\psi}=\lambda_1^\infty \varphi , \\[4pt]
	    \big(\sigma(x){\psi}_x\big)_x + \big(r_v(x)-\mu_v(x)\big){\psi}+\mu_u(x){\varphi} =\lambda_1^\infty {\psi}
	\end{array}
	\ \ \  \hbox{on}\ \; \R ,
	\]
	where $\lambda_1^\infty:=\lim_{R\to+\infty} \lambda_1^R$.
	We let 
	\begin{equation*}
	    \tilde{\varphi}(x):=\frac{\varphi(x+L)}{\varphi(x)} \text{ and }\tilde{\psi}(x):=\frac{\psi(x+L)}{\psi(x)}.
	\end{equation*}
	Then, applying the Harnack inequality for fully coupled elliptic systems \cite[Theorem 8.2]{Bus-Sir-04}  to $(\tilde{\varphi}, \tilde{\psi})$, we see that the function $(\tilde{\varphi}, \tilde{\psi})(x)$ is uniformly bounded. We let 
	\begin{equation*}
	    m:=\sup_{x\in\R}\max\big(\tilde{\varphi}(x), \tilde{\psi}(x)\big)<+\infty.
	\end{equation*}
	Then there exists a sequence $(x_n)$ such that either $\tilde{\varphi}(x_n)\to m$ or $\tilde{\psi}(x_n)\to m$. 
	We define 
	\begin{align*}
	    {\varphi}^n(x)& :=\frac{1}{\varphi(x_n)}\varphi\left(x+L\left\lfloor\frac{x_n}{L}\right\rfloor\right), &  
	    {\psi}^n(x)& :=\frac{1}{\varphi(x_n)}\psi\left(x+L\left\lfloor\frac{x_n}{L}\right\rfloor\right), \\
	    \tilde{\varphi}^n(x)& :=\tilde{\varphi}\left(x+L\left\lfloor\frac{x_n}{L}\right\rfloor\right),  &
	    \tilde{\psi}^n(x)& :=\tilde{\psi}\left(x+L\left\lfloor\frac{x_n}{L}\right\rfloor\right), 
	\end{align*}
	where $\lfloor\cdot\rfloor$ denotes the integer part. 
	Applying the Harnack inequality for fully coupled elliptic systems, the sequences $\varphi^n$ and $\psi^n$ are locally bounded. Moreover, we have
	\begin{equation*}
	    \frac{\varphi^n(x+L)}{\varphi^n(x)} =  \frac{\varphi\left(x+L\left\lfloor\frac{x_n}{L}\right\rfloor+L\right)}{\varphi\left(x+L\left\lfloor\frac{x_n}{L}\right\rfloor\right)} = \tilde{\varphi}^n(x), 
	\end{equation*}
	and similarly
	\begin{equation*}
	    \frac{\psi^n(x+L)}{\psi^n(x)} =  \frac{\psi\left(x+L\left\lfloor\frac{x_n}{L}\right\rfloor+L\right)}{\psi\left(x+L\left\lfloor\frac{x_n}{L}\right\rfloor\right)} = \tilde{\psi}^n(x).
	\end{equation*}
Up to the extraction of subsequences, the sequences $\varphi^n$, $ \psi^n$, $\tilde{\varphi}^n$, and $\tilde{\psi}^n$,  converge locally uniformly to $\varphi^\infty$, $\psi^\infty$, $\tilde{\varphi}^\infty$ and $\tilde{\psi}^\infty$; and importantly, the supremum of $\max(\tilde{\varphi}^\infty, \tilde{\psi}^\infty)$ is attained on the interval $[0, L]$. We remark that 
	$\frac{\varphi^\infty(x+L)}{\varphi^\infty(x)} = \tilde{\varphi}^\infty(x)$, 
	$\frac{\psi^\infty(x+L)}{\psi^\infty(x)} = \tilde{\psi}^\infty(x)$, and that $(\tilde{\varphi}^\infty, \tilde{\psi}^\infty)$ solves 
	\begin{align*}
	    \big(\sigma(x)\tilde{\varphi}^\infty_x\big)_x+\sigma(x)\frac{\varphi^\infty_x}{\varphi^\infty}\tilde{\varphi}^\infty_x + \mu_v(x)\frac{\psi^\infty}{\varphi^\infty}\big(\tilde{\psi}^\infty-\tilde{\varphi}^\infty\big) &=0, \\
	    \big(\sigma(x)\tilde{\psi}^\infty_x\big)_x+\sigma(x)\frac{\psi^\infty_x}{\psi^\infty}\tilde{\psi}^\infty_x + \mu_u(x)\frac{\varphi^\infty}{\psi^\infty}\big(\tilde{\varphi}^\infty-\tilde{\psi}^\infty\big) &=0. 
	\end{align*}
	Consequently the functions $\Phi(x):=\tilde{\varphi}^\infty(x)-m$ and $\Psi(x):=\tilde{\psi}^\infty(x)-m$ satisfy
	\begin{align*}
	    \big(\sigma(x)\Phi_x\big)_x+\sigma(x)\frac{\varphi^\infty_x}{\varphi^\infty}\Phi_x + \mu_v(x)\frac{\psi^\infty}{\varphi^\infty}\big(\Psi-\Psi\big) &=0, \\
	    \big(\sigma(x)\Psi_x\big)_x+\sigma(x)\frac{\psi^\infty_x}{\psi^\infty}\Psi_x + \mu_u(x)\frac{\varphi^\infty}{\psi^\infty}\big(\Phi-\Psi\big) &=0. 
	\end{align*}
	This is a cooperative, fully coupled elliptic system. 
	Furthermore, $\Phi, \Psi\leq 0$ and either $\Phi$ or $\Psi$ attains its maximum somewhere. 
	Hence \cite[Proposition 12.1]{Bus-Sir-04} implies that 
	$\Phi(x)\equiv \Psi(x)\equiv 0$, that is, 
	$\tilde{\varphi}^\infty(x)\equiv m$ and $\tilde{\psi}^\infty(x)\equiv m$. Let 
	$\lambda:=\frac{1}{L}\ln(m)$,
	then we have
	\begin{equation*}
	    \varphi^\infty(x+L)=\tilde{\varphi}^\infty(x)\varphi^\infty(x)=m\varphi^\infty(x) = e^{\lambda L}\varphi^\infty(x) 
	    \text{ and } \psi^{\infty}(x+L) = e^{\lambda L} \psi^\infty(x), 
	\end{equation*}
	therefore the vector function $e^{-\lambda x}\big({\varphi}^\infty(x), {\psi}^\infty(x)\big)$ is $L$-periodic. Moreover it satisfies 
	\begin{align*}
	    \big(\sigma(x){\varphi}^\infty_x\big)_x& -2\lambda \sigma(x){\varphi}^\infty_x + \big(\lambda^2\sigma(x)-\lambda \sigma_x(x) +r_u(x)-\mu_u(x)\big){\varphi}^\infty(x) + \mu_v(x){\psi}^\infty(x) \\
		&=\lambda_1^\infty{\varphi}^\infty(x),  \\
	    \big(\sigma(x){\psi}^\infty_x\big)_x& -2\lambda \sigma(x){\psi}^\infty_x + \big(\lambda^2\sigma(x)-\lambda \sigma_x(x) +r_v(x)-\mu_v(x)\big){\psi}^\infty(x) + \mu_u(x){\varphi}^\infty(x)\\
		&=\lambda_1^\infty{\psi}^\infty(x). 
	\end{align*}
	Thus we have shown that $\lambda_1^\infty=k(\lambda)$ for some $\lambda\in\R $. This finishes the proof of Theorem \ref{thm:comp-dir-per}.
\end{proof}

\begin{proof}[Proof of Proposition \ref{prop:symm-case}]
	We first prove statement \ref{item:symmreflexion}. 
For each $\lambda\in\R $, let $\big(k(\lambda),(\varphi(x),\psi(x))\big)$ be a $\lambda$-periodic principal eigenpair of \eqref{eq:lambda-periodic-principal-eigen}; in other words, suppose that it satisfies \eqref{eq:lambda-periodic-principal-eigen2}. Then it is easily seen that $(\varphi(-x),\psi(-x))$ satisfies \eqref{eq:lambda-periodic-principal-eigen2} with $\lambda$ replaced by $-\lambda$ but with  the constant $k(\lambda)$ unchanged. This means that $\big(k(\lambda),(\varphi(-x),\psi(-x))\big)$ is a $\lambda$-periodic principal eigenpair of \eqref{eq:lambda-periodic-principal-eigen} for $-\lambda$, which shows that the equality $k(-\lambda)=k(\lambda)$ holds.

	Next we prove statement \ref{item:symmmatrix}. Since $\mu_u=\mu_v$, the joint operator $\big(L^1_{-\lambda}, L^2_{-\lambda}\big)^T$ is the formal adjoint of the operator $\big(L^1_{\lambda}, L^2_{\lambda}\big)^T$ for the canonical scalar product in $L^2_{per}(\R)^2$:
	\begin{equation*}
	    \big\langle \big(\varphi_1, \psi_1\big), \big(\varphi_2, \psi_2\big)\big\rangle=\int_0^L\varphi_1(x)\varphi_2(x)\dd x+\int_0^L\psi_1(x)\psi_2(x)\dd x.
	\end{equation*}
Hence, by the Fredholm alternative, $k(-\lambda)=k(\lambda)$. This completes the proof of Proposition~\ref{prop:symm-case}.
\end{proof}

\subsection{Analysis of propagation dynamics}
\label{ss:propagation-proof}

Here we first prove the basic boundedness estimate, Proposition~\ref{prop:uniform-bound}. 

\begin{proof}[Proof of Proposition~\ref{prop:uniform-bound}]
The nonnegativity of $u(x), v(x)$ follows easily from the maximum principle, since $\mu_u(x)\geq 0$, $\mu_v(x)\geq 0$. The details are omitted. 
Next let $w(t,x):=u(t,x)+v(t,x)$. Summing up the two equations in \eqref{eq:main-sys} yields
\[
\begin{split}
(u+v)_t&=\big(\sigma(x)(u+v)_x\big)_x+r_u(x)u+r_v(x)v-\big(\kappa_u(x)u+\kappa_v(x)v\big)\\
&\leq \big(\sigma(x)(u+v)_x\big)_x+r_{\max}(u+v)-\kappa_{\min}(u+v)^2.
\end{split}
\]
Therefore, $w$ satisfies
\begin{equation}\label{w}
w_t\leq \big(\sigma(x)w_x\big)_x+\big(r_{\max}-\kappa_{\min}w\big)w\quad (t>0,\ x\in\R).
\end{equation}
Let $W(t)$ be the solution of the following ODE problem:
\[
W_t=\big(r_{\max}-\kappa_{\min}W\big)W,\quad 
W(0)=\max\big(\overline{K},\, \sup_{x\in\R}(u_0(x)+v_0(x))\big),
\]
where $\overline{K}:=r_{\max}/\kappa_{\min}$. Then by the comparison principle we have $w(t,x)\leq W(t)$ for all $t\geq 0$, $x\in\R$. Furthermore, it is clear from the equation for $W$ that $W(t)$ is nonincreasing in $t$ and converges to $\overline{K}$ as $t\to+\infty$. The conclusion of the proposition then follows.
\end{proof}

Next we prove Theorems~\ref{thm:main-lindet} for the spreading speeds and Theorem~\ref{thm:hairtrigger} for the hair-trigger effect. 
In the proof of Theorem~\ref{thm:main-lindet}, we only consider the case where $(u_0,v_0)$ are right front-like and focus on the right spreading speed $c^*_{R}$, since the other case can be treated precisely the same way. 
Theorem~\ref{thm:main-lindet} follows as a direct consequence of  Lemma~\ref{lem:upper-spreading} and Lemma~\ref{lem:lower-spreading} below.

\begin{lem}[Upper spreading speed]\label{lem:upper-spreading}
    Let Assumption \ref{as:coop-comp} hold true. Let $\lambda>0$ be fixed and $\big(k(\lambda), (\varphi^\lambda(x), \psi^\lambda(x))  \big)$ be the associated $\lambda$-periodic principal eigenpair. Assume that, for some $\alpha>0$,
	\begin{equation}
		u_0(x)\leq \alpha e^{-\lambda x} \varphi^\lambda(x) \ \ \text{and}\ \  v_0(x)\leq \alpha e^{-\lambda x}\psi^{\lambda}(x) \ \ \text{for all}\ x\in\R . 
	\end{equation}
	Then if $c=\frac{k(\lambda)}{\lambda}$ we have
	\begin{equation}\label{eq:upper-spreading-comp}
		u(t, x)\leq \alpha e^{-\lambda (x-ct)} \varphi^\lambda(x) \ \ \text{and}\ \  v_0(x)\leq \alpha e^{-\lambda (x-ct)}\psi^{\lambda}(x) \ \ \text{for all}\ x\in\R \ \text{and}\  t>0. 
	\end{equation}
\end{lem}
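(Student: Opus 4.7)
The plan is a straightforward super/subsolution argument that exploits the following dichotomy: although the full nonlinear system \eqref{eq:main-sys} is not monotone, its linearization at $(0,0)$ \emph{is} cooperative, and the nonlinearity only contributes terms of a sign that make $(u,v)$ a subsolution of that linear system.

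First I would verify that the pair
$$
\bar u(t,x) := \alpha e^{-\lambda(x-ct)}\varphi^\lambda(x),\qquad \bar v(t,x) := \alpha e^{-\lambda(x-ct)}\psi^\lambda(x),
$$
with $c=k(\lambda)/\lambda$, is an exact classical solution of the linearized cooperative system \eqref{eq:linearized}. Indeed, $\bar u_t = \lambda c\,\bar u = k(\lambda)\bar u$, and writing $\bar u = \alpha e^{\lambda c t}\bigl(e^{-\lambda x}\varphi^\lambda\bigr)$, the linearity and time-independence of $L^1$ together with Definition~\ref{def:k(lambda)} give
$$
L^1[\bar u,\bar v] = \alpha e^{\lambda c t}\, L^1\bigl[e^{-\lambda x}\varphi^\lambda,\, e^{-\lambda x}\psi^\lambda\bigr] = \alpha e^{\lambda c t}\, k(\lambda)\,e^{-\lambda x}\varphi^\lambda = k(\lambda)\bar u,
$$
and analogously $L^2[\bar u,\bar v] = k(\lambda)\bar v$; hence $\bar u_t = L^1[\bar u,\bar v]$ and $\bar v_t = L^2[\bar u,\bar v]$ pointwise.

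Next I would observe that the solution $(u,v)$ of \eqref{eq:main-sys}, which is nonnegative by Proposition~\ref{prop:uniform-bound}, is a subsolution of the same linearized system. Rewriting \eqref{eq:main-sys} as $u_t = L^1[u,v]-\kappa_u(x)(u+v)u$ and $v_t = L^2[u,v]-\kappa_v(x)(u+v)v$, the subtracted terms are nonnegative, so $u_t\leq L^1[u,v]$ and $v_t\leq L^2[u,v]$. Setting $U:=\bar u-u$ and $V:=\bar v-v$, the pair $(U,V)$ therefore satisfies $U(0,\cdot)\geq 0$, $V(0,\cdot)\geq 0$ (by hypothesis) together with the cooperative linear parabolic inequalities
\begin{align*}
U_t &\geq \bigl(\sigma(x)U_x\bigr)_x + \bigl(r_u(x)-\mu_u(x)\bigr)U + \mu_v(x)V,\\
V_t &\geq \bigl(\sigma(x)V_x\bigr)_x + \bigl(r_v(x)-\mu_v(x)\bigr)V + \mu_u(x)U,
\end{align*}
whose off-diagonal coupling coefficients $\mu_v,\mu_u$ are nonnegative. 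The parabolic maximum principle for weakly coupled cooperative systems then yields $U\geq 0$ and $V\geq 0$ on $[0,\infty)\times\R$, which is exactly \eqref{eq:upper-spreading-comp}.

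The one point that requires care, and which I expect to be the main (though routine) obstacle, is that this comparison principle must be invoked on the whole line rather than on a bounded interval. Since $u,v$ are uniformly bounded on $[0,T]\times\R$ for any $T>0$ by Proposition~\ref{prop:uniform-bound}, while $\bar u$ and $\bar v$ grow at most like $e^{-\lambda x}$ times a bounded periodic factor as $x\to-\infty$, the differences $U,V$ lie in a class of functions with at most exponential growth in $x$. The comparison can then be obtained by a standard localization procedure: applying the cooperative maximum principle on bounded intervals $[-N,N]$ with a suitable barrier (for example a multiple of $e^{\beta(x^2+t)}$ for $\beta$ sufficiently large) to dominate any negative contributions at the boundary, and passing to the limit $N\to+\infty$. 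Once this technical step is handled, the lemma follows.
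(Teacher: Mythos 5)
Your proof is correct and takes exactly the same approach as the paper, which simply asserts that $\alpha e^{-\lambda(x-ct)}(\varphi^\lambda,\psi^\lambda)$ solves the cooperative linear system \eqref{eq:linearized} and is therefore a supersolution of \eqref{eq:main-sys} by sublinearity; you have merely spelled out the steps the paper leaves implicit. One small remark on the last technical point: since Proposition~\ref{prop:uniform-bound} already gives that $u,v$ are bounded on $[0,T]\times\R$, the differences $U,V$ are bounded from below on each such strip, so after the standard multiplication by $e^{-Kt}$ (with $K$ larger than the $L^\infty$-bound of the zeroth-order coefficients) the cooperative maximum principle applies directly without needing a Gaussian barrier; your barrier argument works as well, just slightly more machinery than is required.
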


\begin{proof}
The vector function $\alpha e^{-\lambda(x-ct)}\big(\varphi^\lambda(x), \psi^\lambda(x)\big)$ is an explicit solution to the linear system \eqref{eq:linearized},
as we mentioned in section~\ref{ss:principal} and also in \eqref{uv-pair}. 
Consequently, this vector function is a super solution to \eqref{eq:main-sys} since the nonlinearity of \eqref{eq:main-sys} is sublinear. 
This implies \eqref{eq:upper-spreading-comp}.
\end{proof}

Before stating the next result on the lower spreading speed, we introduce some key notations. Let 
\begin{equation}\label{K-beta}
    K:=\min\left(\inf_{x\in\R }\frac{\mu_v(x)}{\kappa_u(x)}, \inf_{x\in\R }\frac{\mu_u(x)}{\kappa_v(x)}\right) , \qquad 
    \beta:={2}\,\dfrac{\max\left(\sup_{x\in\R } r_u(x) , \sup_{x\in\R }r_v(x)\right)}{K}, 
\end{equation}
and let $\big(\tilde u(t, x), \tilde v(t, x)\big) $ denote a solution to the auxiliary system: 
	\begin{equation}\label{eq:auxiliary-below}
		\begin{system}
			\relax &\tilde u_t=\big(\sigma(x)\tilde u_{x}\big)_x+\big(r_u(x)-\kappa_u(x)(\tilde u+\tilde v)-\beta \tilde u\big)\tilde u+\mu_v(x)\tilde v-\mu_u(x)\tilde u, & t>0, x\in\R, \\
			\relax &\tilde v_t=\big(\sigma(x) \tilde v_{x}\big)_x+\big(r_v(x)-\kappa_v(x)(\tilde u+\tilde v)-\beta \tilde v\big)\tilde v+\mu_u(x)\tilde u-\mu_v(x)\tilde v, &t>0, x\in\R.
		\end{system}
	\end{equation}

\begin{lem}[Comparison with a lower barrier]\label{lem:comparison-below}                                                              Let Assumption \ref{as:coop-comp} hold true. Let $\tilde u_0(x) $ and $\tilde v_0(x)$ be continuous functions such that 
    \begin{equation}\label{eq:comparison-below-initcond}
		0\leq \tilde u_0(x)\leq\min\big(u_0(x), \frac{1}{2}K\big) \ \ \text{and}\ \  0\leq \tilde v_0(x)\leq \min\big(v_0(x), \frac{1}{2}K\big), 
	\end{equation}
	and let $\big(\tilde u(t, x), \tilde v(t, x)\big) $ be the solution of \eqref{eq:auxiliary-below} starting from $\tilde u(0, x)=\tilde u_0(x)$ and $\tilde v(0, x) = \tilde v_0(x)$. Then for all $t>0$ and $x\in\R $ we have
	\begin{equation}\label{tildeu<u}
		\tilde u(t, x) \leq u(t, x) \ \ \text{and}\ \ \tilde v (t, x) \leq v(t, x).
	\end{equation}
\end{lem}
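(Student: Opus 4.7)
The plan is to reduce \eqref{tildeu<u} to the comparison principle for a linear cooperative parabolic system satisfied by the differences $U:=u-\tilde u$ and $V:=v-\tilde v$. The crucial ingredient is that the extra dissipative terms $-\beta\tilde u^2$ and $-\beta\tilde v^2$ in \eqref{eq:auxiliary-below} are designed precisely so that $(\tilde u,\tilde v)$ remains inside the cooperative zone \eqref{cooperative-zone} at all times; once this invariance is available, the linear system for $(U,V)$ has nonnegative off-diagonal couplings and nonnegative source terms, and the standard cooperative comparison yields the result.

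\emph{Step 1 (invariance in the cooperative zone).} I would first prove the a priori bound $\tilde u(t,x)+\tilde v(t,x)\leq K$ for all $t\geq 0$ and $x\in\R$. To do so, set $w:=\tilde u+\tilde v$, sum the two equations in \eqref{eq:auxiliary-below} (the mutation terms cancel exactly), and use $\tilde u^2+\tilde v^2\geq w^2/2$ to reach the scalar differential inequality
\[
    w_t \leq \bigl(\sigma(x) w_x\bigr)_x + r_{\max}\, w - \frac{\beta}{2}\, w^2.
\]
Because $\beta = 2r_{\max}/K$, the associated logistic ODE $W'=r_{\max}W(1-W/K)$ has $W\equiv K$ as an equilibrium, and by \eqref{eq:comparison-below-initcond} we have $w(0,\cdot)\leq K/2+K/2 = K$. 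The parabolic maximum principle on $\R$, valid because all solutions involved are uniformly bounded, then yields $w(t,x)\leq K$. By the definition of $K$, this gives $\kappa_u(x)\tilde u(t,x)\leq \mu_v(x)$ and $\kappa_v(x)\tilde v(t,x)\leq \mu_u(x)$ pointwise.

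\emph{Step 2 (a cooperative linear system for $(U,V)$).} I would then compute the equation satisfied by $(U,V)$. The key algebraic move is an asymmetric splitting of $uv-\tilde u\tilde v$: in the difference of the first nonlinearities use $uv-\tilde u\tilde v = vU+\tilde u V$, and in the difference of the second use $uv-\tilde u\tilde v = uV+\tilde v U$. After regrouping, this produces
\begin{align*}
    U_t &= \bigl(\sigma(x) U_x\bigr)_x + a_{11}(t,x)\, U + \bigl(\mu_v(x)-\kappa_u(x)\tilde u\bigr) V + \beta\tilde u^2, \\
    V_t &= \bigl(\sigma(x) V_x\bigr)_x + a_{22}(t,x)\, V + \bigl(\mu_u(x)-\kappa_v(x)\tilde v\bigr) U + \beta\tilde v^2,
\end{align*}
with bounded diagonal coefficients $a_{11},a_{22}$, with both off-diagonal couplings nonnegative by Step 1, and with nonnegative source terms $\beta\tilde u^2,\beta\tilde v^2$. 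Since $U(0,\cdot),V(0,\cdot)\geq 0$ by \eqref{eq:comparison-below-initcond}, the weak maximum principle for cooperative linear parabolic systems with bounded coefficients on $\R$ then yields $U,V\geq 0$ for every $t>0$ and $x\in\R$, which is exactly \eqref{tildeu<u}.

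The main obstacle I expect is the asymmetric splitting used in Step 2: the naive symmetric choice $uv-\tilde u\tilde v = uV+\tilde v U$ in the first equation would produce the off-diagonal coefficient $\mu_v-\kappa_u u$, whose sign is not controlled because the super-solution $u$ may leave the cooperative zone of \eqref{eq:main-sys}. It is precisely the combination of the asymmetric splitting and the invariance from Step 1 that forces both off-diagonal coefficients to be nonnegative and converts the obstruction caused by the hybrid cooperative-competitive nature of \eqref{eq:main-sys} into a routine cooperative comparison argument; the dissipative penalty $-\beta \tilde u^2,-\beta \tilde v^2$ introduced in \eqref{eq:auxiliary-below} is the mechanism that makes Step 1 work in the first place.
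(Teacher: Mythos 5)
Your proof is correct and follows essentially the same two-step strategy as the paper: first establish the invariance $\tilde u+\tilde v\leq K$ by a scalar comparison on $w=\tilde u+\tilde v$, then show $(U,V)=(u-\tilde u,\,v-\tilde v)$ satisfies a cooperative linear system with nonnegative off-diagonal couplings $\mu_v-\kappa_u\tilde u$ and $\mu_u-\kappa_v\tilde v$ and nonnegative source terms $\beta\tilde u^2,\beta\tilde v^2$, so the maximum principle yields $U,V\geq 0$. Your explicit identification of the asymmetric splitting $uv-\tilde u\tilde v = vU+\tilde u V$ (versus $=uV+\tilde v U$ in the second component) is exactly the hidden step that makes the paper's displayed system cooperative, and your explanation of why the naive symmetric splitting fails is a helpful remark; incidentally, in the paper's displayed equations for $U_t$ and $V_t$ the off-diagonal coefficients read $\mu_u-\kappa_u\tilde u$ and $\mu_v-\kappa_v\tilde v$, which are typos for $\mu_v-\kappa_u\tilde u$ and $\mu_u-\kappa_v\tilde v$, as your computation correctly produces.
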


\begin{proof} 
We first show that $\tilde u+\tilde v\leq K$. 
Summing up the two equations in \eqref{eq:auxiliary-below} yields
\[
(\tilde u+\tilde v)_t \leq \big(\sigma(x)(\tilde u+\tilde v)_x\big)_x + r_u(x)\tilde u+r_v(x)\tilde v -\beta \left(\tilde u^2+\tilde v^2\right).
\]
Therefore, the function $\tilde{w}:=\tilde u+\tilde v$ satisfies
\begin{equation}\label{w-tilde}
\tilde{w}_t \leq \big(\sigma(x)\tilde{w}_x\big)_x + \max(\sup r_u, \sup r_v)\tilde{w} - \frac{\beta}{2} \tilde{w}^2= \big(\sigma(x)\tilde{w}_x\big)_x + \frac{\beta}{2}\left(K-\tilde{w}\right)\tilde{w}.
\end{equation}
Since $\tilde w(0,x)=\tilde u_0(x)+\tilde v_0(x)\leq K$, by the comparison principle we have $\tilde w(t,x)\leq K$. Hence
\begin{equation}\label{u+v<K}
\tilde{u}(t,x)+\tilde{v}(t,x)\leq K \quad \hbox{for all} \ \ t\geq 0, \ x\in\R .
\end{equation}
In particular, $\tilde{u}(t,x)\leq K$ and $\tilde{v}(t,x)\leq K$, which imply 
\begin{equation}\label{muv-kappau}
\mu_v(x) - \kappa_u(x)\tilde u(t,x)\geq 0\ \ \hbox{and}\ \ \mu_u(x) - \kappa_v(x)\tilde v(t,x)\geq 0
\ \ \hbox{for all}\ \ t>0, x\in\R .
\end{equation}

Now, in order to prove \eqref{tildeu<u}, we define $U:=u-\tilde u$, $V:=v-\tilde v$. Then a direct calculation shows
\[
\begin{split}
U_t & = \left(\sigma U_x\right)_x +\big((r_u-\mu_u)-\kappa_u(u+\tilde u+v)\big)U+(\mu_u-\kappa_u\tilde u)V+\beta \tilde u^2,\\
V_t & = \left(\sigma V_x\right)_x +\big((r_v-\mu_v)-\kappa_v(u+v+ \tilde v)\big)V+(\mu_v-\kappa_v\tilde v)U+\beta \tilde v^2,
\end{split}
\]
hence
\begin{equation}\label{UV-ineq}
\begin{split}
U_t & \geq \left(\sigma U_x\right)_x +\big((r_u-\mu_u)-\kappa_u(u+\tilde u+v)\big)U+(\mu_u-\kappa_u\tilde u)V,\\
V_t & \geq \left(\sigma V_x\right)_x +\big((r_v-\mu_v)-\kappa_v(u+v+ \tilde v)\big)V+(\mu_v-\kappa_v\tilde v)U.
\end{split}
\end{equation}
By virtue of the inequalities \eqref{muv-kappau}, the right-hand side of \eqref{UV-ineq} is a cooperative system. In view of this, and the fact that $U(0,x)=u_0(x)-\tilde u_0(x)\geq 0$, $V(0,x)=v_0(x)-\tilde v_0(x)\geq 0$, we obtain $U(t,x)\geq 0$, $V(t,x)\geq 0$ for all $t\geq 0$ and $x\in\R $, which implies \eqref{tildeu<u}. The lemma is proved.
\end{proof}

Note that, by the inequality \eqref{muv-kappau}, $\left(\tilde u(t, x),\tilde v(t, x)\right)$ can be regarded as a solution of the following system so long as the initial data satisfies $\tilde{u}_0(x)+\tilde{v}_0(x)\leq K$.
\begin{equation}\label{eq:auxiliary-below-2}
	\left\{\begin{aligned}\relax
		&\tilde u_t = (\sigma(x)\tilde u_x)_x + \big(r_u(x)-\mu_u(x)-(\kappa_u+\beta)\tilde u\big)\tilde u + \tilde v \big(\mu_v(x)-\kappa_u(x) \tilde u\big)_+, \\ 
		&\tilde v_t = (\sigma(x)\tilde v_x)_x +\big(r_v(x)-\mu_v(x)-(\kappa_v+\beta)\tilde v\big) \tilde v + \tilde u \big(\mu_u(x)-\kappa_v(x) \tilde v\big)_+.
	\end{aligned}\right.
\end{equation}	
This is a cooperative system, therefore the comparison principle holds. 

\begin{lemma}[Spreading properties of \eqref{eq:auxiliary-below}]\label{lem:lower-spreading}
The system \eqref{eq:auxiliary-below} possesses an $L$-periodic positive stationary solution $\big(p(x),q(x)\big)$ satisfying $p(x)+q(x)\leq K$ 
with the following properties.
\begin{itemize}
\item[{\rm (i)}] For any solution $\big(\tilde u(t, x),\tilde v(t, x)\big)$ of \eqref{eq:auxiliary-below} whose initial data  $\big(\tilde u_0(x),\tilde v_0(x)\big)$ is $L$-periodic and satisfies $0<\tilde{u}_0(x)\leq p(x)$, $0<\tilde{v}_0(x)\leq q(x)$, it holds that
\begin{equation}\label{u-to-p}
\lim_{t\to\infty}\tilde u(t, x)=p(x),\quad \lim_{t\to\infty}\tilde v(t, x)=q(x)\quad\ 
\hbox{uniformly on}\ \ \R .
\end{equation}
\item[{\rm (ii)}] For any solution $\big(\tilde u(t, x),\tilde v(t, x)\big)$ of \eqref{eq:auxiliary-below} whose initial data  $\big(\tilde u_0(x),\tilde v_0(x)\big)$ is right front-like and satisfies $0\leq\tilde{u}_0(x)\leq p(x)$, $0\leq\tilde{v}_0(x)\leq q(x)$, it holds that
\begin{equation}\label{tilde-u-propagation}
		\lim_{t\to+\infty} \sup_{x\leq ct} \big(|\tilde u(t, x)-p(x)|+|\tilde v(t, x)-q(x)|\big)=0,\quad
		\hbox{for every}\ \ c<c^*_{R},
\end{equation}
where $c^*_{R}$ is the right spreading speed defined in \eqref{eq:speed}.
\end{itemize}
\end{lemma}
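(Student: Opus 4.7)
The strategy is to exploit the remark immediately following Lemma~\ref{lem:comparison-below}: on the invariant region $\{\tilde u,\tilde v\geq 0,\ \tilde u+\tilde v\leq K\}$, system \eqref{eq:auxiliary-below} coincides with the strongly coupled \emph{cooperative} system \eqref{eq:auxiliary-below-2} (the positive-part truncations are inactive by the choice of $K$). On this region \eqref{eq:auxiliary-below-2} defines a strongly monotone parabolic semiflow with $L$-periodic coefficients, to which the classical theory of cooperative KPP-type systems applies. Its linearization at $(0,0)$ is precisely \eqref{eq:linearized}, so the standing hypothesis $\lambda_1^{per}>0$ of Theorem~\ref{thm:main-lindet} is inherited.

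To construct $(p,q)$ I would use monotone iteration. For $\varepsilon>0$ small enough, $\varepsilon(\varphi^{per},\psi^{per})$ is a strict $L$-periodic subsolution of \eqref{eq:auxiliary-below-2}, because the principal-eigenvalue term $\lambda_1^{per}\varepsilon(\varphi^{per},\psi^{per})$ dominates the $O(\varepsilon^2)$ quadratic correction, while the constant pair $(K,K)$ is a supersolution by the choice of $\beta$ in \eqref{K-beta}. The orbit of \eqref{eq:auxiliary-below-2} starting from $\varepsilon(\varphi^{per},\psi^{per})$ is then nondecreasing in $t$, $L$-periodic in $x$ and bounded by $(K,K)$, so by standard parabolic estimates it converges uniformly to an $L$-periodic positive stationary solution $(p,q)$ with $p+q\leq K$, which is therefore also a stationary solution of \eqref{eq:auxiliary-below}. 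Uniqueness of the positive $L$-periodic steady state in the cooperative region follows from a sweeping argument: given a second solution $(p',q')$, the supremum $\gamma_*:=\sup\{\gamma>0\,:\,\gamma(p,q)\leq(p',q')\}$ cannot be strictly less than $1$, for otherwise the strict sublinearity provided by the $-\beta u^2$ and $-\beta v^2$ terms, combined with the strong maximum principle for fully coupled cooperative systems (e.g.\ \cite{Bus-Sir-04}), would force a strict inequality at the touching point.

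Part (i) then follows by squeezing. For an $L$-periodic initial datum with $0<\tilde u_0\leq p$ and $0<\tilde v_0\leq q$, the orbit stays in $[(0,0),(p,q)]$ by comparison. The strong maximum principle yields $t_0,\delta>0$ such that $(\tilde u(t_0,\cdot),\tilde v(t_0,\cdot))\geq\delta(\varphi^{per},\psi^{per})$; the forward orbit from $\delta(\varphi^{per},\psi^{per})$ is nondecreasing and the one from $(p,q)$ is constant (and everything else sits in between by monotonicity), so both converge to $L$-periodic positive stationary solutions, which by the uniqueness just established must both equal $(p,q)$. This yields \eqref{u-to-p}.

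For part (ii), I would invoke the spreading-speed theory for periodic cooperative parabolic systems developed by Weinberger~\cite{Wei-02} and Liang-Zhao~\cite{Lia-Zha-07, Lia-Zha-10}, applied to the monotone semiflow generated by \eqref{eq:auxiliary-below-2} on the order interval $[(0,0),(p,q)]$. Linear determinacy holds because the linearization at the unstable equilibrium $(0,0)$ is \eqref{eq:linearized}, whose $\lambda$-periodic principal eigenvalue is $k(\lambda)$; the abstract formula therefore produces the rightward spreading speed $\inf_{\lambda>0}k(\lambda)/\lambda=c^*_R$, together with convergence to $(p,q)$ on expanding intervals $x\leq ct$ for every $c<c^*_R$. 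The main obstacle is the careful verification that the hypotheses of these abstract theorems are met in our periodic, fully-coupled, two-species setting, and especially that their conclusion can be sharpened to the full convergence \eqref{tilde-u-propagation} to $(p(x),q(x))$ rather than merely uniform positivity behind the front; this last upgrade is done by combining the spreading lower bound with the local attractivity of $(p,q)$ from part (i), applied to the locally $L$-periodic structure inherited by the solution once it has entered a neighborhood of $(p,q)$.
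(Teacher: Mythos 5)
Your proposal follows essentially the same path as the paper's proof: reduce on the invariant region $\{\tilde u+\tilde v\leq K\}$ to the cooperative system \eqref{eq:auxiliary-below-2}, build $(p,q)$ by monotone iteration from a small multiple of $(\varphi^{per},\psi^{per})$, squeeze for part~(i), and invoke the Weinberger/Liang--Zhao abstract spreading theory for part~(ii). The paper records the link to Weinberger via the two-line habitat $\mathcal H=\R\times\{0,1\}$ and the time-$1$ map, obtaining \eqref{tilde-u-propagation} directly from the monotone abstract theorem together with \eqref{u-to-p}; your final paragraph arrives at the same place by a slightly more roundabout "spread, then locally attract" combination, but this is not a conceptual departure.

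One concrete slip: you assert that ``the constant pair $(K,K)$ is a supersolution by the choice of $\beta$ in \eqref{K-beta}.'' That is not true in general. The choice of $\beta$ guarantees only that the \emph{sum} $\tilde w=\tilde u+\tilde v$ satisfies
\[
\tilde w_t\leq \big(\sigma\tilde w_x\big)_x+\tfrac{\beta}{2}\,(K-\tilde w)\,\tilde w,
\]
which makes the simplex $\{\tilde u\geq0,\tilde v\geq0,\tilde u+\tilde v\leq K\}$ positively invariant; it does \emph{not} make the box $[0,K]^2$ invariant. For instance with $r_u=1$, $r_v=-1$, $\mu_u=0.1$, $\mu_v=100$, $\kappa_u=\kappa_v=1$ one has $K=0.1$, $\beta K=2$, while the first component of the reaction evaluated at $(K,K)$ equals $K\,(r_u-\mu_u+\mu_v-2\kappa_u K-\beta K)=K\cdot 98.7>0$, so $(K,K)$ is a strict \emph{sub}solution there, even though $\lambda_A>0$. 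The fix is simply to use the $\tilde w$-comparison to bound the monotone orbit inside the simplex and deduce $p+q\leq K$ (which is exactly what the paper does). Your stated conclusion $p+q\leq K$ cannot follow from the mere box bound $p\leq K$, $q\leq K$ anyway, so you need this estimate regardless. Once this is patched the rest of your argument goes through; the sweeping-type uniqueness of the positive $L$-periodic steady state is a slight strengthening of what the paper actually proves (it only shows the limit is independent of $\ep$), but it is correct and serves the same purpose.
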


\begin{proof}
As we have shown in the proof of Lemma~\ref{lem:comparison-below}, there is no distinction between the solutions of \eqref{eq:auxiliary-below} and those of \eqref{eq:auxiliary-below-2} so long as the initial data satisfies $\tilde u_0(x)+\tilde v_0(x)\leq K$, thanks to the inequality \eqref{u+v<K}. 
Since \eqref{eq:auxiliary-below-2} is a cooperative system, the comparison principle holds for such solutions. Note that the linearized system for \eqref{eq:auxiliary-below} is \eqref{eq:linearized}, the same as that for \eqref{eq:main-sys}. Hence the principal eigenvalues $\lambda_1^{per}$, $\lambda_1^R$, $k(\lambda)$ associated with \eqref{eq:auxiliary-below} are identical to those associated with \eqref{eq:main-sys}.

Let us first prove the existence of the periodic stationary solution $(p,q)$ with the property (i). Thus, for the moment, we focus on solutions of \eqref{eq:auxiliary-below} whose initial data $(\tilde u_0(x),\tilde v_0(x))$ is $L$-periodic.   
Since $\lambda_1^{per}>0$, there exists $\ep_0>0$ such that, for any $\ep\in(0,\ep_0]$, the pair $\big(\ep\varphi^{per},\ep\psi^{per}\big)$ is a strict subsolution of \eqref{eq:auxiliary-below}, where $(\varphi^{per},\psi^{per})$ is the principal eigenvector of the problem \eqref{eq:periodic-principal-eigen}. 
We choose $\ep_0$ sufficiently small if necessary, so that $\ep_0\varphi^{per}(x)+\ep_0\psi^{per}(x)\leq K$.  
Let $\big(u^{\ep}(t,x),v^{\ep}(t,x)\big)$ denote the solution of \eqref{eq:auxiliary-below} whose initial data is $\big(\ep\varphi^{per},\ep\psi^{per}\big)$. Then, since $\big(u^{\ep}(t,x),v^{\ep}(t,x)\big)$ is also a solution of \eqref{eq:auxiliary-below-2}, which is a cooperative system, this solution is strictly monotone increasing in $t$. Moreover it is bounded from above by the inequality \eqref{u+v<K}. Hence it converges to an $L$-periodic stationary solution $\big(p(x),q(x)\big)$ as $t\to+\infty$. 
Note that we have 
\[
p(x)>\ep_0\varphi^{per}(x), \quad q(x)>\ep_0\psi^{per}(x),
\]
since otherwise $p$ (or $q$) would have to be tangential to $\ep_1\varphi^{per}$ (or $\ep_1\psi^{per}$) from above for some $0<\ep_1\leq \ep_0$, but this is impossible by the strong maximum principle and the fact that $\big(\ep_1\varphi^{per},\ep_1\psi^{per}\big)$ is a strict subsolution.
Consequently, the limit stationary solution $\big(p(x),q(x)\big)$ does not depend on the choice of $\ep\in(0,\ep_0]$. 

Now let $\big(\tilde u_0(x),\tilde v_0(x)\big)$ be any $L$-periodic initial data that satisfies $0<\tilde{u}_0(x)\leq p(x)$, $0<\tilde{v}_0(x)\leq q(x)$. Then there exists $\ep>0$ such that $\ep\varphi^{per}\leq \tilde u_0$, $\ep\psi^{per}\leq \tilde v_0$. Since the solution of \eqref{eq:auxiliary-below} with initial data $\big(\ep\varphi^{per},\ep\psi^{per}\big)$ converges to $(p,q)$ as $t\to+\infty$, we see, by the comparison principle, that the same holds for the solution with initial data $(\tilde u_0,\tilde v_0)$, which proves \eqref{u-to-p}.

Next we prove statement (ii). This is actually a direct consequence of the result of Weinberger \cite{Wei-02}, after adapting our problem to make it fit into the scalar framework used in the paper. 
The paper deals with propagation dynamics of a system defined by a rather abstract order-preserving real-valued operator $Q$ defined on a close set ${\mathcal H}\subset \R ^d$. To make this result applicable to our vector-valued system, 
we rewrite our system \eqref{eq:auxiliary-below} as a nonlocal scalar equation defined on 
$\mathcal H:=\R\times \{0, 1\}\subset \R^2$, which represents two parallel straight lines.  We remark that any continuous vector function $\big(\tilde u(t,x), \tilde v(t,x)\big)$ can be represented as a scalar function $w:\R\times\mathcal H\to \R$ by letting $w(t, x, 0)=\tilde u(t, x)$ and $w(t, x, 1)=\tilde v(t, x)$. 
Therefore our system can be regarded as a scalar system on the habitat $\mathcal H$. If we define the operator $Q$ as the time-1 map of the system \eqref{eq:auxiliary-below}:
\[
Q:\big(\tilde u_0(x),\tilde v_0(x)\big) \mapsto \big(\tilde u(1,x),\tilde v(1,x)\big),
\]
then it is not difficult to see that the assumptions of \cite[Theorem 2.1]{Wei-02} are all fulfilled, thanks, in particular, to the property \eqref{u-to-p}. 
The fact that the right spreading speed of $\big(\tilde u,\tilde v\big)$ coincides with the value $c^*_{R}$ in \eqref{eq:speed} follows from \cite[Corollary 2.1]{Wei-02} and the fact that the $\lambda$-principal eigenvalues $k(\lambda)$ for \eqref{eq:auxiliary-below} are identical to those for \eqref{eq:main-sys}. 
(We remark that the same conclusion also follows from the abstract results of Liang and Zhao \cite[Theorems 2.11, 2.15, 3.10]{Lia-Zha-07}). 
The Lemma is proved.
\end{proof}

\begin{proof}[Proof of Theorem \ref{thm:main-lindet}]
Since the assertions \eqref{right-spreading} and \eqref{left-spreading} can be shown precisely the same way by simply reversing the direction of $x$-axis, we only prove the former.

The second assertion of \eqref{right-spreading} is a consequence of Lemma~\ref{lem:upper-spreading}. 
The first assertion of \eqref{right-spreading} follows from the inequalities \eqref{tildeu<u} and Lemma~\ref{lem:lower-spreading} (ii). 
Finally, the ``$\inf$'' in \eqref{eq:speed} can be replaced by ``$\min$'', since $k(0)=\lambda^{per}>0$ and $k(\lambda)$ grows quadratically by virtue of \eqref{k-quadratic}.  
The Theorem is proved.
\end{proof}

\begin{proof}[Proof of Proposition~\ref{prop:c*-estimate}] 
We only prove the assertion for $c^*_R$, as the proof for $c^*_L$ is precisely the same. By \eqref{k-quadratic},
\[
c^*_R=\min_{\lambda>0}\frac{k(\lambda)}{\lambda}\leq \min_{\lambda>0}\left(\sigma_{\max}\lambda+\frac{r_{\max}}{\lambda}\right)=2\sqrt{\sigma_{\max}r_{\max}}.
\]
Next, assume $r_{\min}>0$. 
Let the above minimum of $k(\lambda)/\lambda$ is attained at $\lambda=\lambda_0>0$. Then
\[
c^*_R=\frac{k(\lambda_0)}{\lambda_0}\geq \sigma_{\min}\lambda_0+\frac{r_{\min}}{\lambda_0}\geq
\min_{\lambda>0}\left(\sigma_{\min}\lambda+\frac{r_{\min}}{\lambda}\right)=2\sqrt{\sigma_{\min}r_{\min}}.
\]
This completes the proof of the proposition.
\end{proof}

\begin{proof}[Proof of Theorem~\ref{thm:hairtrigger} (hair-trigger effect)]
Let us first prove that the conditions (a), (b), (c) are equivalent. The equivalence ${\rm (a)}\Leftrightarrow{\rm (b)}$ is already implied in \eqref{lambdaR-k}, since $\lambda_1^R$ is strictly increasing in $R$. The assertion ${\rm (b)}\Rightarrow{\rm (c)}$ is also clear since $k(\lambda)$ is convex. Now assume that (c) holds. Then by the formula \eqref{eq:speed}, we have $k(\lambda)>0$ for $\lambda>0$ and also for $\lambda<0$. It remains to show that $k(0)>0$. Assume by contradiction that $k(0)=0$. This means that $k(0)=k'(0)=0$. Then we have
\[
c^*_{R}=\min_{\lambda>0}\frac{k(\lambda)}{\lambda}=k'(0)=0,\quad c^*_{L}=\min_{\lambda<0}\frac{k(\lambda)}{-\lambda}=-k'(0)=0,
\]
contradicting the assumption (c). This contradiction proves that ${\rm (c)}\Rightarrow{\rm (b)}$ holds. The equivalence of (a), (b), (c) is proved.

Next we prove \eqref{eq:hairtrigger-below}. Actually this statement follows from \eqref{right-spreading} and \eqref{left-spreading}, but since the proof of the latter two statements relies on Theorem~\ref{thm:main-lindet}, we give a much simpler direct proof of \eqref{eq:hairtrigger-below}. By Lemma~\ref{lem:comparison-below}, it suffices to prove the claim for solutions of \eqref{eq:auxiliary-below}.

Choose a large enough $R>0$ such that $\lambda_1^R>0$ and that $R\geq L$, and consider the system \eqref{eq:auxiliary-below} on the interval $[-R,R]$ under the Dirichlet boundary conditions at $x=\pm R$, namely
\begin{equation}\label{eq:auxiliary-below-R}
		\begin{system}
			\relax &\tilde u_t=\big(\sigma\tilde u_{x}\big)_x+\big(r_u-\kappa_u(\tilde u+\tilde v)-\beta \tilde u\big)\tilde u+\mu_v\tilde v-\mu_u\tilde u, & t>0, x\in(-R,R), \\
			\relax &\tilde v_t=\big(\sigma \tilde v_{x}\big)_x+\big(r_v-\kappa_v(\tilde u+\tilde v)-\beta \tilde v\big)\tilde v+\mu_u\tilde u-\mu_v\tilde v, &t>0, x\in(-R,R),\\
			\relax &\tilde u(t,-R)=\tilde u(t,R)=0,\ \ \ \tilde v(t,-R)=\tilde v(t,R)=0, & t>0.
		\end{system}
\end{equation}
As in the case of \eqref{eq:auxiliary-below}, for any solution of \eqref{eq:auxiliary-below-R} whose initial data satisfies $\tilde u_0(x)+\tilde v_0(x)\leq K$, the inequality \eqref{u+v<K} holds on the interval $[-R,R]$, therefore the comparison principle holds among such solutions of \eqref{eq:auxiliary-below-R}. 
Since $\lambda_1^{R}>0$, there exists $\ep_0>0$ such that, for any $\ep\in(0,\ep_0]$, the pair $\big(\ep\varphi^R,\ep\psi^R\big)$ is a strict subsolution of \eqref{eq:auxiliary-below-R}, where $(\varphi^R,\psi^R)$ denotes the principal eigenvector of the problem \eqref{eq:Dirichlet-principal-eigen}. We choose $\ep_0$ small enough so that $\ep_0\varphi^{R}(x)+\ep_0\psi^{R}(x)\leq K$.  
Let $\big(u^{\ep}(t,x),v^{\ep}(t,x)\big)$ denote the solution of \eqref{eq:auxiliary-below-R} whose initial data is $\big(\ep\varphi^R,\ep\psi^R\big)$. Then, by the comparison principle, this solution is strictly monotone increasing in $t$ and is bounded from above by the inequality \eqref{u+v<K}. Hence it converges to a stationary solution $\big(P^R(x),Q^R(x)\big)$ as $t\to+\infty$. 
Note that we have 
\[
P^R(x)>\ep_0\varphi^{R}(x), \quad Q^R(x)>\ep_0\psi^{R}(x),
\]
since otherwise $P^R$ (or $Q^R$) has to be tangential to $\ep_1\varphi^{R}$ (or $\ep_1\psi^{R}$) from above for some $0<\ep_1\leq \ep_0$, but this is impossible by the strong maximum principle, Hopf boundary lemma and the fact that $\big(\ep_1\varphi^{R},\ep_1\psi^{R}\big)$ is a strict subsolution of the system \eqref{eq:auxiliary-below-R}. 
Consequently, the limit stationary solution $\big(P^R(x),Q^R(x)\big)$ does not depend on the choice of $\ep\in(0,\ep_0]$.  

Now let $\big(\tilde u(t,x),\tilde v(t,x)\big)$ be any solution of \eqref{eq:auxiliary-below} whose initial data is nonnegative, nontrivial and satisfies $\tilde u_0(x)+\tilde v_0(x)\leq K$. Fix $\tau>0$. Then $\tilde u(\tau,x)>0$, $\tilde v(\tau,x)>0$ for all $x\in\R $, hence $\tilde u(\tau,x)\geq \ep\varphi^R(x)$, $\tilde v(\tau,x)\geq \ep\psi^R(x)$ on $[-R,R]$ for some $\ep\in(0,\ep_0]$. By the comparison principle,
\[
\tilde u(t+\tau,x)\geq \tilde u^\ep(t,x),\ \  \tilde v(t+\tau,x)\geq \tilde v^\ep(t,x)\quad \hbox{for all}\ \ t>0.\ x\in[-R,R],
\] 
where $\big(u^{\ep}(t,x),v^{\ep}(t,x)\big)$ denote the solution of \eqref{eq:auxiliary-below-R} whose initial data is $\big(\ep\varphi^R,\ep\psi^R\big)$. Letting $t\to+\infty$, we obtain
\[
\liminf_{t\to+\infty}\tilde u(t,x)\geq P^R(x),\quad 
\liminf_{t\to+\infty}\tilde v(t,x)\geq Q^R(x),\quad \hbox{for}\ \ x\in[-R,R]. 
\]
Replacing the interval $[-R,R]$ by $[-R+kL, R+kL]$ ($k\in{\mathbb Z}$) and repeating the same argument, we  see that the following estimate holds for all $k\in{\mathbb Z}$:
\begin{equation}\label{u>P}
\liminf_{t\to+\infty}\tilde u(t,x)\geq P^R(x+kL),\ \ 
\liminf_{t\to+\infty}\tilde v(t,x)\geq Q^R(x+kL),\ \  \hbox{for}\ \ x\in[-R+kL,R+kL]. 
\end{equation}
Since $R\geq L$, the family of intervals $[-R+kL,R+kL]\,(k\in{\mathbb Z})$ covers the entire $x$-axis with much overlapping. Therefore, \eqref{u>P} gives a uniform positive lower bound. The assertion \eqref{eq:hairtrigger-below} is proved.

Next we prove the second part of the theorem. As mentioned before, we only prove \eqref{right-spreading2}, since \eqref{left-spreading2} can be shown precisely the same way by simply reversing the direction of the $x$-axis. By what we have just shown above, the following inequalities hold:
\[
\liminf_{t\to+\infty}\tilde u(t,0)\geq P^R(0),\quad 
\liminf_{t\to+\infty}\tilde v(t,0)\geq Q^R(0).
\]
Fix a constant $m$ satisfying $0<m<\min(P^R(0),Q^R(0))$. Then there exists $T>0$ such that
\begin{equation}\label{u>m}
\tilde u(t,0)>m,\ \  \tilde v(t,0)>m\quad \hbox{for all}\ \ t\geq T.
\end{equation} 
Fix such $T>0$. Note that, since $P^R+Q^R\leq K$, we have $m<K/2$.

Next we consider another auxiliary system of the form
	\begin{equation}\label{eq:auxiliary-below-3}
		\begin{system}
			\relax &{\widehat{u}}_t=\big(\sigma(x)\widehat{u}_{x}\big)_x+\big(r_u(x)-\kappa_u(x)(\widehat{u}+\widehat{v})-\beta' \widehat{u}\big)\widehat{u}+\mu_v(x)\widehat{v}-\mu_u(x)\widehat{u}, & t>0, x\in\R, \\
			\relax &\widehat{v}_t=\big(\sigma(x) \widehat{v}_{x}\big)_x+\big(r_v(x)-\kappa_v(x)(\widehat{u}+\widehat{v})-\beta' \widehat{v}\big)\bar v+\mu_u(x)\widehat{u}-\mu_v(x)\widehat{v}, &t>0, x\in\R,
		\end{system}
	\end{equation}
where the constant $\beta'$ is given by
\[
\beta':=\frac{K}{m}\beta,
\]
with $K$ and $\beta$ being the constants defined in \eqref{K-beta}. This system is obtained by replacing the constant $\beta$ in \eqref{eq:auxiliary-below} by $\beta'$. By using an argument similar to \eqref{w-tilde}, we see that $\widehat{w}(t,x):=\widehat{u}(t,x)+\widehat{v}(t,x)$ satisfies
\[
\widehat{w}_t \leq \big(\sigma(x)\widehat{w}_x\big)_x + \frac{\beta K}{2}\widehat{w} - \frac{\beta'}{2} \widehat{w}^2= \big(\sigma(x)\widehat{w}_x\big)_x + \frac{\beta K}{2m}\left(m-\widehat{w}\right)\widehat{w}.
\]
Therefore, if the initial data of the solution of \eqref{eq:auxiliary-below-3} satisfies 
\begin{equation}\label{w0<m}
\widehat{w}(0,x):=\widehat{u}_0(x)+\widehat{v}_0(x)\leq m,
\end{equation}
then $\widehat{w}(t,x):=\widehat{u}(t,x)+\widehat{v}(t,x)\leq m$ for all $t\geq 0$, $x\in\R $. In particular, we have
\begin{equation}\label{u-hat(0)}
\widehat{u}(t,0)\leq m,\ \ \widehat{v}(t,0)\leq m\quad \hbox{for all}\ \ t\geq 0, 
\end{equation}
provided that \eqref{w0<m} holds. Now we consider a solution $\big(\widehat{u}(t,x),\widehat{v}(t,x)\big)$
whose initial data $\big(\widehat{u}_0,\widehat{v}_0\big)$ satisfies \eqref{w0<m} and is left front-like in the sense that 
\[
\inf_{x\leq K_1} \min\big(\widehat{u}_0(x),\widehat{v}_0(x)\big)>0\ \ \hbox{for some}\ K_1<0,\quad \widehat{u}_0(x)=\widehat{v}_0(x)=0\ \ \hbox{for all}\ x\geq 0.
\]
Since $\beta'>\beta$, this is a subsolution of the system \eqref{eq:auxiliary-below}. We claim that 
\begin{equation}\label{u-hat<u-tilde}
\widehat{u}(t,x) < \tilde{u}(t+T,x),\ \ \widehat{v}(t,x)<\tilde{v}(t+T,x)\quad\hbox{for all}\ \ t\geq 0,\ x\geq 0.
\end{equation}
Indeed, at $t=0$, \eqref{u-hat<u-tilde} certainly holds for all $x\geq 0$ since $\widehat{u}_0$ and $\widehat{v}_0$ are $0$. At the boundary $x=0$, the above inequality holds by virtue of \eqref{u>m} and \eqref{u-hat(0)}. Thus the comparison principle implies \eqref{u-hat<u-tilde}. 

By Theorem~\ref{thm:main-lindet}, the front of $\big(\widehat{u}(t,x),\widehat{v}(t,x)\big)$ propagates at the speed $c^*_{R}$, since the linearized system for \eqref{eq:auxiliary-below-3} is the same as that for \eqref{eq:main-sys}. This and \eqref{u-hat<u-tilde} proves 
\eqref{right2a}. Note that this statement holds for any nonnegative nontrivial solution of \eqref{eq:main-sys}. 
For solutions with compactly supported initial data, the assertion \eqref{right2b} 
is a consequence of Lemma~\ref{lem:upper-spreading}. The Theorem is proved.
\end{proof}

\subsection{Proof of global asymptotic stability of the positive equilibrium}
\label{ss:global stability-proof}

In this section we focus on the case where the coefficients of \eqref{eq:main-sys} are spatially homogeneous. In Section \ref{s:statsol-ode} we study the corresponding ODE problem and prove local asymptotic stability and uniqueness of stationary solutions. 
Then, in Section \ref{sss:hom-rd}, we extend those results to the system \eqref{eq:main-sys} with spatially homogeneous coefficients and prove  Theorem~\ref{thm:ltb}.

\subsubsection{Global dynamics of the ODE problem}
\label{s:statsol-ode}

Here we prove Proposition~\ref{prop:longtime-ode} on the dynamics of the ODE system. We rewrite the system \eqref{eq:syst-ode}:
\begin{equation*}
	\begin{system}
		u_t=(r_u-\kappa_u(u+v))u+\mu_vv-\mu_uu&=:f^u(u,v), \\
		v_t=(r_v-\kappa_v(u+v))v+\mu_uu-\mu_vv&=:f^v(u,v).
	\end{system}
\end{equation*}
Here the coefficients $r_u, r_v$ need not be positive, but the other coefficients are all assumed to be positive. 
We first prove statement (ii) (for $\lambda_A\leq 0$), which can be done by simply comparing the solutions with those of the linearized system. The proof of statement (i) (for $\lambda_A>0$), on the other hand, requires much more involved arguments, and a large part of this section is devoted to the proof of statement (ii). To achieve this goal, two different methods are to be employed, depending on the sign of $r_u-\mu_u$ and $r_v-\mu_v$. If one is positive, the system admits a Lyapunov function which 
simplifies the convergence proof significantly; whereas in the case where both are nonpositive, the system is ultimately cooperative and the long-time behavior can be handled by monotonicity arguments   
(using super- and subsolutions). Note that both arguments are inspired by 
\cite{Can-Cos-Yu-18}. We still include the proofs for the sake of completeness. 

\begin{proof}[Proof of Proposition~\ref{prop:longtime-ode} (ii)] 
The linearized system of \eqref{eq:syst-ode} is given in the following form:
\begin{equation}\label{eq:syst-ode-linearized}
	\begin{system}
		\relax &u_t =(r_u-\mu_u)u + \mu_vv, \\
		\relax &v_t =(r_v-\mu_v)v + \mu_uu,
	\end{system}
	\quad t>0,\ x\in\R .
\end{equation}
This is a cooperative system, and since the nonlinearity of \eqref{eq:syst-ode} is sublinear, solutions of \eqref{eq:syst-ode} are subsolutions of the system \eqref{eq:syst-ode-linearized}. Consequently, if $\big(u,v\big)$ and $\big(\bar u,\bar v\big)$ denote the solutions of \eqref{eq:syst-ode} and \eqref{eq:syst-ode-linearized}, respectively, we have
\begin{equation}\label{comparison-ODE}
\big(u(0),v(0)\big)\leq \big(\bar u(0),\bar v(0)\big)\ \Rightarrow\ 
\big(u(t),v(t)\big)\leq \big(\bar u(t),\bar v(t)\big)\ \ \hbox{for}\ \ t\geq 0.
\end{equation}

We first consider the case where $\lambda_A<0$. Let $(\varphi_A^u, \varphi_A^v)^T$ denote the positive eigenvector of the matrix $A$ corresponding to $\lambda_A$. Then for all $M>0$, $(\bar u(t), \bar v(t)):=Me^{\lambda_At}(\varphi_A^u, \varphi_A^v)$ is a solution of \eqref{eq:syst-ode-linearized} that converges to $(0,0)$ as $t\to+\infty$. Consequently, by \eqref{comparison-ODE}, any nonnegative solution of \eqref{eq:syst-ode} converges to $(0,0)$. 

Next we consider the case where $\lambda_A=0$. In this case, the system possesses a one-dimensional family of equilibrium points $(M \varphi_A^u, M \varphi_A^v)\;(M\geq 0)$. For each $u,v\geq 0$, define
\[
M(u,v):=\min \left\{ M\geq 0\,:\, u\leq M \varphi_A^u,\; v\leq M \varphi_A^v \right\} .
\]
    Then, by applying \eqref{comparison-ODE} to the case when $(\bar u, \bar v)$ is an equilibrium, we see that $M(u(t),v(t))$ is non-increasing in $t$ for any nonnegative solutions $(u(t),v(t))$ of \eqref{eq:syst-ode}. Furthermore, it is easily seen that $M(u(t),v(t))$ is strictly decreasing in $t$ except when $M=u(0)=v(0)=0$. Therefore, $M(u,v)$ is a Lyapunov function for the system \eqref{comparison-ODE} whose unique local minimum is achieved at $(u,v)=(0,0)$. This proves that $(u(t),v(t))\to (0,0)$ as $t\to+\infty$. The proof of statement (ii) is complete.
\end{proof}

\begin{remark}
    The non-existence of a positive stationary solution when $\lambda_A=0$ was treated in \cite[Theorem 1.4 (ii)]{Gir-18}  by using a different method and it also follows from \cite[Theorem 13.1 (c)]{Bus-Sir-04}.
\end{remark}

Now we turn to the proof of statement (ii). We prepare several lemmas.

\begin{lem}[Existence and uniqueness of stationary state]\label{lem:stat-ode-uniqueness}
    Let $ r_u, r_v\in\R$, $\kappa_u>0$, $\kappa_v>0$, and $\mu_u, \mu_v>0$. 
    Suppose that $\lambda_A>0$.
    Then, there exists a unique nonnegative nontrivial equilibrium $(u^*, v^*)$ for \eqref{eq:syst-ode},
    that satisfies:
	\begin{enumerate}[label={\rm(\roman*)}]
		\item \label{item:lemestu^*-smallmu}
			if $r_u-\mu_u>0$ (resp. $r_v-\mu_v>0$), then 
			\begin{align*}
				0< \frac{\min\left(\mu_v, r_u-\mu_u\right)}{\kappa_u}&\leq u^*\leq \frac{\max\left(\mu_v,r_u-\mu_u\right)}{\kappa_u} \\
				\text{resp.}\ \ 0<\frac{\min\left(\mu_u, r_v-\mu_v\right)}{\kappa_v}&\leq v^*\leq \frac{\max\left( \mu_u,r_v-\mu_v\right)}{\kappa_v}. 
			\end{align*}
			Equality holds in the above inequalities if, and only if $\mu_v=r_u-\mu_u$ (resp. $r_v-\mu_v=\mu_u$).
		\item \label{item:lemestu^*-bigmu}
			if $r_u-\mu_u\leq 0$ (resp. $r_v-\mu_v\leq0$), then $0<u^*< \frac{\mu_v}{\kappa_u}$ (resp. $0<v^*<\frac{\mu_u}{\kappa_v}$).
			
			In particular, $\big(u^*,v^*\big)$ belongs to the interior of the cooperative zone defined in \eqref{cooperative-zone} if $r_u-\mu_u\leq 0$ and $r_v-\mu_v\leq0$.
	\end{enumerate}
\end{lem}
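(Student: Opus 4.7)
The plan is to reduce the equilibrium system to a single scalar quadratic equation in the total mass $S := u^* + v^*$, from which the strict positivity, the bounds, existence and uniqueness all follow by elementary algebra. Elements of a similar reduction appear in \cite{Can-Cos-Yu-18}.

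\textbf{Step 1 (strict positivity and convex-combination identity).} Any nonnegative nontrivial equilibrium has both components strictly positive: if $u^* = 0$, then $f^u(0, v^*) = \mu_v v^*$ forces $v^* = 0$, and symmetrically. Rewriting $f^u(u^*,v^*) = 0$ as $\kappa_u u^*(u^* + v^*) = (r_u - \mu_u)u^* + \mu_v v^*$ and dividing by $u^* + v^* > 0$ yields
\begin{equation*}
    \kappa_u u^* = \frac{u^*}{u^* + v^*}(r_u - \mu_u) + \frac{v^*}{u^* + v^*}\,\mu_v,
\end{equation*}
together with the symmetric identity for $\kappa_v v^*$. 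Since both weights lie strictly in $(0,1)$, the bounds in (i) follow at once, with equality if and only if $\mu_v = r_u - \mu_u$ (respectively $\mu_u = r_v - \mu_v$). In case (ii), the strict inequality $r_u - \mu_u \leq 0 < \mu_v$ combined with strict convexity gives $\kappa_u u^* < \mu_v$, and similarly $\kappa_v v^* < \mu_u$, placing $(u^*, v^*)$ inside the cooperative zone \eqref{cooperative-zone}.

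\textbf{Step 2 (reduction to a quadratic).} Setting $t := u^*/(u^* + v^*) \in (0,1)$ and $S := u^* + v^* > 0$, the two convex combinations rewrite as
\begin{equation*}
    \kappa_u S - (r_u - \mu_u) = \frac{1-t}{t}\mu_v > 0, \qquad \kappa_v S - (r_v - \mu_v) = \frac{t}{1-t}\mu_u > 0.
\end{equation*}
Multiplying eliminates $t$ and produces
\begin{equation*}
    Q(S) := \bigl(\kappa_u S - (r_u - \mu_u)\bigr)\bigl(\kappa_v S - (r_v - \mu_v)\bigr) = \mu_u \mu_v,
\end{equation*}
subject to the admissibility constraint $S > S_* := \max\bigl((r_u - \mu_u)/\kappa_u,\,(r_v - \mu_v)/\kappa_v,\,0\bigr)$. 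Conversely, every admissible solution $S$ determines a unique $t \in (0,1)$ from either defining relation, and hence a unique positive equilibrium $(u^*, v^*) = (tS, (1-t)S)$.

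\textbf{Step 3 (existence and uniqueness via $Q$).} Both factors of $Q$ are positive and strictly increasing on $(S_*, +\infty)$, so $Q$ is strictly increasing there, from $Q(S_*)$ up to $+\infty$. Explicitly, $Q(S_*) = 0$ when $S_* > 0$, while $Q(0) = (r_u - \mu_u)(r_v - \mu_v)$ when $S_* = 0$ (which forces $r_u - \mu_u \leq 0$ and $r_v - \mu_v \leq 0$). Thus $Q(S) = \mu_u \mu_v$ has exactly one admissible root precisely when $Q(S_*) < \mu_u \mu_v$. A short case analysis using $\det A = (r_u - \mu_u)(r_v - \mu_v) - \mu_u \mu_v$ and $\mathrm{tr}\,A = (r_u - \mu_u) + (r_v - \mu_v)$ establishes the equivalence
\begin{equation*}
    Q(S_*) < \mu_u \mu_v \quad\Longleftrightarrow\quad \lambda_A > 0:
\end{equation*}
when $S_* > 0$, the left-hand side is $0 < \mu_u \mu_v$ automatically, and either $\det A < 0$ (mixed signs) or $\mathrm{tr}\,A > 0$ (both signs positive) guarantees $\lambda_A > 0$; when $S_* = 0$, $\mathrm{tr}\,A \leq 0$ and the condition reduces to $\det A < 0$, which is exactly $\lambda_A > 0$ in that regime. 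Under $\lambda_A > 0$ this yields a unique admissible $S$, hence the unique equilibrium $(u^*, v^*)$.

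The only mild obstacle is the sign bookkeeping in Step 3 — the patient verification that all combinations of signs of $r_u - \mu_u$ and $r_v - \mu_v$ realize the stated equivalence. The rest is pure algebra built on the convex-combination identity of Step 1.
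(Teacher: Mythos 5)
Your proof is correct and takes a genuinely different route from the paper's. The paper sets $S=u+v$, $Q=u/v$ and reduces to a quadratic in $Q$; that quadratic always has a unique positive root (signs of leading and constant coefficients), which gives uniqueness but not existence, so the paper falls back on a topological ejective fixed point theorem to prove that a positive equilibrium actually exists. You instead work with $S=u^*+v^*$ and the barycentric coordinate $t=u^*/S$, derive the product identity $\bigl(\kappa_u S-(r_u-\mu_u)\bigr)\bigl(\kappa_v S-(r_v-\mu_v)\bigr)=\mu_u\mu_v$, and observe that the left side is strictly increasing and positive on $(S_*,\infty)$; a short case analysis shows that the equation has a root there if and only if $\lambda_A>0$ (using that $\lambda_A>\max(r_u-\mu_u,\,r_v-\mu_v)$, so the case $S_*>0$ is automatic, while the case $S_*=0$ reduces to $\det A<0$). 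This recovers existence \emph{and} uniqueness from the same algebra, entirely avoiding the fixed point theorem, which is a cleaner and more self-contained argument. Your convex-combination identity $\kappa_u u^* = t(r_u-\mu_u)+(1-t)\mu_v$ also packages the bounds in (i)--(ii) in one line, where the paper performs a three-way case analysis on the sign of the two terms in $0=u^*(r_u-\mu_u-\kappa_u u^*)+v^*(\mu_v-\kappa_u u^*)$; the two are equivalent, but yours is slightly more transparent. The one spot that deserves more care in writing is the asserted equivalence $Q(S_*)<\mu_u\mu_v\Longleftrightarrow\lambda_A>0$; the sign bookkeeping you defer to is indeed routine but should be spelled out, including the observation that when $S_*>0$ the Perron root already dominates the diagonal entries so $\lambda_A>0$ holds automatically, and when $S_*=0$ one has $\operatorname{tr}A\le0$ so $\lambda_A>0$ is equivalent to $\det A<0$, which is exactly $Q(0)<\mu_u\mu_v$.
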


\begin{proof}
	Let $(u,v)$ be a nonnegative nontrivial stationary state for \eqref{eq:syst-ode}. Then $(u,v)$ satisfies
	\begin{equation*}
		\begin{system}
			\relax &u(r_u-\kappa_u(u+v))+\mu_vv-\mu_uu=0, \\
			\relax &v(r_v-\kappa_v(u+v))+\mu_uu-\mu_vv=0. 
		\end{system}
	\end{equation*}
	Since $(u,v)$ is nonnegative and nontrivial, and since $\mu_u>0,\mu_v>0$, we have in fact $u>0$ and $v>0$. 
	 We introduce the new variables $S=u+v$ and $Q=\frac{u}{v}$, 
	which 
	satisfy the system:
	\begin{align*}
		& \begin{system}
			\relax &Q(r_u-\kappa_uS)+\mu_v-\mu_uQ=0, \\
			\relax &r_v-\kappa_vS+\mu_uQ-\mu_v=0,
		\end{system} 
		\ \ \Longleftrightarrow\ \   
		\begin{system}
			\relax &Q(r_u-\kappa_uS)+\mu_v-\mu_uQ=0, \\
			\relax &S=\frac{r_v+\mu_uQ-\mu_v}{\kappa_v},
		\end{system}
		\\
		&\Longleftrightarrow \ \ 
		\begin{system}
			\relax &-\mu_u\frac{\kappa_u}{\kappa_v}Q^2 + \left(r_u-\mu_u\frac{\kappa_u}{\kappa_v}(r_v-\mu_v)\right)Q+\mu_v=0, \\
			\relax &S=\frac{r_v+\mu_uQ-\mu_v}{\kappa_v}.
		\end{system}
	\end{align*}
	
	The first line of the latter system has a unique positive solution: 
	\begin{equation*}
		Q=\frac{\kappa_v}{2\mu_u\kappa_u}\left(r_u-\mu_u-\frac{\kappa_u}{\kappa_v}(r_v-\mu_v)+\sqrt{\left(r_u-\mu_u-\frac{\kappa_u}{\kappa_v}(r_v-\mu_v)\right)^2+4\frac{\kappa_u}{\kappa_v}\mu_u\mu_v}\right).
	\end{equation*}
	Since the change of variables is reversible, we have proved the uniqueness of the solution. To prove the existence of an equilibrium for \eqref{eq:syst-ode}, we first observe that $S(t)=u(t)+v(t)$ satisfies
	\[
	S_t \leq \max(r_u,r_v)S-\min(\kappa_u,\kappa_v)S^2,
	\] 
	which follows by adding up the two equations in \eqref{eq:syst-ode}. Consequently, the interior of the triangle delimited by the axes and the line $\{u+v=\max(r_u.r_v)/\min(\kappa_u, \kappa_v)\}$ is positively invariant for the flow, and $(0,0)$ is an ejective equilibrium point whenever $\lambda_A>0$. By an extension of the  ejective fixed point theorem to  flows \cite[Theorem 19]{Horn-1970}, there exists a nonejective equilibrium for \eqref{eq:syst-ode}, which proves the existence. 

	Next we focus on the estimates on statement \ref{item:lemestu^*-smallmu}. Since the statement is symmetric with respect to the variables $u$ and $v$, we only prove the result for $u^*$. 
	Assume first that $r_u-\mu_u>\mu_v>0$. Then $u^*$ satisfies:
	\begin{equation}\label{eq:estonu^*}
		0=u^*(r_u-\mu_u-\kappa_uu^*)+v^*(\mu_v-\kappa_uu^*).
	\end{equation}
	If $u^*<\frac{\mu_v}{\kappa_u}$, then both terms in the right-hand side of \eqref{eq:estonu^*} are positive, which is a contradiction. Similarly, if $u^*>\frac{r_u-\mu_u}{\kappa_u}$, then both terms are negative, which is also a contradiction. We conclude that $\frac{\mu_v}{\kappa_u}\leq u^*\leq \frac{r_u-\mu_u}{\kappa_u}$. Finally, if equality is achieved in the latter inequality, then one of the terms in \eqref{eq:estonu^*} is 0 and the other is positive, which is a contradiction. Thus
	\begin{equation*}
		\frac{\mu_v}{\kappa_u}<u^*<\frac{r_u-\mu_u}{\kappa_u}.
	\end{equation*}
	In the case $0<r_u-\mu_u<\mu_v$, a similar argument shows that 
	\begin{equation*}
		\frac{r_u-\mu_u}{\kappa_u}<u^*<\frac{\mu_v}{\kappa_u}.
	\end{equation*}
	Finally, if $r_u-\mu_u=\mu_v$, then both terms in the right-hand side of \eqref{eq:estonu^*} have the same sign independently of $u^*$, hence the only possibility is
	\begin{equation*}
		u^*=\frac{r_u-\mu_u}{\kappa_u}=\frac{\mu_v}{\kappa_u}.
	\end{equation*}
	Statement \ref{item:lemestu^*-smallmu} is proved. To show Statement \ref{item:lemestu^*-bigmu}, since $r_u-\mu_u\leq 0$, we simply rewrite \eqref{eq:estonu^*} as:
	\begin{equation*}
		u^*=\frac{\mu_v}{\kappa_u}+\frac{u^*}{\kappa_uv^*}(r_u-\mu_u-\kappa_uu^*)<\frac{\mu_v}{\kappa_u}.
	\end{equation*}
	This proves Statement (ii) and the proof of Lemma \ref{lem:stat-ode-uniqueness} is complete.
\end{proof}

We have seen above that the unique nontrivial nonnegative equilibrium point $(u^*,v^*)$ of \eqref{eq:syst-ode} is automatically positive. Now we discuss its linear stability. The Jacobian matrix of the nonlinearity $f:=(f^u,f^v)$ at $(u^*,v^*)$ is given in the form
	\begin{equation}\label{eq:abcd}
		D_{(u^*, v^*)}f=\left(\begin{matrix} r_u-\mu_u-\kappa_u(2u+v) & \mu_v - \kappa_uu \\
		\mu_u-\kappa_vv & r_v-\mu_v-\kappa_v(u+2v) \end{matrix}\right)=:
		\begin{pmatrix}
			a & b \\ c & d
		\end{pmatrix}.
	\end{equation}
	The eigenvalues of this matrix determines the linear stability of $(u^*,v^*)$.

\begin{lem}[Linear stability of the positive equilibrium]\label{lem:stat-ode-stability}
	Let $ r_u, r_v\in\R$, $\kappa_u>0$, $\kappa_v>0$, and $\mu_u>0$, $\mu_v>0$. Assume that $\lambda_A>0$ and let $(u^*, v^*)$ be the positive equilibrium point of \eqref{eq:syst-ode}. Then $(u^*, v^*)$ is linearly stable. More precisely, the constants $a, b, c, d$ in \eqref{eq:abcd} satisfy:
\[
a=-\left(\kappa_u u^*+\mu_v\frac{v^*}{u^*}\right)<0, \quad d=-\left(\kappa_v v^*+\mu_u\frac{u^*}{v^*}\right)<0,
\] 
    as well as:
    \begin{equation*}
		\text{\rm tr} (D_{(u^*, v^*)} f) =a+d < 0,\quad\ 
		\det (D_{(u^*, v^*)} f) =ad-bc >0.
	\end{equation*}
\end{lem}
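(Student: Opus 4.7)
The plan is to exploit the equilibrium equations satisfied by $(u^*,v^*)$ to simplify the diagonal entries of the Jacobian before touching the trace and determinant. Starting from the first equation for $(u^*,v^*)$, namely $u^*(r_u - \mu_u - \kappa_u(u^*+v^*)) + \mu_v v^* = 0$, I divide through by $u^*>0$ to obtain $r_u - \mu_u - \kappa_u(u^*+v^*) = -\mu_v v^*/u^*$. Plugging this into $a = r_u-\mu_u-\kappa_u(2u^*+v^*) = \bigl(r_u-\mu_u-\kappa_u(u^*+v^*)\bigr) - \kappa_u u^*$ immediately yields the closed form $a = -\kappa_u u^* - \mu_v v^*/u^*$, which is strictly negative. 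The same manipulation applied to the second equilibrium equation gives $d = -\kappa_v v^* - \mu_u u^*/v^* < 0$. From here, $\operatorname{tr}(D_{(u^*,v^*)}f) = a + d < 0$ is immediate.

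For the determinant, I compute $ad - bc$ using the closed forms above together with $b = \mu_v - \kappa_u u^*$ and $c = \mu_u - \kappa_v v^*$. Expanding,
\[
ad = \kappa_u \kappa_v u^* v^* + \kappa_u \mu_u \frac{(u^*)^2}{v^*} + \kappa_v \mu_v \frac{(v^*)^2}{u^*} + \mu_u \mu_v,
\]
while $bc = \mu_u \mu_v - \mu_v \kappa_v v^* - \mu_u \kappa_u u^* + \kappa_u \kappa_v u^* v^*$. The cross terms $\mu_u\mu_v$ and $\kappa_u\kappa_v u^* v^*$ cancel in the subtraction, and after regrouping I get
\[
ad - bc = \kappa_u \mu_u u^*\!\left(1 + \frac{u^*}{v^*}\right) + \kappa_v \mu_v v^*\!\left(1 + \frac{v^*}{u^*}\right) > 0,
\]
which gives the desired inequality on the determinant.

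I do not anticipate a real obstacle here: the calculation is purely algebraic and the only non-obvious move is the substitution from the equilibrium identities, which turns the otherwise messy expressions for $a$ and $d$ into manifestly negative quantities and engineers the cancellation that makes $ad-bc$ a sum of positive terms. Combined with Lemma \ref{lem:stat-ode-uniqueness}, which guarantees $u^*,v^*>0$ and therefore legitimizes all the divisions above, this completes the stability argument without requiring any dynamical or spectral input beyond the signs of the trace and determinant of a $2\times 2$ matrix.
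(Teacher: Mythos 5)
Your proof is correct and follows essentially the same route as the paper: substitute the equilibrium identities $r_u-\mu_u-\kappa_u(u^*+v^*)=-\mu_v v^*/u^*$ and $r_v-\mu_v-\kappa_v(u^*+v^*)=-\mu_u u^*/v^*$ to obtain the manifestly negative closed forms for $a$ and $d$, then expand $ad-bc$ to see the cross terms cancel and a sum of positive terms remain. (Your final expression $ad-bc=\kappa_u\mu_u u^*(1+u^*/v^*)+\kappa_v\mu_v v^*(1+v^*/u^*)$ is in fact the correct expansion; the paper's displayed last two terms carry a small subscript typo, though the positivity conclusion is unaffected.)
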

\begin{proof}
Let us first remark that the equation satisfied by the equilibrium $(u^*, v^*)$ of \eqref{eq:syst-ode} can be written as 
	\begin{equation*}
		\begin{system}
			r_u-\mu_u-\kappa_u(u^*+v^*)=-\mu_v\frac{v^*}{u^*},\\
			r_v-\mu_v-\kappa_v(u^*+v^*)=-\mu_u\frac{u^*}{v^*}.
		\end{system}
	\end{equation*}
	Using the above relation, we have
	\begin{equation*}
		a+d = r_u-\mu_u-\kappa_u(2u^*+v^*) +  r_v-\mu_v-\kappa_v(u^*+2v^*) = -\mu_v \frac{v^*}{u^*}-\mu_u\frac{u^*}{v^*} -\kappa_u u^*-\kappa_vv^*<0.
	\end{equation*}
Computing further, we obtain	
	\begin{align*}
		ad-bc 
		 &= (r_u-\mu_u-\kappa_u(2u^*+v^*))(r_v-\mu_v-\kappa_v(u^*+2v^*)) - (\mu_v - \kappa_uu^*)(\mu_u-\kappa_vv^*) \\
		&= \left(\mu_v\frac{v^*}{u^*}+\kappa_uu^*\right)\left( \mu_u\frac{u^*}{v^*}+\kappa_vv^*\right) - (\mu_v - \kappa_uu^*)(\mu_u-\kappa_vv^*) \\
		&=\mu_v\kappa_v\frac{(v^*)^2}{u^*} + \mu_u\kappa_u\frac{(u^*)^2}{v^*}+\mu_v\kappa_uu^*+\mu_u\kappa_vv^*>0.
	\end{align*}
	The lemma is proved.
\end{proof}

\begin{rem}[Stability of $(0,0)$]\label{rem:stab-0-ode}
    The principal eigenvalue $\lambda_A$ can be computed explicitly as 
    \begin{equation*}
	\lambda_A=\frac{r_u-\mu_u+r_v-\mu_v+\sqrt{\big(r_u-\mu_u-(r_v-\mu_v)\big)^2+4\mu_u\mu_v}}{2}.
    \end{equation*}
    By a direct computation, one sees the following:
    \begin{itemize}\setlength{\itemsep}{0pt}
    \item[(i)] $\lambda_A$ is monotone increasing in both $r_u, r_v$ and $\lambda_A=0$ if $r_u=r_v=0$;
    \item[(ii)] If we fix the ratio between $\mu_u$ and $\mu_v$ as $\mu_u=\mu$, $\mu_v=\alpha\mu$, then $\lambda_A$ is monotone decreasing in $\mu$ and $\lambda_A\to\max(r_u,r_v)$ as $\mu\to 0$, while $\lambda_A\to\frac{\mu_v}{\mu_u+\mu_v}r_u+\frac{\mu_u}{\mu_u+\mu_v}r_v$ as $\mu\to\infty$.
    \end{itemize}
    From (i) above, we see that $\lambda_A>0$ if $r_u$, $r_v$ are both positive (hence $(0,0)$ is unstable), and $\lambda_A<0$ if $r_u$, $r_v$ are both negative (hence $(0,0)$ is stable). When $\max(r_u,r_v)>0$ but $r_u<0$ or $r_v<0$, then from (ii) above, we see that 
$(0,0)$ is always unstable if $\frac{\mu_v}{\mu_u+\mu_v}r_u+\frac{\mu_u}{\mu_u+\mu_v}r_v\geq0$, whereas if $\frac{\mu_v}{\mu_u+\mu_v}r_u+\frac{\mu_u}{\mu_u+\mu_v}r_v<0$, the stability of $(0,0)$ depends on the size of the mutation rate; roughly speaking, $(0,0)$ is unstable if $\mu_u, \mu_v$ are sufficiently small, and stable if $\mu_u, \mu_v$ are sufficiently large.
\end{rem}

We are now in a position to give our key arguments for the long-time behavior of the ODE problem. We begin with the case where a Lyapunov function exists for the system. We define:
\begin{equation}\label{eq:Lyapunov-func}
	\mathcal F_u(u):=u-u^*-u^*\ln\left(\frac{u}{u^*}\right), \qquad \mathcal F_v(v):=v-v^*-v^*\ln\left(\frac{v}{v^*}\right).
\end{equation}
Note that this Lyapunov function is rather classical and has been used for instance by \cite{Hsu-78} for competitive Lotka-Volterra systems. The present argument was inspired by  
\cite{Can-Cos-Yu-18}.

\begin{lem}[Lyapunov function]\label{lem:Lyapunov-ode}
	Let Assumption \ref{as:cond-instab-0} hold true, and assume that $\lambda_A>0$ and 
	that $\max(r_u-\mu_u, r_v-\mu_v)>0$. Then there is $K>0$ such that the function $\mathcal F^K(u, v):=\mathcal F_u(u)+K\mathcal F_v(v)$ is a Lyapunov function for \eqref{eq:syst-ode}, that is, for any positive solution $(u(t), v(t))$ of \eqref{eq:syst-ode}, 
	\begin{equation*}
		\frac{\dd}{\dd t}\mathcal F^K(u(t),v(t))\leq 0 \quad \hbox{for}\ \ t\geq 0. 
	\end{equation*}
	Moreover the inequality is strict unless $(u(t), v(t))=(u^*, v^*)$. 
\end{lem}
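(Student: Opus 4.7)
The plan is to compute $\dot{\mathcal F^K}$ explicitly along trajectories, recognize it as a quadratic form in the shifted variables $X := u - u^*$, $Y := v - v^*$ with $(u,v)$-dependent coefficients, and then choose $K > 0$ so that this form is negative semi-definite everywhere and strictly negative away from $(u^*,v^*)$.

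First, I would substitute the equilibrium identities $r_u - \mu_u - \kappa_u(u^*+v^*) = -\mu_v v^*/u^*$ and $r_v - \mu_v - \kappa_v(u^*+v^*) = -\mu_u u^*/v^*$ (which follow from $f^u(u^*,v^*) = f^v(u^*,v^*) = 0$) into $u_t$ and $v_t$, then multiply by $\mathcal F_u'(u) = (u-u^*)/u$ and $\mathcal F_v'(v) = (v-v^*)/v$ respectively. After regrouping this yields
\begin{align*}
\dot{\mathcal F_u} &= -A(u)\, X^2 + \Gamma_1(u)\, XY, \\
\dot{\mathcal F_v} &= -C(v)\, Y^2 + \Gamma_2(v)\, XY,
\end{align*}
with $A(u) := \kappa_u + \mu_v v^*/(u u^*) > 0$, $C(v) := \kappa_v + \mu_u u^*/(v v^*) > 0$, $\Gamma_1(u) := \mu_v/u - \kappa_u$ and $\Gamma_2(v) := \mu_u/v - \kappa_v$. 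Consequently $\dot{\mathcal F^K} = -A X^2 + B_K(u,v)\, XY - KC\, Y^2$ with $B_K := \Gamma_1 + K\Gamma_2$, and this quadratic form in $(X,Y)$ is negative semi-definite if and only if $B_K^2 \leq 4KAC$. A direct expansion, using only the definitions of $A,C,\Gamma_1,\Gamma_2$, gives the key algebraic identity
$A(u)C(v) - \Gamma_1(u)\Gamma_2(v) = (u^*+v^*)\bigl(\kappa_u\mu_u/(v v^*) + \kappa_v\mu_v/(u u^*)\bigr) > 0$,
which guarantees that the $K$-quadratic polynomial $\Phi_K(u,v) := B_K^2 - 4KAC$ possesses two positive real roots $K_-(u,v) \leq K_+(u,v)$ for every $(u,v) \in (0,\infty)^2$, and that $\Phi_K \leq 0$ holds iff $K \in [K_-(u,v), K_+(u,v)]$.

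The main obstacle, and the only place the hypothesis $\max(r_u-\mu_u, r_v-\mu_v) > 0$ enters, is to exhibit a single $K > 0$ lying in the intersection $\bigcap_{u,v > 0}[K_-(u,v), K_+(u,v)]$. By Lemma \ref{lem:stat-ode-uniqueness}(i), the hypothesis yields a strict positive lower bound on at least one of $u^*,v^*$ in terms of the data, which provides uniform control on $K_\pm(u,v)$ in the limits $u,v \to 0^+$ and $u,v \to +\infty$. Following the strategy of \cite{Can-Cos-Yu-18}, a natural explicit candidate is $K = \mu_v v^*/(\mu_u u^*)$ or a closely related ratio of equilibrium quantities; the required inequality $B_K^2 \leq 4KAC$ is then verified by clearing the common denominator $u u^* v v^*$ and collecting terms. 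Once such $K$ is fixed, the strict inequality $\Phi_K(u,v) < 0$ for $(u,v) \neq (u^*,v^*)$ forces $\dot{\mathcal F^K}(u,v) < 0$ whenever $(X,Y) \neq (0,0)$, which is precisely the strict-decrease assertion.
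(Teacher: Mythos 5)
Your derivation of the exact identity
\[
\dot{\mathcal F^K}(u,v)=-A(u)X^2+B_K(u,v)XY-KC(v)Y^2,\qquad X=u-u^*,\ Y=v-v^*,
\]
and your algebraic computation of $A(u)C(v)-\Gamma_1(u)\Gamma_2(v)=(u^*+v^*)\bigl(\kappa_u\mu_u/(vv^*)+\kappa_v\mu_v/(uu^*)\bigr)>0$ are both correct. But the crucial step of your plan --- finding a single $K>0$ lying in $\bigcap_{u,v>0}[K_-(u,v),K_+(u,v)]$ so that the \emph{form in the free variables $(X,Y)$} is negative semi-definite uniformly in $(u,v)$ --- cannot succeed, because that intersection is empty. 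As $u\to0^+$ with $v$ fixed (and similarly $v\to0^+$), one has $A(u)\sim\mu_v v^*/(uu^*)$ and $\Gamma_1(u)\sim\mu_v/u$, so $B_K^2\sim\mu_v^2/u^2$ whereas $4KA(u)C(v)\sim\text{const}/u$; hence $B_K^2/(4KAC)\to\infty$ and both roots $K_\pm(u,v)$ diverge like $1/u$. No choice of $K$ (and no lower bound on $u^*,v^*$ from Lemma~\ref{lem:stat-ode-uniqueness}) can fix this, since the divergence is driven by the live variable $u$, not by $u^*$. The sign of $\max(r_u-\mu_u,r_v-\mu_v)$ never enters this part of your argument, which is itself a warning sign: the lemma is false without that hypothesis.

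The missing idea is that you do not need the quadratic form to be negative for \emph{arbitrary} $(X,Y)$ --- only at the constrained point $X=u-u^*$, $Y=v-v^*$. Exploiting $v=Y+v^*>0$, the $(u,v)$-dependent form can be dominated by a form with \emph{constant} coefficients: writing $\Gamma_1(u)=(\mu_v/u^*-\kappa_u)+\mu_v(u^*-u)/(uu^*)$ and grouping, the $u$-dependent excess in $\dot{\mathcal F_u}$ collapses to $-\mu_v v\,X^2/(uu^*)\le 0$, giving
\[
\dot{\mathcal F_u}\le -\kappa_u X^2-\Bigl(\kappa_u-\tfrac{\mu_v}{u^*}\Bigr)XY,\qquad
\dot{\mathcal F_v}\le -\kappa_v Y^2-\Bigl(\kappa_v-\tfrac{\mu_u}{v^*}\Bigr)XY.
\]
Now the coefficients are constants, the discriminant condition becomes a genuine single inequality in $K$, and its solvability reduces to $BC<AD$ with $A=\kappa_u$, $B=\kappa_u-\mu_v/u^*$, $C=\kappa_v-\mu_u/v^*$, $D=\kappa_v$. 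This is where the hypothesis $\max(r_u-\mu_u,r_v-\mu_v)>0$ is actually used, via the bounds on $u^*$ and $v^*$ from Lemma~\ref{lem:stat-ode-uniqueness}. Your identity for $AC-\Gamma_1\Gamma_2$ is a nice exact version of this, but it does not substitute for the bounding step.
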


\begin{proof}
	Since it is clear that $\mathcal F(u^*, v^*)=0$, we will focus on the case of a solution orbit $\big(u(t),v(t)\big)$ starting from a positive initial condition $(u_0, v_0)$. We first compute: 
	\begin{align*}
		\frac{\dd}{\dd t}\mathcal F_u(u(t))&= u_t\left(1-\frac{u^*}{u}\right)= (u-u^*)\frac{u_t}{u}\\
		&=(u-u^*)\left(r_u-\mu_u-\kappa_u(u+v)+\mu_v\frac{v}{u}\right) \\
		&=(u-u^*)\left(\kappa_u(u^*+v^*)-\mu_v\frac{v^*}{u^*}-\kappa_u(u+v)+\mu_v\frac{v}{u}\right) \\ 
		&=-\kappa_u(u-u^*)^2 -\kappa_u(u-u^*)(v-v^*)+\mu_v(u-u^*)\left(\frac{u^*v-uv^*}{uu^*}\right) \\
		&=-\left(\kappa_u+\mu_v\frac{v^*}{uu^*}\right)(u-u^*)^2 -\left(\kappa_u-\frac{\mu_v}{u^*}\right)(u-u^*)(v-v^*) \\
		&\leq-\kappa_u(u-u^*)^2-\left(\kappa_u-\frac{\mu_v}{u^*}\right)(u-u^*)(v-v^*),
	\end{align*}
	and the  inequality is strict unless $u=u^*$. Similarly, 
	\begin{equation*}
		\frac{\dd}{\dd t}\mathcal F_v(y)\leq-\kappa_v(v-v^*)^2-\left(\kappa_v-\frac{\mu_u}{v^*}\right)(u-u^*)(v-v^*),
	\end{equation*}
	and the inequality is strict unless $v=v^*$. Since $(u, v)\neq (u^*, v^*)$, we have for all $K>0$: 
	\begin{multline*}
		\frac{\dd}{\dd t}\mathcal F^K(u,v)<-\kappa_u(u-u^*)^2 -\left(\kappa_u-\frac{\mu_v}{u^*} + K\left(\kappa_v-\frac{\mu_u}{u^*}\right)\right)(u-u^*)(v-v^*)-K\kappa_v(v-v^*)^2.
	\end{multline*}
	Next we prove that the right-hand side can be made nonpositive for all $(u,v)>(0,0)$ for a well-chosen value of $K$. We remark that the right-hand side is a quadratic form in $(U:=u-u^*, V:=v-v^*)$, which can be written as $-Q(U,V)$ where:
	\begin{equation}\label{eq:QUV}
		Q(U,V):=AU^2+(B+KC)UV+KDV^2, 
	\end{equation}
	and $U=u-u^*$, $V=v-v^*$, $A=\kappa_u$, $B=\kappa_u-\frac{\mu_v}{u^*}$, $C=\kappa_v-\frac{\mu_u}{v^*}$ and $D=\kappa_v$. We claim that $Q(U,V)$ can be made positive definite by a proper choice of $K>0$. Indeed, algebraic computations lead to 
	\begin{align*}
		Q(U,V)&=A\left(U+\frac{B+KC}{2A}V\right)^2+\left(KD-\frac{(B+KC)^2}{4A}\right)V^2, 
	\end{align*}
	and therefore it suffices to find $K>0$ such that 
	\begin{equation*}
		0<KD-\frac{(B+KC)^2}{4A}=\dfrac{-C^2K^2+(4AD-2BC)K-B^2}{4A}=:\frac{P(K)}{4A}.
	\end{equation*}
	Here $P(K)$ is a quadratic polynomial and the number of its real roots is determined by the sign of the quantity
	\begin{equation*}
		\Delta=(4AD-2BC)^2-4B^2C^2=16AD(AD-BC)>0. 
	\end{equation*}
	If $BC<AD$, the polynomial $P$ has two real roots, and those roots have to be nonnegative since $P(K)<0$ for all $K<0$. This implies that there exists $K>0$ with $P(K)>0$, which will prove our claim and consequently will complete the proof of Lemma \ref{lem:Lyapunov-ode}.

	Our last task is therefore to check that $BC<AD$. Assume first that $r_u-\mu_u>0$ and $r_v-\mu_v>0$, then $B=\kappa_u-\frac{\mu_v}{u^*}>0$ and $C=\kappa_v-\frac{\mu_u}{v^*}>0$ are both positive by Lemma \ref{lem:stat-ode-uniqueness}. Thus,
	\begin{equation*}
		BC=\left(\kappa_u-\frac{\mu_v}{u^*}\right)\left(\kappa_v-\frac{\mu_u}{v^*}\right)\leq \kappa_u\kappa_v=AD.
	\end{equation*}
	Next assume that $r_u-\mu_u\leq 0$ and $r_v-\mu_v>0$ (the case $r_v-\mu_v\leq 0$ and $r_u-\mu_u>0$ can be treated similarly). In this case, $\kappa_u-\frac{\mu_v}{u^*}\leq 0$ and $\kappa_v-\frac{\mu_u}{v^*}>0$ and thus
	\begin{equation*}
		BC=\left(\kappa_u-\frac{\mu_v}{u^*}\right)\left(\kappa_v-\frac{\mu_u}{v^*}\right)\leq0< \kappa_u\kappa_v=AD.
	\end{equation*}
	Hence $BC<AD $ always holds under our hypotheses. Lemma \ref{lem:Lyapunov-ode} is proved.
\end{proof}

Notice in particular that Proposition \ref{prop:longtime-ode}  (i) follows directly from Lemma \ref{lem:Lyapunov-ode} in the case $\max(r_u-\mu_u, r_v-\mu_v)>0$. 
Next we consider the case $\max(r_u-\mu_u, r_v-\mu_v)\leq 0$. In this case, we show that the dynamics is eventually cooperative and we deduce the conclusion by comparison arguments.

\begin{lem}[Ultimately cooperative dynamics]\label{lem:coop}
	Let Assumption \ref{as:cond-instab-0} hold, and assume that $\lambda_A>0$ and that $\max(r_u-\mu_u, r_v-\mu_v)\leq 0$. 
	Then any positive solution $(u(t), v(t))$ of \eqref{eq:syst-ode} satisfies
\begin{equation}\label{u-to-u*}
		\lim_{t\to+\infty}(u(t), v(t))=(u^*, v^*).
	\end{equation}
\end{lem}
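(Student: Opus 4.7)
The key feature of the regime $\max(r_u-\mu_u, r_v-\mu_v)\le 0$ is that, by Lemma~\ref{lem:stat-ode-uniqueness}(ii), the unique positive equilibrium $(u^*,v^*)$ sits strictly inside the cooperative zone
\[
    \mathcal C := \{(u,v) : 0 \le \kappa_u u \le \mu_v ,\ 0 \le \kappa_v v \le \mu_u\}.
\]
My strategy is to reduce the analysis to $\mathcal C$, where \eqref{eq:syst-ode} is cooperative and irreducible, and then sandwich the trajectory between a monotone subsolution starting small and a monotone supersolution starting at the corner of $\mathcal C$, both of which converge to $(u^*,v^*)$.

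First I would establish that $\mathcal C$ is positively invariant and attracts every positive trajectory in finite time. Writing the vector field as
\[
    u_t = (r_u-\mu_u-\kappa_u u)\,u + v\,(\mu_v - \kappa_u u),
\]
one sees that whenever $u\ge \mu_v/\kappa_u$ the second term is nonpositive and the first is at most $-\kappa_u u^2$ (using $r_u-\mu_u\le 0$), so $u_t \le -\mu_v^2/\kappa_u<0$. Hence $u$ enters $\{u\le \mu_v/\kappa_u\}$ in finite time, and on the face $u=\mu_v/\kappa_u$ the vector field satisfies $f^u=(r_u-\mu_u-\mu_v)\,\mu_v/\kappa_u<0$ uniformly in $v$, so this face is strictly inward-pointing and $\{u\le\mu_v/\kappa_u\}$ is positively invariant. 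The symmetric argument for $v$ shows that the trajectory lies in $\mathcal C$ for all $t$ larger than some $T>0$.

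For $t\ge T$ the Jacobian of $(f^u,f^v)$ has nonnegative off-diagonal entries $\mu_v-\kappa_u u\ge 0$ and $\mu_u-\kappa_v v\ge 0$, so \eqref{eq:syst-ode} is cooperative inside $\mathcal C$ and the standard comparison principle for cooperative ODE systems applies. I would then introduce two auxiliary solutions starting at $t=T$. The subsolution is $\epsilon(\varphi_A^u,\varphi_A^v)$ for small $\epsilon>0$; a direct computation using $A(\varphi_A^u,\varphi_A^v)^T = \lambda_A(\varphi_A^u,\varphi_A^v)^T$ gives
\[
    f^u\bigl(\epsilon\varphi_A^u,\epsilon\varphi_A^v\bigr) = \epsilon\varphi_A^u\bigl[\lambda_A - \kappa_u\epsilon(\varphi_A^u+\varphi_A^v)\bigr]>0
\]
for $\epsilon$ small enough (crucially using $\lambda_A>0$), and similarly for $f^v$, so this datum is a strict subsolution; shrinking $\epsilon$ further ensures $\epsilon(\varphi_A^u,\varphi_A^v)\le (u(T),v(T))$, which is possible because the flow preserves positivity. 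The supersolution is $(\mu_v/\kappa_u,\mu_u/\kappa_v)$, which is a strict supersolution by the same corner computation as in step~1. Cooperative comparison yields $(u^-,v^-)\le(u,v)\le(u^+,v^+)$ for $t\ge T$. Both $(u^\pm,v^\pm)$ are componentwise monotone, bounded in $\mathcal C$, hence converge to equilibria of \eqref{eq:syst-ode}; by Lemma~\ref{lem:stat-ode-uniqueness} the only nonnegative equilibria are $(0,0)$ and $(u^*,v^*)$, and strict monotonicity from a strictly positive initial value rules out $(0,0)$ in both cases. The squeeze then delivers $(u(t),v(t))\to(u^*,v^*)$.

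The main difficulty I anticipate is step~1: without the sign condition $r_u-\mu_u\le 0$ the quantitative estimate $u_t\le -\mu_v^2/\kappa_u$ breaks down, which is exactly why the complementary regime treated by Lemma~\ref{lem:Lyapunov-ode} requires a completely different (Lyapunov) argument. Once the trajectory is known to land in $\mathcal C$ in finite time, the rest is a textbook application of the monotone dynamical systems framework for cooperative ODEs.
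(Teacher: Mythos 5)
Your proof is essentially correct and, for the upper bound, takes a genuinely different route from the paper. The paper avoids the entry-time argument altogether: it replaces \eqref{eq:syst-ode} by the \emph{globally} cooperative auxiliary system \eqref{eq:auxiliary-below}-style ODE
\[
\bar u_t=\bar u(r_u-\mu_u-\kappa_u\bar u)+\bar v\max(\mu_v-\kappa_u\bar u, 0),\qquad
\bar v_t=\bar v(r_v-\mu_v-\kappa_v\bar v)+\bar u\max(\mu_u-\kappa_v\bar v, 0),
\]
for which every positive solution of \eqref{eq:syst-ode} is a subsolution \emph{on all of $\R_+^2$}, so that the comparison with a decreasing supersolution started at the corner $\bigl(\max(u_0,\mu_v/\kappa_u),\max(v_0,\mu_u/\kappa_v)\bigr)$ can be run from $t=0$. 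You instead establish directly, via the quantitative dissipativity estimate $u_t\le -\mu_v^2/\kappa_u$ whenever $u\ge\mu_v/\kappa_u$ (and symmetrically for $v$), that the trajectory enters $\mathcal C$ in finite time and $\mathcal C$ is positively invariant, and you then stay inside $\mathcal C$, where \eqref{eq:syst-ode} itself is cooperative. Both are correct; the paper's version spares the entry-time estimate (and reuses the same auxiliary system in the Liouville argument of Theorem~\ref{thm:entire-sol-hom}), while yours avoids introducing a modified system and makes the attracting role of the cooperative zone explicit. Your lower-bound argument (the increasing subsolution $\ep(\varphi_A^u,\varphi_A^v)$ launched once inside $\mathcal C$) coincides with the paper's.

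One step is incomplete as written: you exclude $(0,0)$ as a limit for \emph{both} monotone auxiliary trajectories by invoking ``strict monotonicity from a strictly positive initial value,'' but this only works for the \emph{increasing} subsolution $(u^-,v^-)$. For the \emph{decreasing} supersolution $(u^+,v^+)$ started at the corner, monotone decrease from a positive point does not by itself preclude convergence to $(0,0)$. The gap is minor: since $(u^-,v^-)(T)\le (u^+,v^+)(T)$ and both solve the same cooperative ODE in $\mathcal C$, the comparison principle gives $(u^-,v^-)(t)\le(u^+,v^+)(t)$ for all $t\ge T$, and as $(u^-,v^-)\to(u^*,v^*)$ this forces $\lim(u^+,v^+)\ge(u^*,v^*)>(0,0)$, so the only admissible limit equilibrium for $(u^+,v^+)$ is $(u^*,v^*)$. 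With this one-line amendment the squeeze goes through.
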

\begin{proof}
	Let $(u(t), v(t))$ be a positive solution to \eqref{eq:syst-ode}. Then $(u(t), v(t))$ is a subsolution to the cooperative system:
	\begin{equation}\label{upper-ODE}
		\left\{\begin{aligned}
			\bar u_t&=\bar u(r_u-\mu_u-\kappa_u\bar u)+\bar v\max(\mu_v-\kappa_u\bar u, 0),\\
			\bar v_t&=\bar v(r_v-\mu_v-\kappa_v\bar v)+\bar u\max(\mu_u-\kappa_v\bar v, 0).
		\end{aligned}\right.
	\end{equation}
	Let $(\bar u(t), \bar v(t))$ be a solution of \eqref{upper-ODE} with the following initial data:
	\[
	\bar u(0) = \max\Big(u(0), \frac{\mu_v}{\kappa_u}\Big),\quad 
	\bar v(0) = \max\Big(v(0), \frac{\mu_u}{\kappa_v}\Big). 
	\]
	Since $\max(r_u-\mu_u, r_v-\mu_v)\leq 0$, we have $\bar u_t(0)<0$, $\bar v_t(0)<0$. And since \eqref{upper-ODE} is a cooperative system, the comparison principle implies that $(\bar u(t), \bar v(t))$ is monotone decreasing in $t\geq 0$. 
	Therefore $(\bar u(t), \bar v(t))$ eventually enters the cooperative zone defined in \eqref{cooperative-zone}, {\it i.e.}, $0<\bar u< \frac{\mu_v}{\kappa_u}$, $0<\bar v< \frac{\mu_u}{\kappa_v}$, and converges to an equilibrium point $(\bar u^*, \bar v^*)$ of \eqref{upper-ODE} as $t\to+\infty$. 
	Since the systems \eqref{upper-ODE} and \eqref{eq:syst-ode} are identical in the cooperative zone, $(\bar u^*, \bar v^*)$ is also an equilibrium point of \eqref{eq:syst-ode}. 
	In view of this and the fact that $\bar u(0)\geq \frac{\mu_v}{\kappa_u}>u^*$, $\bar v(0)\geq \frac{\mu_u}{\kappa_v}>v^*$, and recalling the uniqueness of the positive equilibrium point, we see that $(\bar u^*, \bar v^*)=(u^*,v^*)$. Since $(u(t), v(t))$ is a subsolution of \eqref{upper-ODE} such that $u(0)\leq \bar u(0)$ and $v(0)\leq \bar v(0)$, we have $u(t)\leq \bar u(t)$, $v(t)\leq \bar v(t)$ for all $t\geq 0$. Hence
	\begin{equation}\label{u<u*}
	\limsup_{t\to\infty}u(t)\leq \lim_{t\to\infty}\bar u(t)=u^*,\quad 
	\limsup_{t\to\infty}v(t)\leq \lim_{t\to\infty}\bar v(t)=v^*.
	\end{equation}
	
Next let $(\varphi_A^u, \varphi_A^v)^T$ denote the positive eigenvector of the matrix $A$ corresponding to $\lambda_A$, and let $(u^\ep(t),v^\ep(t))$ be the solution of \eqref{eq:syst-ode} with initial data $(u^\ep(0),v^\ep(0))=\ep (\varphi_A^u, \varphi_A^v)$. Since $\lambda_A>0$, the following inequalities hold if $\ep>0$ is chosen sufficiently small:
\[
u^\ep_t(0)=\lambda_A \ep\varphi_A^u+o(\ep)>0,\quad 
v^\ep_t(0)=\lambda_A \ep\varphi_A^v+o(\ep)>0.
\]
Consequently, $(u^\ep(t),v^\ep(t))$ is monotone increasing $t$ so long as it stays in the cooperative zone. 

By \eqref{u<u*}, there exists $t_1\geq 0$ such that $(u(t),v(t))$ lies in the cooperative zone for all $t\geq t_1$. Replacing $\ep>0$ by a smaller constant if necessary, we may assume that $0<u^\ep(0) < u(t_1)$, $0<v^\ep(0) < v(t_1)$. 
Then $(u^\ep(t),v^\ep(t))$ remains in the cooperative zone and satisfies 
\begin{equation}\label{uep<u}
u^\ep(t)<u(t+t_1), \ \ v^\ep(t)<v(t+t_1)\quad \hbox{for all}\ \  t\geq 0.
\end{equation}
Hence $(u^\ep(t),v^\ep(t))$ converges monotonically to an equilibrium point of \eqref{eq:syst-ode}, which coincides with $(u^*,v^*)$ by the uniqueness of the positive equilibrium point. This and \eqref{uep<u} imply
\[
\liminf_{t\to\infty}u(t)\geq \lim_{t\to\infty}u^\ep(t)=u^*,\quad 
	\liminf_{t\to\infty}v(t)\geq \lim_{t\to\infty}v^\ep(t)=v^*.
\]
Combining this with \eqref{u<u*}, we obtain \eqref{u-to-u*}. The lemma is proved.
\end{proof}

We are now in a position to prove Proposition \ref{prop:longtime-ode} \textit{(i)} and conclude this section:

\begin{proof}[Proof of Proposition \ref{prop:longtime-ode} \textit{(i)}]
    If $\lambda_A>0$, the existence and uniqueness of a stationary solution $(u^*, v^*)$ has been shown in Lemma \ref{lem:stat-ode-uniqueness}. The convergence of $(u(t), v(t))$ when $t\to+\infty$ has been shown in Lemma \ref{lem:Lyapunov-ode} for the case $\max(r_u-\mu_u, r_v-\mu_v)>0$ using a Lyapunov function,  
and in Lemma \ref{lem:coop} for the case 
$\max(r_u-\mu_u, r_v-\mu_v)\leq 0$ by means of comparison arguments. 
This covers all the cases therefore completes the proof of the statement \textit{(i)} of Proposition \ref{prop:longtime-ode}.
\end{proof}


\subsubsection{Asymptotic behavior of the homogeneous RD problem}
\label{sss:hom-rd}

In this section we prove Theorem~\ref{thm:ltb} on the convergence of solutions of \eqref{eq:syst-hom-rd} to the positive equilibrium point $(u^*,v^*)$. For that purpose, we first prove the following Liouville type result which states that any entire solution of \eqref{eq:syst-hom-rd} that is uniformly positive is identically equal to $(u^*,v^*)$. 

\begin{thm}[Liouville type resut]\label{thm:entire-sol-hom}
	Let Assumption \ref{as:cond-instab-0} hold and assume that $\lambda_A>0$. Let $(u(t, x), v(t,x))$ be a nonnegative bounded entire solution to \eqref{eq:syst-hom-rd}. 
	Assume that $(u,v)$ is uniformly positive, that is, 
	there exists $\delta>0$ such that 
	\begin{equation*}
		u(t, x)\geq \delta, \ \  v(t,x)\geq \delta\quad \hbox{for all}\ \ t\in\R ,\ x\in\R . 
	\end{equation*}
	Then $(u, v)\equiv (u^*, v^*)$.
\end{thm}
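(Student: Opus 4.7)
\textbf{Proof proposal for Theorem \ref{thm:entire-sol-hom}.} The plan is to split along the same dichotomy used in the ODE analysis: the case $\max(r_u-\mu_u,r_v-\mu_v)>0$, where the Lyapunov function of Lemma~\ref{lem:Lyapunov-ode} is available, and the case $\max(r_u-\mu_u,r_v-\mu_v)\leq 0$, where $(u^*,v^*)$ lies strictly inside the cooperative zone by Lemma~\ref{lem:stat-ode-uniqueness}\ref{item:lemestu^*-bigmu}. Parabolic regularity and the uniform positivity assumption imply that $(u,v)$ and all its derivatives of bounded order are uniformly bounded on $\R\times\R$, and that $(u,v)$ takes values in the compact set $[\delta,M]^2$ for some $M<\infty$.

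\textbf{Case 1 (Lyapunov function).} I would transport the ODE Lyapunov $\mathcal{F}^K(u,v)=\mathcal{F}_u(u)+K\mathcal{F}_v(v)$ of Lemma~\ref{lem:Lyapunov-ode} to the PDE by setting $\Phi(t,x):=\mathcal{F}^K(u(t,x),v(t,x))$. Since $\mathcal{F}_u''(u)=u^*/u^2>0$ and $\mathcal{F}_v''(v)=v^*/v^2>0$, a direct computation using the equations gives
\[
\Phi_t-\sigma\Phi_{xx}=-\sigma\Bigl(\tfrac{u^*}{u^2}u_x^2+K\tfrac{v^*}{v^2}v_x^2\Bigr)+\mathcal{F}_u'(u)f^u(u,v)+K\mathcal{F}_v'(v)f^v(u,v),
\]
and the two ODE terms are bounded above by $-cQ(u-u^*,v-v^*)$ with $Q$ positive definite, by Lemma~\ref{lem:Lyapunov-ode}. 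Since $(u,v)$ ranges in the compact set $[\delta,M]^2$, a Taylor comparison yields constants $c_1,c_2>0$ with $c_1\Phi\leq(u-u^*)^2+(v-v^*)^2\leq c_2 Q(u-u^*,v-v^*)$, hence $\Phi_t-\sigma\Phi_{xx}\leq -c'\Phi$ for some $c'>0$. Setting $\psi(t,x):=e^{c'(t-s)}\Phi(t,x)$ makes $\psi$ a nonnegative bounded classical subsolution of the heat equation on $[s,+\infty)\times\R$. By the maximum principle for bounded subsolutions of the heat equation on $\R$, $\sup_{x}\psi(t,x)\leq \sup_{x}\psi(s,x)\leq\sup\Phi=:M'$, which translates into $\Phi(t,x)\leq M'e^{-c'(t-s)}$. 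Letting $s\to-\infty$ forces $\Phi\equiv 0$, i.e.\ $(u,v)\equiv(u^*,v^*)$.

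\textbf{Case 2 (Comparison in the cooperative zone).} I would sandwich $(u,v)$ between two spatially constant ODE solutions converging to $(u^*,v^*)$. For the upper bound, $(u,v)$ is a subsolution of the cooperative extension obtained from \eqref{upper-ODE} by adding diffusion, so fixing $(t_0,x_0)$ and comparing on $[T,t_0]$ with the spatially constant solution starting from $(M,M)$ at time $T$, Lemma~\ref{lem:coop} gives a monotone decrease to $(u^*,v^*)$; sending $T\to-\infty$ yields $u(t_0,x_0)\leq u^*$ and $v(t_0,x_0)\leq v^*$. This upper bound places $(u,v)\in(0,u^*]\times(0,v^*]$, which is strictly inside the cooperative zone by Lemma~\ref{lem:stat-ode-uniqueness}\ref{item:lemestu^*-bigmu}, so on this range the original system \eqref{eq:syst-hom-rd} is cooperative and the classical parabolic comparison principle applies. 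For the lower bound, consider the ODE trajectory $(U_\ep,V_\ep)$ starting at $\ep(\varphi_A^u,\varphi_A^v)$ at time $T$: since $\lambda_A>0$, for small $\ep$ the initial datum is a strict subsolution, so $(U_\ep,V_\ep)$ is monotone increasing, stays in the cooperative zone, and converges to $(u^*,v^*)$ by Proposition~\ref{prop:longtime-ode}\ref{item:lemestu^*-smallmu}. Choosing $\ep$ small enough that $\ep\varphi_A^u,\ep\varphi_A^v\leq\delta$, PDE comparison in the cooperative zone gives $u(t,x)\geq U_\ep(t-T)$, $v(t,x)\geq V_\ep(t-T)$ on $t\geq T$; sending $T\to-\infty$ produces $u\geq u^*$, $v\geq v^*$. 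Combined with the upper bound, $(u,v)\equiv(u^*,v^*)$.

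\textbf{Main obstacle.} The most delicate step is the maximum principle used in Case~1: justifying $\sup_{x}\psi(t,x)\leq\sup_{x}\psi(s,x)$ on the unbounded domain $\R$ requires a Widder-type growth condition, which is supplied here by the $O(1)$ bound on $\psi$ on any finite time window. A secondary subtlety is verifying the equivalence $c_1\Phi\leq Q(u-u^*,v-v^*)\leq c_2\Phi$ uniformly in $(u,v)\in[\delta,M]^2$, which relies crucially on $\delta>0$ to keep $\mathcal{F}_u'',\mathcal{F}_v''$ bounded away from zero and infinity. In Case~2, one must check that the spatially homogeneous comparison trajectories actually remain in the cooperative zone throughout their evolution, which follows from the strict inclusions $u^*<\mu_v/\kappa_u$ and $v^*<\mu_u/\kappa_v$ of Lemma~\ref{lem:stat-ode-uniqueness}\ref{item:lemestu^*-bigmu} together with the monotonicity properties of the comparison systems.
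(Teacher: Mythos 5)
Your proof is correct and uses essentially the same two-case decomposition as the paper's Steps~1 and~2, with the order of the cases swapped. Your Case~2 reproduces the paper's Step~1 (the ultimately cooperative case); starting the upper ODE barrier at $(M,M)$ rather than at $(\max(M,\mu_v/\kappa_u),\max(M,\mu_u/\kappa_v))$ may lose the claimed monotonicity of the comparison trajectory, but Lemma~\ref{lem:coop} still yields its convergence to $(u^*,v^*)$, so the sandwich closes. Where you genuinely deviate is in the Lyapunov case (your Case~1, the paper's Step~2). The paper concludes by asserting that $w=\mathcal F^K(u,v)$, being a bounded entire subsolution of the heat equation on $\R\times\R$, ``has to be a constant.'' Read literally, that Liouville assertion is not true: $w(t,x)=1+\tanh(-t)$ is bounded, positive, nonconstant, and satisfies $w_t-\sigma w_{xx}=-\cosh^{-2}(t)<0$. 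Your version supplies the missing quantitative input: exploiting the compact range $[\delta,M]^2$ of $(u,v)$---so here the uniform positivity hypothesis is used---you compare $\Phi=\mathcal F^K(u,v)$ and the positive definite form $Q(u-u^*,v-v^*)$, each Taylor-comparable to $(u-u^*)^2+(v-v^*)^2$ on that compact set, to get $Q\geq c'\Phi$ and hence $\Phi_t-\sigma\Phi_{xx}\leq -c'\Phi$. Applying the Phragm\'en--Lindel\"of maximum principle to $e^{c'(t-s)}\Phi$ on each finite slab $[s,t_0]\times\R$ (boundedness on the slab is what justifies it, as you note) then forces $\Phi(t_0,\cdot)\leq M'e^{-c'(t_0-s)}\to 0$ as $s\to-\infty$. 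This closes the Lyapunov step rigorously, and is presumably the argument the paper's terse sentence intends; your write-up makes the role of $\delta>0$ in securing $Q\geq c'\Phi$ explicit, which the paper does not.
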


\begin{proof}
	We divide the proof in three steps.

\vskip 3pt
	\begin{stepping}
		\step \underbar{The ultimately cooperative case: $\max(r_u-\mu_u, r_v-\mu_v)\leq 0$.}

In this case, our argument is partly similar to the proof of Lemma~\ref{lem:coop}. Define
\[
	M:=\sup_{(t,x)\in\R ^2} \max\big(u(t,x),v(t,x)\big)
\]
and let $(\bar u(t),\bar v(t))$ be the solution of the ODE system \eqref{upper-ODE} for the following initial data:
\[
	\bar u(0)=\max\Big(M,\frac{\mu_v}{\kappa_u}\Big), 
	\quad
	\bar v(0)=\max\Big(M,\frac{\mu_u}{\kappa_v}\Big).
\]
As we have seen in the proof of Lemma \ref{lem:coop}, $(\bar u(t), \bar v(t))\to (u^*,v^*)$ as $t\to+\infty$. Note also that $(\bar u(t),\bar v(t))$ can be identified with a spatially uniform solution of the following reaction-diffusion system:
	\begin{equation}\label{upper-RD}
	\begin{system}
		\relax &u_t-\sigma u_{xx}=(r_u-\mu_u-\kappa_u\bar u)\bar u+\bar v\max(\mu_v-\kappa_u\bar u, 0), \\
		\relax &v_t-\sigma v_{xx}=(r_v-\mu_v-\kappa_v\bar v)\bar v+\bar u\max(\mu_u-\kappa_v\bar v, 0).
	\end{system}
	\end{equation}
		
Let $t_0\in\R $ be an arbitrary real number. What we have to show is that 
\begin{equation}\label{u(t0,x)=u^*}
	\big(u(t_0,x), v(t_0,x)\big)=\big(u^*,v^*\big)\quad \hbox{for all}\ \ x\in\R .
\end{equation}		
Choose $T>0$ arbitrarily and define $U(t,x)=u(t+t_0-T,x)$, $V(t,x)=v(t+t_0-T,x)$. Then $(U(t,x), V(t,x))$ is a solution of \eqref{eq:syst-hom-rd} and satisfies $U(0,x)\leq M\leq \bar u(0)$, $V(0,x)\leq M\leq \bar v(0)$ for all $x\in\R $. Since any solution of \eqref{eq:syst-hom-rd} is a subsolution to the cooperative system \eqref{upper-RD}, we have $U(t,x)\leq \bar u(t)$, $V(t,x)\leq \bar v(t)$ for all $t\geq 0$, $x\in\R $. Setting $t=T$, we obtain
\[
		u(t_0,x)=U(T,x)\leq \bar u(T,x),\ \ v(t_0,x)=V(T,x)\leq \bar v(T,x)\quad \hbox{for all}\ \ x\in\R .
\]
Now we let $T\to\infty$. Then $(\bar u(T),\bar v(T))\to (u^*,v^*)$, hence
\begin{equation}\label{u(t,x)<u*}
	\sup_{x\in\R }u(t_0,x)\leq u^*,\quad 
	\sup_{x\in\R }v(t_0,x)\leq v^*.
\end{equation}
	
In order to obtain a lower estimate, let $(u^\ep(t),v^\ep(t))$ be the solution of \eqref{eq:syst-ode} with initial data $(u^\ep(0),v^\ep(0))=\ep (\varphi_A^u, \varphi_A^v)$, as in the proof of Lemma~\ref{lem:coop}, and let $t_1\geq 0$ be such that $(U(t,x), V(t,x))$ lies in the cooperative zone for all $t\geq t_1$. Then, choosing $\ep>0$ sufficiently small and arguing as in the proof of Lemma~\ref{lem:coop}, we obtain $u^\ep(t)<U(t+t_1,x)$ and $v^\ep(t)<V(t+t_1,x)$ for all $t\geq 0$, $x\in\R $. Now we set $t=T-t_1$. Then we have
\[
u^\ep(T-t_1)<U(T,x)=u(t_0,x), \ \ v^\ep(T-t_1)<V(T,x)=v(t_0,x)\quad \hbox{for all}\ \  x\in\R .	
\]
Now we let $T\to \infty$. Then $(u^\ep(T-t_1),v^\ep(T-t_1))\to (u^*,v^*)$, hence
\[
	\inf_{x\in\R }u(t_0,x)\geq u^*,\quad 
	\inf_{x\in\R }v(t_0,x)\geq v^*.
\]
Combining this with \eqref{u(t,x)<u*}, we obtain \eqref{u(t0,x)=u^*}, as desired.

\medskip
		\step \underbar{The Lyapunov case: $\max(r_u-\mu_u, r_v-\mu_v)\geq 0$.}

In this case we use a generalisation of the Lyapunov argument used in the proof of Lemma~\ref{lem:Lyapunov-ode}. Let $\mathcal F_u$, $\mathcal F_v$ be the functions defined in \eqref{eq:Lyapunov-func} and $K$ be the constant given by Lemma \ref{lem:Lyapunov-ode}, so that $\mathcal F^K(u, v):=\mathcal F_u(u)+K\mathcal F_v(v)$ is a Lyapunov function for the flow of the ODE \eqref{eq:syst-ode}. Define $w(t, x)=\mathcal F^K(u(t,x), v(t,x))$. 
Then, since $\mathcal F_u''(u)\geq 0$, $\mathcal F_v''(v)\geq 0$ for all $u,v$, $w$ satisfies:
\begin{align*}
	w_t-\sigma w_{xx}&=(u_t-\sigma u_{xx})\mathcal F_u'(u)+K(v_t-\sigma v_{xx})\mathcal F_v'(v)-\sigma(u_x^2\mathcal F_u''(u) + Kv_x^2\mathcal F_v''(v))\\
	&\leq -\kappa_u(u-u^*)^2-\left(\kappa_u-\frac{\mu_v}{u^*}+K\left(\kappa_v-\frac{\mu_u}{v^*}\right)\right)-K\kappa_v(v-v^*)^2\\
	&=- Q(u-u^*,v-v^*),
\end{align*}
where $Q$ is the quadratic form defined in \eqref{eq:QUV}. 
As we have shown in the proof of Lemma~\ref{lem:Lyapunov-ode}, $Q(u-u^*, v-v^*)>0$ whenever $(u, v)\neq (u^*, v^*)$. Since $-Q(u,v)\leq 0$, $w$ is a bounded entire subsolution to the heat equation, therefore it has to be a constant. 
And since $-Q(u,v)<0$ whenever $(u, v)\neq (u^*, v^*)$, the only possibility is $w(t, x)\equiv 0$, which implies $(u(t, x), v(t,x))\equiv (u^*, v^*)$. 
This establishes the claim of the theorem for the case $\max(r_u-\mu_u, r_v-\mu_v)\geq 0$.
	\end{stepping}

Combining Step 1 and Step 2 completes the proof of Theorem \ref{thm:entire-sol-hom}.
\end{proof}

\begin{proof}[Proof of Theorem \ref{thm:ltb}] 
As the formula $c^*_{R}=c^*_{L}=2\sqrt{\sigma\lambda_A}$ is already given in \eqref{c*-homo}, in what follows we focus on the assertion \eqref{u(t,x)-to-u*}.

We argue by contradiction. Suppose that \eqref{u(t,x)-to-u*} does not hold for  $(u(t,x), v(t,x))$. Then there exists $\ep>0$ and  sequences $t_n\to+\infty$ and $x_n\in\R$ such that $|x_n|\leq ct_n$ and that 
	\begin{equation*}
		\max\big(|u(t_n, x_n)-u^*|,|v(t_n, x_n)-v^*|\big)\geq\ep>0 \quad\hbox{for}\ \ n=1,2,3,\ldots.
	\end{equation*}
By the classical parabolic estimates, the sequence $(u(t+t_n, x),v(t+t_n, x)$ has a subsequence that converges locally uniformly to an entire solution $(u^\infty(t, x), v^\infty(t, x))$ which satisfies 
\begin{equation}\label{u-infty(0)-u*}
\max\big(|u^\infty(0, 0)-u^*|, |v^\infty(0, 0)-v^*|\big)\geq \ep.
\end{equation}
Now choose $c'$ such that $0<c<c'<c^*_R$. Then by Theorem~\ref{thm:hairtrigger}, there exists $\eta>0$ such that
\[
\liminf_{t\to\infty} \left[\inf_{|x|\leq c't}\min\big(u(t,x),v(t,x)\big)\right]\geq \eta.
\]
Since $|x_n|\leq ct_n$ and $c'>c$, we see from the above inequality that $u^\infty(t, x)\geq \eta$, $v^\infty(t, x)\geq \eta$ for all $t\in\R ,\,x\in\R $. Hence, by Theorem \ref{thm:entire-sol-hom}, we have $(u^\infty(t, x), v^\infty(t, x))\equiv (u^*, v^*)$, but this contradicts \eqref{u-infty(0)-u*}. This contradiction proves \eqref{u(t,x)-to-u*}. The proof of Theorem~\ref{thm:ltb} is complete.
\end{proof}

\subsection{Proof of homogenization}
\label{ss:homogenization-proof}

In this section we prove Theorem~\ref{thm:rapidosc} on the homogenization of the system \eqref{eq:rapidosc}. We start with a lemma concerning the homogenization of associated eigenproblems.

\begin{lemma}[Homogenization of the eigenproblems]\label{lem:homogenization-eigenproblem}
    For each $\lambda\in\R$, let $\big(k^\ep(\lambda), (\varphi^\ep,\psi^\ep)\big)$ be the principal eigenpair of \eqref{eq:lambda-periodic-principal-eigen} for the coefficients \eqref{rapid-osc-coeffs} under the following normalization condition:
    \begin{equation}\label{eq:hom-norm}
	\big(\Vert \big(\varphi^\ep, \psi^\ep)\Vert_{L^2(0, \ep)^2}\big)^2 = \int_0^\ep \big(\varphi^\ep(x)\big)^2+\big(\psi^\ep(x)\big)^2 \dd x = \ep \quad\ \ (0<\ep<1).
    \end{equation}
Then $k^\ep(\lambda)\to k^0(\lambda)$ as $\ep\to 0$ and the vector function $\big(\varphi^\ep, \psi^\ep\big)$ converges to the constant function $(\varphi^0, \psi^0)$ uniformly on $\R$, where $\big(k^0(\lambda), (\varphi^0, \psi^0)\big) $ denotes the principal eigenpair of \eqref{eq:lambda-periodic-eigen-homo} 
with the homogenized coefficients given in \eqref{eq:mean-coeffs} and under the normalization condition $(\varphi^0)^2+(\psi^0)^2=1$:
\begin{equation}\label{lambda-periodic-eigen-homogenized}
\begin{system}
	\relax & \big(\lambda^2\overline{\sigma}^H+\overline{r_u}\big)\varphi^0+\overline{\mu_v}\psi^0-\overline{\mu_u}\varphi^0=k^0(\lambda) \varphi^0, \\
	\relax & \big(\lambda^2\overline{\sigma}^H+\overline{r_v}\big)\psi^0+\overline{\mu_u}\varphi^0-\overline{\mu_v}\psi^0=k^0(\lambda) \psi^0.
\end{system}
\end{equation}
\end{lemma}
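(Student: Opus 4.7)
The plan is to rescale to the unit cell and carry out a standard two-scale expansion. Set $y:=x/\ep$ and define $\Phi^\ep(y):=\varphi^\ep(\ep y)$, $\Psi^\ep(y):=\psi^\ep(\ep y)$, which are $1$-periodic and, after multiplying \eqref{eq:lambda-periodic-principal-eigen2} by $\ep^2$, satisfy
\begin{equation*}
    (\sigma\Phi^\ep_y)_y - \ep\lambda\bigl(2\sigma\Phi^\ep_y+\sigma_y\Phi^\ep\bigr) + \ep^2\bigl[(\lambda^2\sigma+r_u-\mu_u)\Phi^\ep+\mu_v\Psi^\ep-k^\ep(\lambda)\Phi^\ep\bigr] = 0,
\end{equation*}
together with the symmetric equation for $\Psi^\ep$. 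The normalization \eqref{eq:hom-norm} becomes $\int_0^1[(\Phi^\ep)^2+(\Psi^\ep)^2]\,dy=1$ and \eqref{k-quadratic} gives a uniform bound on $k^\ep(\lambda)$. I extract a subsequence so that $k^\ep(\lambda)\to k_0$.

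I would next derive an energy estimate. Testing each rescaled equation against its own eigenfunction on the circle, the two first-order contributions cancel: integration by parts on the torus gives $\int \sigma_y(\Phi^\ep)^2\,dy = -2\int \sigma\Phi^\ep\Phi^\ep_y\,dy$, so $\int(2\sigma\Phi^\ep_y+\sigma_y\Phi^\ep)\Phi^\ep\,dy=0$. Summing the two resulting identities and using the uniform bounds on $k^\ep(\lambda)$ and on the $L^2$ norms yields
\begin{equation*}
    \int_0^1\sigma\bigl[(\Phi^\ep_y)^2+(\Psi^\ep_y)^2\bigr]\,dy = O(\ep^2).
\end{equation*}
By Poincar\'e-Wirtinger on the circle and the embedding $H^1(0,1)\hookrightarrow C^0([0,1])$, this forces $\|\Phi^\ep-\bar\Phi^\ep\|_{L^\infty}+\|\Psi^\ep-\bar\Psi^\ep\|_{L^\infty}=O(\ep)$, where $\bar\Phi^\ep,\bar\Psi^\ep$ denote the mean values. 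Extracting again, $\bar\Phi^\ep\to\varphi^0\geq 0$, $\bar\Psi^\ep\to\psi^0\geq 0$, with $(\varphi^0)^2+(\psi^0)^2=1$, and hence $(\Phi^\ep,\Psi^\ep)\to(\varphi^0,\psi^0)$ uniformly on $[0,1]$; the uniform convergence of $(\varphi^\ep,\psi^\ep)$ to $(\varphi^0,\psi^0)$ on $\R$ then follows from the $\ep$-periodicity of $\varphi^\ep,\psi^\ep$.

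Finally, I would identify the limit via a first-order corrector. The leading balance $(\sigma\Phi^\ep_y)_y=O(\ep)$ after one integration in $y$, combined with $\|\Phi^\ep_y\|_{L^1}=O(\ep)$, yields the pointwise expansion
\begin{equation*}
    \sigma(y)\Phi^\ep_y(y) = C^\ep + \ep\lambda\sigma(y)\Phi^\ep(y) + O(\ep^2)
\end{equation*}
for some constant $C^\ep$; dividing by $\sigma$ and enforcing the periodicity identity $\int_0^1\Phi^\ep_y\,dy=0$ pins down $C^\ep = -\ep\lambda\,\overline{\sigma}^H\bar\Phi^\ep+O(\ep^2)$, whence $\int_0^1\sigma\Phi^\ep_y\,dy = \ep\lambda\bar\Phi^\ep(\overline{\sigma}-\overline{\sigma}^H)+O(\ep^2)$. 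Integrating the rescaled equation for $\Phi^\ep$ over one period then kills the $(\sigma\Phi^\ep_y)_y$ contribution and, after integration by parts on $\int\sigma_y\Phi^\ep\,dy=-\int\sigma\Phi^\ep_y\,dy$, combines the two first-order terms into $-\ep\lambda\int\sigma\Phi^\ep_y\,dy$. Substituting the sharp estimate, dividing by $\ep^2$, and sending $\ep\to 0$ produces the crucial cancellation $\lambda^2\overline{\sigma}-\lambda^2(\overline{\sigma}-\overline{\sigma}^H)=\lambda^2\overline{\sigma}^H$, giving
\begin{equation*}
    (\lambda^2\overline{\sigma}^H+\overline{r_u}-\overline{\mu_u})\varphi^0+\overline{\mu_v}\psi^0 = k_0\varphi^0,
\end{equation*}
and symmetrically for $\psi^0$. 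Since $\overline{\mu_u},\overline{\mu_v}>0$, a nonnegative solution with $(\varphi^0)^2+(\psi^0)^2=1$ must have both components strictly positive, and the Perron-Frobenius theorem applied to the $2\times 2$ limit matrix with positive off-diagonal entries yields uniqueness of the normalized positive eigenpair. Hence $k_0=k^0(\lambda)$ and $(\varphi^0,\psi^0)$ is the principal eigenvector of \eqref{lambda-periodic-eigen-homogenized}, and since the limit is independent of the extracted subsequence the full family converges. The main obstacle is this corrector step: a naive passage to the limit in the integrated equation produces the arithmetic mean $\overline{\sigma}$ rather than $\overline{\sigma}^H$, and the $O(\ep)$ structure of the flux $\sigma\Phi^\ep_y$ must be tracked carefully to recover the correct effective diffusion.
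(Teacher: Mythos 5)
Your proof is correct, and it takes a genuinely different route from the paper's. The paper's argument works at the macroscopic scale $x$ and introduces the flux variables $\xi^\ep := \sigma^\ep\,(\varphi^\ep_x - \lambda\varphi^\ep)$, $\zeta^\ep := \sigma^\ep\,(\psi^\ep_x - \lambda\psi^\ep)$; it establishes uniform $H^1_{loc}$ bounds on $(\varphi^\ep,\psi^\ep,\xi^\ep,\zeta^\ep)$, extracts weakly convergent subsequences, and recovers the harmonic mean by the compensated-compactness mechanism $\varphi^{\ep}_x - \lambda\varphi^{\ep} = \tfrac{1}{\sigma^\ep}\,\xi^\ep \rightharpoonup (\overline{\sigma}^H)^{-1}\xi^0$ (weak-$*$ limit of $1/\sigma^\ep$ times the locally strong limit of $\xi^\ep$). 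You instead rescale to the unit cell, prove the quantitative energy estimate $\|\Phi^\ep_y\|_{L^2(0,1)} = O(\ep)$ from which the rescaled eigenfunctions are $O(\ep)$-close to constants in $L^\infty$, and then extract the effective diffusion by an explicit corrector expansion of the flux, where the harmonic mean enters through the determination of the constant of integration $C^\ep$ from the periodicity constraint $\int_0^1 \Phi^\ep_y\,dy = 0$. I checked the two first-order cancellations you rely on --- both in the energy identity and in the period-averaged equation, where $\int_0^1(2\sigma\Phi^\ep_y + \sigma_y\Phi^\ep)\,dy = \int_0^1\sigma\Phi^\ep_y\,dy$ --- as well as the $\lambda^2$ cancellation $\overline{\sigma} - (\overline{\sigma} - \overline{\sigma}^H) = \overline{\sigma}^H$, the positivity of $(\varphi^0,\psi^0)$ forced by $\overline{\mu_u},\overline{\mu_v}>0$, and the subsequence-independence argument; all are sound. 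Your approach yields explicit $O(\ep)$ rates at the cost of more careful bookkeeping of the flux; the paper's approach is shorter and sidesteps the corrector computation, since the oscillations are absorbed into the flux variable once and for all and the harmonic mean appears automatically from the strong-times-weak product.
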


\begin{proof}
We first note that, since the coefficients in \eqref{rapid-osc-coeffs} are $\ep$-periodic, the uniqueness of the principal eigenpair of \eqref{eq:lambda-periodic-principal-eigen} implies that $\big(\varphi^\ep, \psi^\ep\big)$ are also $\ep$-periodic.

Fix $\lambda\in\R$ arbitrarily. Then $k^\ep(\lambda) $ is uniformly bounded as $\ep$ varies. This follows from the inequalities \eqref{k-quadratic}, since the maxima and minima of $\sigma^\ep(x)$, $r_u^\ep(x)$, $r_v^\ep(x)$ do not depend on $\ep$. 
For notational simplicity, we denote these bounds (for a fixed $\lambda$) by $C_1, C_2$, that is, 
    \begin{equation}\label{eq:bound-keps}
	C_1\leq k^\ep(\lambda)\leq C_2.
    \end{equation}

    Next we show that, given any $R>0$, the family $(\varphi^\ep, \psi^\ep)$ is uniformly bounded in $H^1(-R, R)^2$ as $\ep$ varies.  Indeed, multiplying the first line of \eqref{eq:lambda-periodic-principal-eigen2} by $\varphi^\ep$ and integrating by parts, we have
	\begin{multline*}
    \int_0^\ep \sigma^\ep(x)\left(\varphi^\ep_x\right)^2 \dd x
    =\int_0^\ep\left(\lambda^2\sigma^\ep(x) + r_u^\ep(x)-\mu_u^\ep(x)-k^\ep(\lambda)\right)\left(\varphi^\ep\right)^2 \dd x
	    +\int_0^\ep \mu_v^\ep(x)\varphi^\ep\psi^\ep \dd x.
	\end{multline*}
    Since the coefficients $\sigma^\ep, r_u^\ep, \mu_u^\ep, \mu_v^\ep$, as well as $k^\ep(\lambda)$, are uniformly bounded, and since $\sigma^\ep(x)\geq \sigma_{\min}:=\min_{y\in[0,1]}\sigma(y)$, the above identity and the normalization condition \eqref{eq:hom-norm} imply 
    \[
    \int_0^\ep \left(\varphi^\ep_x\right)^2 \dd x={\mathcal O}(\ep).
    \]
    The same estimate holds for $\psi^\ep$. Thus, recalling again the normalization condition \eqref{eq:hom-norm}, we have
    \[
   \left(\Vert \big(\varphi^\ep, \psi^\ep\big)\Vert_{H^1(0, \ep)^2}\right)^2={\mathcal O}(\ep).
    \]
    In view of this and the $\ep$-periodicity of $(\varphi^\ep, \psi^\ep)$, we see that there exists a constant $C_3>0$ that is independent of $\ep$ such that 
    \begin{equation}\label{eq:bound-phi-psi-eps}
	\left(\Vert \big(\varphi^\ep, \psi^\ep\big)\Vert_{H^1(-R, R)^2}\right)^2 \leq 
	C_3 R \quad (0<\ep<1).
    \end{equation}

    Next we prove that the auxiliary functions $\xi^\ep(x):=\sigma^\ep(x)\big(\varphi^\ep_x(x)-\lambda \varphi^\ep(x)\big)$ and $\zeta^\ep(x):= \sigma^\ep(x)\big(\psi^\ep_x(x)-\lambda \psi^\ep(x)\big)$ are also uniformly bounded in $H^1(-R, R)$. Indeed \eqref{eq:lambda-periodic-principal-eigen2} {can} be rewritten as:
    \begin{equation}\label{eq:syst-rewritten}
	\left\{\begin{aligned}\relax
		-\xi^\ep_x=-\big(\sigma^\ep\big(\varphi^\ep_x-\lambda \varphi^\ep)\big)_x=-\lambda\sigma^\ep (\varphi^\ep_x-\lambda\varphi^\ep) &+\big(r_u^\ep(x)-\mu_u^\ep(x)\big)\varphi^\ep(x)+\mu_v^\ep\psi^\ep(x)\\
		&-k^\ep(\lambda)\varphi^\ep,\\
		-\zeta^\ep_x=-\big(\sigma^\ep\big(\psi^\ep_x-\lambda \psi^\ep)\big)_x=-\lambda\sigma^\ep (\psi^\ep_x-\lambda\psi^\ep) &+\big(r_v^\ep(x)-\mu_v^\ep(x)\big)\psi^\ep(x)+\mu_u^\ep\varphi^\ep(x)\\
		&-k^\ep(\lambda)\psi^\ep.
	\end{aligned}\right.
    \end{equation}
    This and \eqref{eq:bound-phi-psi-eps} imply that $\Vert\xi^\ep_x\Vert_{L^2(-R, R)}$ and $\Vert \zeta^\ep_x\Vert_{L^2(-R,R)}$ are uniformly bounded, hence
    \begin{equation}\label{eq:bound-xi-zeta-eps}
	\left(\Vert \big(\xi^\ep, \zeta^\ep\big)\Vert_{H^1(-R, R)^2}\right)^2 \leq C_4 R \quad (0<\ep<1)
    \end{equation}
for some constant $C_4>0$ that is independent of $\ep$.

Let $(\ep_n)$ be any sequence with $\ep_n\to 0$. By \eqref{eq:bound-keps}, \eqref{eq:bound-phi-psi-eps} and \eqref{eq:bound-xi-zeta-eps}, 
we can extract a subsequence, again denoted by $(\ep_n)$, such that $k^{\ep_n}(\lambda)\to k^0(\lambda)$ and that
\begin{equation}\label{conv-H1-weak}
	(\varphi^{\ep_n}, \psi^{\ep_n})\rightharpoonup(\varphi^0, \psi^0), \ \  
	(\xi^{\ep_n}, \zeta^{\ep_n})\rightharpoonup(\xi^0, \zeta^0) \ \ \ \hbox{as}\ \ n\to\infty\quad \hbox{weakly in } H^1_{loc}(\R)^2
\end{equation}
for some real number $k^0(\lambda)$ and $(\varphi^0, \psi^0), (\xi^0, \zeta^0)\in H^1_{loc}(\R)^2$. 

Since $H^1(-R, R)$ is compactly embedded in $C([-R, R])$, the convergence in \eqref{conv-H1-weak} is uniform on any interval $[-R,R]$. Furthermore, by 
the embedding $H^1\hookrightarrow C^{1/2}$, the functions $\varphi^{\ep_n}$ are uniformly $1/2$-H$\ddot{\rm o}$lder continuous. In view of this and the $\ep$-periodicity of $\varphi^\ep$, we see that $\max \varphi^\ep(x)-\min \varphi^\ep(x)={\mathcal O}(\ep^{1/2})$, hence $\varphi_0$ is a constant function. The same holds for $\psi^0$, $\xi^0$, $\zeta^0$. 

Since $\varphi^{\ep_n}\to\varphi^0$ uniformly and since $r_u^\ep(x):=r_u(\frac{x}{\ep})$ is bounded and $\ep$-periodic,
\[
r_u^{\ep_n}(x)\varphi^{\ep_n}-\overline{r_u}\varphi^0=
r_u^{\ep_n}(x)\left(\varphi^{\ep_n}-\varphi^0\right)+\left(r_u^{\ep_n}(x)-\overline{r_u}\right)\varphi^0\rightharpoonup 0 \quad \hbox{as}\ \ n\to\infty
\]
weakly in $L^2_{loc}(\R)$. 
Repeating the same argument and recalling that $(\xi^{\ep_n}, \zeta^{\ep_n})\rightharpoonup(\xi^0, \zeta^0)$ weakly in $H^1_{loc}(\R)^2$, we see that \eqref{eq:syst-rewritten} converges to the following system as $n\to\infty$ weakly in $L^2_{loc}(\R)^2$:
\begin{equation}\label{eq:limit-lambda-eigen}
\begin{cases}
0=-\xi^0_x= -\lambda \xi^0 + \big(\overline{r_u}-\overline{\mu_u}\big)\varphi^0  + \overline{\mu_v}\psi^0 - k^0(\lambda) \varphi^0,\\
0=-\zeta^0_x= -\lambda \zeta^0 + \big(\overline{r_v}-\overline{\mu_v}\big)\psi^0  + \overline{\mu_u}\varphi^0 - k^0(\lambda) \psi^0.
\end{cases}
\end{equation}
Here we used that fact that $\xi^0, \zeta^0$ are constant functions. Observe also that
    \[
    \varphi^{\ep_n}_x-\lambda \varphi^{\ep_n}=\frac{1}{\sigma^{\ep_n}(x)}\xi^\ep\rightharpoonup \int_0^1 \frac{\dd y}{\sigma(y)}\, \xi^0
    \]
weakly in $L^2_{loc}(\R)$. 
Thus $\xi^0=\overline{\sigma}^H \big(\varphi^0_x-\lambda \varphi^0\big)$, but since $\varphi^0_x=0$, we have $\xi^0=-\overline{\sigma}^H \lambda \varphi^0$, and similarly $\zeta^0 = -\overline{\sigma}^H \lambda \psi^0$. 
Substituting these into \eqref{eq:limit-lambda-eigen}, we obtain \eqref{lambda-periodic-eigen-homogenized}. The condition $(\varphi^0)^2+(\psi^0)^2=1$ follows from \eqref{eq:hom-norm}. 
Since the principal eigenpair of \eqref{lambda-periodic-eigen-homogenized} is unique, the limit $\big(k^0(\lambda),(\varphi^0,\psi^0)\big)$ does not depend on the choice of the sequence $(\ep_n)$. Hence $\big(k^\ep(\lambda),(\varphi^\ep,\psi^\ep)\big)$ converges to $\big(k^0(\lambda),(\varphi^0,\psi^0)\big)$ as $\ep\to 0$. This completes the proof of Lemma~\ref{lem:homogenization-eigenproblem}.
\end{proof}

\begin{lem}[Convergence of the spreading speeds]\label{lem:convergence-speeds}
Let the assumptions of Theorem~\ref{thm:rapidosc} hold and $k^\ep(\lambda)$ be as in Lemma~\ref{lem:homogenization-eigenproblem}. Then $\min_{\lambda\in\R}k^\ep(\lambda)>0$ for all sufficiently small $\ep>0$. Furthermore, \eqref{convergence-speeds} holds. 
\end{lem}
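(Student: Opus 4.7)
The plan is to combine Lemma~\ref{lem:homogenization-eigenproblem} with the uniform quadratic control \eqref{k-quadratic} on the family $\{k^\ep\}$. First I would identify the homogenized principal eigenvalue: applying \eqref{k(lambda)-homo} to the constant-coefficient system \eqref{eq:homogenized} gives
\[
k^0(\lambda) = \overline{\sigma}^H\lambda^2 + \lambda_A,
\]
so in particular $k^0(0)=\lambda_A>0$ by hypothesis. By Lemma~\ref{lem:homogenization-eigenproblem}, $k^\ep(\lambda)\to k^0(\lambda)$ pointwise on $\R$. Since every $k^\ep$ is convex by Proposition~\ref{prop:k(lambda)}\ref{item:k(lambda)concave}, this pointwise convergence automatically upgrades to locally uniform convergence on $\R$, a fact I would cite as a standard property of convex functions.

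Next, to verify $\min_{\lambda\in\R}k^\ep(\lambda)>0$ for small $\ep$, I would use the uniform lower bound $k^\ep(\lambda)\geq\sigma_{\min}\lambda^2+r_{\min}$ from \eqref{k-quadratic}. Observe that $\sigma_{\min}$ and $r_{\min}$ are independent of $\ep$, since the coefficients $\sigma^\ep, r_u^\ep, r_v^\ep$ in \eqref{rapid-osc-coeffs} are merely rescalings of the fixed functions $\sigma, r_u, r_v$. This forces every minimizer of $k^\ep$ to lie in a fixed compact interval $[-M,M]$. On $[-M,M]$ the locally uniform convergence yields $\min_{[-M,M]}k^\ep\to\min_{[-M,M]}k^0=\lambda_A$, which combined with the quadratic bound outside $[-M,M]$ gives $\min_{\R}k^\ep(\lambda)\to\lambda_A>0$ as $\ep\to0$. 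Thus $\min_{\lambda\in\R}k^\ep(\lambda)>\lambda_A/2>0$ for all sufficiently small $\ep$.

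For the spreading speeds, I would treat $c^*_{\ep,R}=\min_{\lambda>0}k^\ep(\lambda)/\lambda$ in detail (the case $c^*_{\ep,L}$ is symmetric, using that $k^0$ is even in $\lambda$). The main obstacle is controlling the location of the minimizer $\lambda^\ep_R>0$ uniformly in $\ep$. For the lower bound, the preceding paragraph gives $k^\ep(\lambda)\geq\lambda_A/2$ uniformly in $\ep$, so $k^\ep(\lambda)/\lambda\to+\infty$ as $\lambda\to 0^+$ uniformly in $\ep$; hence $\lambda^\ep_R$ is bounded below by some $a>0$ independent of $\ep$. For the upper bound, testing at $\lambda=1$ yields $c^*_{\ep,R}\leq k^\ep(1)\to\overline{\sigma}^H+\lambda_A$, and the bound $k^\ep(\lambda)/\lambda\geq \sigma_{\min}\lambda+r_{\min}/\lambda\geq \sigma_{\min}\lambda-|r_{\min}|$ for $\lambda\geq 1$ then confines $\lambda^\ep_R$ to an interval $[a,b]\subset(0,\infty)$ independent of $\ep$.

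Finally, on this fixed compact interval $[a,b]$ the locally uniform convergence $k^\ep\to k^0$ translates directly into the uniform convergence $k^\ep(\lambda)/\lambda\to k^0(\lambda)/\lambda$. Consequently,
\[
c^*_{\ep,R}=\min_{\lambda\in[a,b]}\frac{k^\ep(\lambda)}{\lambda}\ \longrightarrow\ \min_{\lambda\in[a,b]}\frac{k^0(\lambda)}{\lambda}=\min_{\lambda>0}\frac{\overline{\sigma}^H\lambda^2+\lambda_A}{\lambda}=2\sqrt{\overline{\sigma}^H\lambda_A},
\]
where the last equality is the elementary computation behind \eqref{c*-homo} applied to the homogenized problem. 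The identical argument with $\lambda<0$ gives $c^*_{\ep,L}\to 2\sqrt{\overline{\sigma}^H\lambda_A}$, completing the proof of \eqref{convergence-speeds}.
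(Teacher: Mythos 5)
Your proposal is correct and follows essentially the same route as the paper: identify $k^0(\lambda)=\overline{\sigma}^H\lambda^2+\lambda_A$ via \eqref{k(lambda)-homo}, upgrade the pointwise convergence of Lemma~\ref{lem:homogenization-eigenproblem} to locally uniform convergence using convexity, and use the quadratic bounds \eqref{k-quadratic} to confine the relevant minimizers to compact intervals independent of $\ep$ so that uniform convergence passes to the min. The only difference is cosmetic: you spell out explicitly the bounds $[a,b]$ on the minimizer $\lambda^\ep_R$ where the paper simply asserts that the infima are attained on a fixed interval $|\lambda|\leq M_1$.
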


\begin{proof}
Let $k^0(\lambda)$ be as in Lemma~\ref{lem:homogenization-eigenproblem}. Then, by \eqref{k(lambda)-homo}, $k^0(\lambda)=\overline{\sigma}^H\lambda^2+\lambda_A$. Since we are assuming $\lambda_A>0$, we have $k^0(\lambda)>0$ for all $\lambda\in\R$. 
Next we note that the convergence $k^\ep(\lambda)\to k^0(\lambda)$ in Lemma~\ref{lem:homogenization-eigenproblem} is locally uniform in $\lambda\in\R$. This is because pointwise convergence of a sequence of convex functions is uniform on bounded sets (see \cite[Theorem 10.8]{Rockafellar-1970}). 
Moreover, by \eqref{k-quadratic}, there exists $M>0$ such that $k^\ep(\lambda)>0$ for $|\lambda|>M$ for any $\ep>0$. Thus it suffices to show $k^\ep(\lambda)>0$ for $|\lambda|\leq M$. The uniform convergence $k^\ep(\lambda)\to k^0(\lambda)$ on $-M\leq \lambda \leq M$ proves this claim.

Next we prove \eqref{convergence-speeds}. Let $\ep$ be small enough so that $\min_{\lambda\in\R}k^\ep(\lambda)>0$. By \eqref{k-quadratic}, there exists $M_1>0$ independent of $\ep$ such that $\inf_{\lambda>0} k^\ep(\lambda)/\lambda$ and $\inf_{\lambda<0} k^\ep(\lambda)/|\lambda|$ are both attained on the interval $|\lambda|\leq M_1$. Since $k^\ep(\lambda)\to k^0(\lambda)$ uniformly on $|\lambda|\leq M_1$, the claim \eqref{convergence-speeds} follows. 
\end{proof}

\begin{lem}[Homogenization limit of entire solutions]\label{lem:rapid-osc-lim}
	Let the assumptions of Theorem~\ref{thm:rapidosc} hold. 
	For each small $\ep>0$, let 
$(u^\ep(t, x), v^\ep(t, x)) $ be an entire solution to \eqref{eq:rapidosc} that is bounded from above and from below by positive constants, that is, $m^\ep\leq u^\ep(t,x)+v^\ep(t,x)\leq M^\ep$ for all $t\in\R $, $x\in\R $ and for some constants $M^\ep, m^\ep>0$. 
Then, as $\ep\to 0$, $(u^\ep(t, x), v^\ep(t, x))$ converges locally uniformly to the unique positive stationary state $(u^*, v^*)$ of the homogenized problem \eqref{eq:syst-hom-rd} with $\sigma$, $r_u$, $r_v$, $\kappa_u$, $\kappa_v$, $\mu_u$, $\mu_v$ replaced by $\overline{\sigma}^H$, $\overline{r_u}$, $\overline{r_v}$, $\overline{\kappa_u}$, $\overline{\kappa_v}$, $\overline{\mu_u}$, $\overline{\mu_v}$.
\end{lem}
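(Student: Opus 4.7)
My plan is to extract a subsequence $\ep_n\to 0$ along which $(u^{\ep_n},v^{\ep_n})$ converges locally uniformly to some pair $(u^0,v^0)$, to identify this limit as a uniformly positive bounded entire solution of the homogenized system \eqref{eq:syst-hom-rd} with coefficients $\overline{\sigma}^H,\overline{r_u},\overline{r_v},\overline{\kappa_u},\overline{\kappa_v},\overline{\mu_u},\overline{\mu_v}$, and then to invoke the Liouville-type Theorem~\ref{thm:entire-sol-hom} to conclude that $(u^0,v^0)\equiv(u^*,v^*)$. Because every subsequential limit must equal $(u^*,v^*)$, the entire family then converges.

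First I would establish $\ep$-uniform upper and lower bounds on $(u^\ep,v^\ep)$. The upper bound $u^\ep+v^\ep\le\overline{K}$ is immediate from Proposition~\ref{prop:uniform-bound}, since $\overline{K}=r_{\max}/\kappa_{\min}$ depends only on the $L^\infty$-norms of the coefficients and is $\ep$-independent. For the lower bound, Lemma~\ref{lem:homogenization-eigenproblem} gives $k^\ep(\lambda)\to k^0(\lambda)$ locally uniformly, with $k^0(0)=\lambda_A>0$. By Theorem~\ref{thm:comp-dir-per} and the monotonicity of $R\mapsto\lambda_1^{R,\ep}$, I may fix $R$ large and $\ep_0$ small enough that $\lambda_1^{R,\ep}>\lambda_A/2>0$ for all $\ep<\ep_0$. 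The construction in the proof of Theorem~\ref{thm:hairtrigger} then provides, for each such $\ep$, an $\ep$-periodic positive stationary subsolution $(P^{R,\ep},Q^{R,\ep})$ of the auxiliary system \eqref{eq:auxiliary-below}, obtained as a monotone limit starting from a small multiple of the Dirichlet principal eigenfunction on $[-R,R]$. Repeating the homogenization argument of Lemma~\ref{lem:homogenization-eigenproblem} for this stationary problem shows that $(P^{R,\ep},Q^{R,\ep})$ converges as $\ep\to 0$ to the positive constant stationary subsolution of the homogenized auxiliary system, which yields a uniform lower bound $u^\ep(t,x),v^\ep(t,x)\ge\eta>0$ with $\eta$ independent of $\ep$.

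Next, since \eqref{eq:rapidosc} is uniformly parabolic in divergence form with bounded measurable coefficients (ellipticity constants independent of $\ep$), De Giorgi–Nash–Moser provides $\ep$-uniform interior H\"older bounds on $(u^\ep,v^\ep)$. Along a subsequence $\ep_n\to 0$ the convergence $(u^{\ep_n},v^{\ep_n})\to (u^0,v^0)$ is locally uniform, with $\eta\le u^0+v^0\le\overline{K}$. To identify the equation satisfied by $(u^0,v^0)$ I would borrow the flux approach from Lemma~\ref{lem:homogenization-eigenproblem}: set $\xi^\ep:=\sigma^\ep u^\ep_x$ and $\zeta^\ep:=\sigma^\ep v^\ep_x$; rewriting the PDE gives $L^2_{loc}$-bounds on $\xi^\ep_x,\zeta^\ep_x$, while local energy estimates give $L^2_{loc}$-bounds on the fluxes themselves, so $\xi^\ep,\zeta^\ep$ converge (up to a further subsequence) strongly in $L^2_{loc}$ to limits $\xi^0,\zeta^0$. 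The identity $u^\ep_x=\xi^\ep/\sigma^\ep$, together with the weak-$*$ convergence $1/\sigma^\ep\rightharpoonup 1/\overline{\sigma}^H$ in $L^\infty$, forces in the sense of distributions $\xi^0=\overline{\sigma}^H u^0_x$ and $\zeta^0=\overline{\sigma}^H v^0_x$. The zero-order nonlinear terms pass to their homogenized averages by standard weak-strong reasoning (the slowly-varying factors such as $u^\ep(u^\ep+v^\ep)$ converge strongly and locally uniformly, while the rapidly oscillating coefficients $r_u^\ep,\kappa_u^\ep,\mu_u^\ep,\ldots$ converge weakly-$*$ to their means). Hence $(u^0,v^0)$ is a bounded, uniformly positive entire solution of the homogenized \eqref{eq:syst-hom-rd}, and Theorem~\ref{thm:entire-sol-hom} forces $(u^0,v^0)\equiv(u^*,v^*)$.

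The main technical obstacle is the passage to the limit in the divergence-form diffusion term: one cannot directly pass to the limit in $\sigma^\ep u^\ep_x$ since neither factor converges strongly, and the naive ``average'' of $\sigma^\ep$ is not what appears in the limit equation. The flux viewpoint above sidesteps this by exploiting the extra derivative in $L^2_{loc}$ that $\xi^\ep$ inherits from the equation, which gives $H^1_{loc}$-compactness of $\xi^\ep$; the harmonic mean $\overline{\sigma}^H$ then emerges automatically from the relation $u^\ep_x=\xi^\ep/\sigma^\ep$. This is exactly the standard one-dimensional homogenization mechanism already used in Lemma~\ref{lem:homogenization-eigenproblem}, adapted here to the nonlinear, time-dependent setting.
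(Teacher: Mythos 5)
Your overall roadmap matches the paper's: establish $\ep$-independent upper and lower bounds on the entire solution, use De~Giorgi--Nash--Moser compactness and a flux formulation to identify every subsequential limit as a bounded, uniformly positive entire solution of the homogenized system, and then invoke the Liouville theorem~\ref{thm:entire-sol-hom}. Steps~1 and~3 are essentially the paper's, up to small technical choices, but you should note that Proposition~\ref{prop:uniform-bound} only bounds Cauchy solutions (it gives $\limsup_{t\to\infty}(u+v)\le\overline K$); for an entire solution one must translate back by $T$ and send $T\to\infty$, as the paper does. Likewise, in Step~3 the claim that ``rewriting the PDE gives $L^2_{loc}$-bounds on $\xi^\ep_x,\zeta^\ep_x$'' silently assumes an $\ep$-uniform $L^2_{loc}$ bound on $u^\ep_t$; this is obtainable from a local energy estimate (test against $u^\ep_t\chi$), but the paper sidesteps it by first mollifying in time, setting $u^\ep(\phi):=\int u^\ep(t,x)\phi(t)\,\dd t$, so that the time derivative becomes a bounded zero-order term $u^\ep(\phi_t)$.

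The genuine gaps are in your Step~2. First, the hair-trigger effect and the stationary states $(P^{R,\ep},Q^{R,\ep})$ are built for the Cauchy problem and give $\liminf_{t\to\infty}$ statements; you need to say how this yields a pointwise bound $u^\ep(t,x)\ge\eta$ for \emph{all} $t\in\R$ on an entire solution. This requires either a backward-translation argument (shift the entire solution by $t_0-T$, squeeze $\alpha\varphi^R\leq u^\ep(t_0-T,\cdot)$ using the uniform positivity, evolve for time $T$ and send $T\to\infty$) or the paper's direct sliding argument defining $\alpha^*:=\sup\{\alpha:\alpha\varphi^\ep\le u^\ep,\ \alpha\psi^\ep\le v^\ep\}$ and using the strong maximum principle on a limiting entire solution. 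Second, the $(P^{R,\ep},Q^{R,\ep})$ are Dirichlet stationary states on $[-R,R]$, not $\ep$-periodic; and the claim that one may ``fix $R$ large and $\ep_0$ small so that $\lambda_1^{R,\ep}>\lambda_A/2$ for all $\ep<\ep_0$'' needs a separate homogenization argument for $\lambda_1^{R,\ep}$ as $\ep\to 0$ at fixed $R$ (it does not follow from Theorem~\ref{thm:comp-dir-per} alone, since the $R$ needed in that theorem to approach $\min_\lambda k^\ep(\lambda)$ a priori depends on $\ep$). The paper avoids both issues by working instead with the $\ep$-periodic principal eigenpair $(\varphi^\ep,\psi^\ep)$ at $\lambda=0$, whose eigenvalue $\lambda_1^\ep=k^\ep(0)$ is already proved in Lemma~\ref{lem:homogenization-eigenproblem} to converge to $\lambda_A>0$, and by choosing $\underline\alpha^\ep=\min(\lambda_1^\ep K_1,K_2)$ so that $(\underline\alpha^\ep\varphi^\ep,\underline\alpha^\ep\psi^\ep)$ is both a strict subsolution and lies in the cooperative zone. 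Your route can be completed, but it costs an extra homogenization step and a more delicate comparison that you have not spelled out.
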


\begin{proof}
	We divide the proof in three steps.
	
\vskip 3pt
	\begin{stepping}
		\step  \underbar{Uniform upper bound.}

\vskip 3pt
We first derive a uniform upper bound for $(u^\ep(t, x), v^\ep(t, x)) $ that is independent of $\ep>0$. 
This is done by slightly modifying the proof of Proposition~\ref{prop:uniform-bound}. Let $r_{\max}$, $\kappa_{\min}$ and $\overline{K}:=r_{\max}/\kappa_{\min}$ be the constants defined in \eqref{rmax-kappamin} for the coefficients of the system \eqref{eq:rapidosc}. Then these constants do not depend on $\ep$. As in \eqref{w}, 
$w^\ep(t,x):=u^\ep(t,x)+v^\ep(t,x)$ 
satisfies the inequality
\begin{equation}\label{wep}
w^\ep_t \leq \big(\sigma^\ep(x) w^\ep_{x}\big)_x+\big(r_{\max}-\kappa_{\min} w^\ep\big)w^\ep.
\end{equation}

Next let $W(t)$ be a solution of the following ODE problem:	
\[
W_t=\big(r_{\max}-\kappa_{\min} W\big)W, \quad \ W(0)=M^\ep.
\]
Fix $t_0\in\R $ arbitrarily, and let $T>0$. Then, since $w^\ep(t_0-T,x)\leq M^\ep=W(0)$, the comparison principle implies $w^\ep(t+t_0-T,x)\leq W(t)$ for all $t\geq 0$ and $x\in\R$. Setting $t=T$, we obtain
\[
w^\ep(t_0,x)\leq W(T) \quad\hbox{for all}\ \ x\in\R.
\]
The right-hand side of the above inequality converges to $r_{\max}/\kappa_{\min}$. Since $t_0$ is arbitrary, we get
\begin{equation}\label{upper-bound}
u^\ep(t,x)+v^\ep(t,x)=w^\ep(t,x)\leq \frac{r_{\max}}{\kappa_{\min}}:=\overline{K}\quad \hbox{for all}\ \ t\in\R,\ x\in\R.
\end{equation}

\vskip 3pt
\step \underbar{Uniform lower bound.}

\vskip 3pt
Here we derive a uniform lower bound for $(u^\ep(t, x), v^\ep(t, x)) $ that is independent of $\ep>0$. 
Let $\big(\lambda_1^\ep, (\varphi^\ep(x)>0,\psi^\ep(x)>0)\big) $ be the principal eigenpair associated with the eigenproblem:
		\begin{equation}\label{eigenproblem-lower-bound}
			\begin{system}
				\relax &(\sigma^\ep(x)\varphi^\ep_{x})_x+(r_u^\ep(x)-\mu_u^\ep(x))\varphi^\ep(x)+\mu_v^\ep(x)\psi^\ep(x) =\lambda_1^\ep\varphi^\ep(x)\\
				\relax &(\sigma^\ep(x)\psi^\ep_{x})_x+ \mu_u^\ep(x)\varphi^\ep(x)+(r^\ep_v(x)-\mu_v^\ep(x))\psi^\ep(x) =\lambda_1^\ep\psi^\ep(x),
			\end{system}
		\end{equation}
		under the $\ep$-periodic boundary conditions, and normalized  
		as $\sup_{x\in\R}(\varphi^\ep(x)+\psi^\ep(x))=1$. 
		
		Next let ${\mathcal F}^\ep(u,v)$, ${\mathcal G}^\ep(u,v)$ denote the right-hand side of the system \eqref{eq:rapidosc}, namely
		\[
		\begin{system}
			\relax & {\mathcal F}^\ep(u,v):=(\sigma^\ep(x) u_{x})_x+(r_u^\ep(x)-\kappa_u^\ep(x)(u+v))u+\mu_v^\ep(x)v-\mu_u^\ep(x)u, \\
			\relax & {\mathcal G}^\ep(u,v):=(\sigma^\ep(x) v_{x})_x+(r_v^\ep(x)-\kappa_v^\ep(x)(u+v))v+\mu_u^\ep(x)u-\mu_v^\ep(x)v.
		\end{system}		
		\]
		Then it is easily seen that, for any constant $\alpha>0$, 
		\begin{equation}\label{FG(alpha)}
		\begin{system}
		\relax & {\mathcal F}^\ep(\alpha\varphi^\ep,\alpha\psi^\ep)=\big(\lambda_1^\ep -\alpha\kappa_u^\ep(x)(\varphi^\ep+\psi^\ep)\big)\varphi^\ep\geq \big(\lambda_1^\ep -\alpha\kappa_u^\ep(x)\big)\varphi^\ep ,\\
		\relax & {\mathcal G}^\ep(\alpha\varphi^\ep,\alpha\psi^\ep)=\big(\lambda_1^\ep -\alpha\kappa_v^\ep(x))(\varphi^\ep+\psi^\ep)\big)\psi^\ep\geq\big(\lambda_1^\ep -\alpha\kappa_v^\ep(x)\big)\psi^\ep.
		\end{system}
		\end{equation}
		
		Now we claim that the following inequalities hold:
		\begin{equation}\label{lower-bound}
		u^\ep(t,x)\geq \underline{\alpha}^\ep \varphi^\ep(x),\ \ v^\ep(t,x)\geq \underline{\alpha}^\ep\psi^\ep(x)
		\quad \hbox{for all}\ \ t\in\R,\; x\in\R,
		\end{equation}
		where 
		$\underline{\alpha}^\ep=\min(\lambda_1^\ep K_1,K_2)$ 
		and 
		\[
		K_1=\frac{{\c1}}{\max\big(\max_{y\in[0,1]}\kappa_u(y),\max_{y\in[0,1]}\kappa_v(y)\big)},
		\ \  
		K_2=\min\left(\min_{y\in[0,1]}\frac{\mu_v(y)}{\kappa_u(y)}, \min_{y\in[0,1]}\frac{\mu_u(y)}{\kappa_v(y)}\right).
		\]
		In order to prove \eqref{lower-bound}, we define
		\begin{equation*}
			\alpha^*:=\sup\left\{\alpha>0 \,:\, \alpha\varphi^\ep(x)\leq u^\ep(t, x),\,\alpha\psi^\ep(x)\leq v^\ep(t, x)\ \,(\forall (t,x)\in \R\times\R)\right\}.
		\end{equation*}		
		Since $u^\ep(t, x), v^\ep(t, x)$ are bounded from below, $\alpha\varphi^\ep(x)\leq u^\ep(t, x)$, $\alpha\psi^\ep(x)\leq v^\ep(t, x)$ if $\alpha>0$ is sufficiently small, therefore the quantity $\alpha^*$ is well-defined. All we have to show is that $\alpha^*\geq \underline{\alpha}^\ep$. 

We argue by contradiction. Suppose that $\alpha^*< \underline{\alpha}^\ep$. By the definition of $\alpha^*$, we have $\alpha^*\varphi^\ep(x)\leq u^\ep(t, x),$ and $\alpha^*\psi^\ep(x)\leq v^\ep(t, x)$ for all $(t,x)\in\R\times\R$, and there exists a sequence $(t_n,x_n)\in\R\times\R$ such that either $u^\ep(t_n, x_n)-\alpha^*\varphi^\ep(x_n)\to 0\;(n\to\infty)$ or $v^\ep(t_n, x_n)-\alpha^*\psi^\ep(x_n)\to 0\;(n\to\infty)$. 
Let $m_n\,(n\in{\mathbb N})$ be the integers such that $m_n\ep\leq x_n<(m_n+1)\ep\ (n=1,2,3,\ldots)$. Replacing by a subsequence if necessary, we may assume that $x_n-m_n\ep\to x^*$ as $n\to\infty$ for some $x^*\in[0,\ep]$ and that $\big(u^\ep(t+t_n,x+m_n \ep),v^\ep(t+t_n, x+m_n \ep)\big)$ converges locally uniformly to an entire solution $\big(U^\ep(t,x),V^\ep(t,x)\big)$ of \eqref{eq:rapidosc} as $n\to\infty$. By the construction, $\big(U^\ep(t,x),V^\ep(t,x)\big)$ satisfies
\[
\begin{split}
& \alpha^*\varphi^\ep(x)\leq U^\ep(t, x), \ \ \alpha^*\psi^\ep(x)\leq V^\ep(t, x)\ \ \hbox{for all}\ \  (t,x)\in\R\times\R,\\ 
& \quad \hbox{and}\ \ \alpha^*\varphi^\ep(x^*)= U^\ep(0, x^*)\ \ \hbox{or}\ \  \alpha^*\psi^\ep(x^*)\leq V^\ep(0, x^*).
\end{split}
\]		
This, however, contradicts the strong maximum principle, since $\alpha^*\leq 
\lambda_1^\ep K_1$ and \eqref{FG(alpha)} imply that $(\alpha^*\varphi^\ep, \alpha^*\psi^\ep)$ is a subsolution of \eqref{eq:rapidosc}, and that $\alpha^*<K_2$ implies that $(\alpha^*\varphi^\ep, \alpha^*\psi^\ep)$ lies in the interior of the cooperative zone defined in \eqref{cooperative-zone}. This contradiction proves \eqref{lower-bound}. 

It remains to derive from \eqref{lower-bound} a uniform lower bounded for $u^\ep(t,x), v^\ep(t,x)$ that is independent of $\ep>0$. 
First we remark that \eqref{eigenproblem-lower-bound} is a special case of the eigenproblem treated in Lemma~\ref{lem:homogenization-eigenproblem} for $\lambda=0$, so the above eigenvalue $\lambda_1^\ep$ coincides with $k^\ep(0)$ in Lemma~\ref{lem:homogenization-eigenproblem}. Therefore $\lambda_1^\ep=k^\ep(0)\to k^0(0)$ as $\ep\to 0$, and, by \eqref{lambda_A=lambda-per}, $k^0(0)=\lambda_A$, where $\lambda_A$ denotes the largest eigenvalue of the matrix $A$ in \eqref{matrixA} with the entries $\overline{r_u}$, $\overline{r_v} $, $\overline{\mu_u}$ and $\overline{\mu_v}$. Consequently 
\[
\underline{\alpha}^\ep\to \min\left(\lambda_A K_1, K_2\right)\quad\hbox{as}\ \ \ep\to 0.
\]
Note also that, since $\varphi^\ep, \psi^\ep$ that appear in \eqref{lower-bound} 
are normalized by the condition $\max(\varphi^\ep(x)+\psi^\ep(x)) = 1$,
it is clear that
\begin{equation*}
    \varphi^\ep(x) \to  \dfrac{\varphi^0}{\varphi^0+\psi^0} \ \ \hbox{and}\ \ 
    \psi^\ep(x) \to  \dfrac{\psi^0}{\varphi^0+\psi^0} \ \ \hbox{as} \ \ \ep\to 0, \ \ \hbox{uniformly on}\ \ \R.
\end{equation*}
Combining these, together with \eqref{lower-bound}, we see that $(u^\ep, v^\ep)$ is uniformly bounded below by a positive constant that is independent of $\ep$, for all sufficiently small $\ep>0$.

\vskip 3pt
		\step \underbar{Convergence of $(u^\ep, v^\ep)$.}
\vskip 3pt
		We first remark that, since $(u^\ep, v^\ep)$ is uniformly bounded, the classical estimates for parabolic equations in divergence form with discontinuous coefficients (see {\it e.g.} \cite[Chapter III Theorem 10.1]{Lad-Sol-Ura-68}) imply that $(u^\ep, v^\ep)$ is locally uniformly bounded in $C^\alpha (\mathbb R\times\mathbb R)$, {\it i.e.} for any  $T>0$ an $R>0$ there exists $C>0$ (independent of $\ep$) such that 
		\begin{equation*}
			\max\left(\Vert u^\ep\Vert_{C^\alpha([-T, T]\times [-R, R])}, \Vert v^\ep\Vert_{C^\alpha([-T, T]\times [-R, R])}\right) \leq C.
		\end{equation*}
		Then the diagonal argument allows us to extract a sequence $\ep_n\to 0$ along which $(u^{\ep}, v^\ep)$ converges locally uniformly in $C^{\alpha/2}(\mathbb R^2)$ to a limit $(u, v)$. It is then classical that $(u, v)$ satisfies weakly: 
		\begin{equation}\label{eq:homogenized-parabolic}
			\begin{system}
				\relax &u_t=\overline{\sigma}^H u_{xx}+(\overline{r_u}-\overline{\kappa_u}(u+v))u+\overline{\mu_v}v-\overline{\mu_u}u \\
				\relax &v_t=\overline{\sigma}^H v_{xx}+(\overline{r_v}-\overline{\kappa_v}(u+v))v+\overline{\mu_u}u-\overline{\mu_v}v.
			\end{system}
		\end{equation} 
		Let us explain briefly how to obtain \eqref{eq:homogenized-parabolic} rigorously. Since $u^\ep$ and $v^\ep$ converge locally uniformly to their limit, it is also the case for  $u^\ep(u^\ep+v^\ep)$ and $v^\ep(u^\ep+v^\ep)$; therefore, except for $(\sigma^\ep u^\ep_x)_x$ and $(\sigma^\ep v^\ep_x)_x$, all the terms on the right-hand side of \eqref{eq:rapidosc} converge weakly to the corresponding term in the homogenized equation \eqref{eq:homogenized-parabolic}. 
		To show the convergence of $(\sigma^\ep u^\ep_x)_x$  to $\overline{\sigma}^H u_{xx}$, 
		let us fix $\phi\in C^\infty_0(\R)$ and define $u^\ep(\phi):=\int_{\R} u^\ep(t,x)\phi(t) \dd t$. Then
		\begin{equation*}
		    -\big(\sigma^\ep\big(u^\ep (\phi)\big)_x\big)_x = (r_u^\ep - \mu_u^\ep)u^\ep(\phi)\kappa_u^\ep u(\phi) - \kappa_u^\ep u^\ep\big(\phi(u^\ep+v^\ep)\big) + \mu_v v^\ep(\phi) + u^\ep(\phi_t).
		\end{equation*} 
		In particular, $\xi^\ep(\phi)=\sigma^\ep \big(u^\ep(\phi)\big)_x$ is bounded in $H^1_{loc}$ independently of $\ep$; hence up to the extraction of a subsequence,  $\xi^\ep(\phi)$ converges strongly in $L^2_{loc}$ (and weakly in $H^1_{loc}$) to some $\xi^0(\phi)$, and $u^\ep(\phi)_x =\frac{1}{\sigma^\ep}\xi^\ep(\phi) \rightharpoonup (\overline{\sigma}^H)^{-1} \xi^0(\phi)$ which shows that $\xi^0(\phi)=\overline{\sigma}^H u^0(\phi)_x$. This establishes the first line of \eqref{eq:homogenized-parabolic} rigorously. 
The second line can be treated similarly. 
Hence $(u, v)$ satisfies \eqref{eq:homogenized-parabolic} in a weak sense.
Parabolic regularity then implies that $(u(t,x), v(t,x))$ is in fact a classical entire solution to \eqref{eq:syst-hom-rd}. Since $(u(t,x), v(t,x))$ is bounded from below (by Step 1), Theorem \ref{thm:entire-sol-hom}  shows that $u(t,x)\equiv u^*$ and $v(t,x)\equiv v^*$. 
Finally, since the limit $(u(x), v(x))\equiv (u^*,v^*)$ does not depend on the choice of the sequence $\ep_n\to 0$, we have $(u^\ep,v^\ep)\to (u^*,v^*)$ as $\ep\to 0$. The lemma is proved.
	\end{stepping}
\end{proof}

Next we discuss the linearized stability of the equilibrium point $(u^*,v^*)$ of the homogeneous system. The linearized equation of \eqref{eq:syst-hom-rd} around $(u^*,v^*)$ is given in the following form: 
	\begin{equation}\label{eq:syst-hom-linearized}
		\begin{system}
		    \relax &\varphi_t-{\sigma}\varphi_{xx} = ({r_u}-{\mu_u})\varphi + {\mu_v}\psi  - {\kappa_u}(2u^*+v^*)\varphi - {\kappa_u}u^*\psi , & t>0, x\in\R, \\ 
			\relax &\psi_t-{\sigma} \psi_{xx}={\mu_u}\varphi + ({r_v}-{\mu_v})\psi - {\kappa_v}v^*\varphi - {\kappa_v}(u^*+2v^*)\psi, & t>0, x\in\R,\\
			&\varphi(t=0, x)=\varphi_0(x), ~\psi(t=0, x)=\psi_0(x), & x\in\R.
		\end{system}
    \end{equation}

\begin{lemma}[Linear stability of the equilibrium]\label{lem:linear-stability}
Let the assumptions of Theorem~\ref{thm:rapidosc} hold. Then any solution $(\varphi, \psi)$ of the linear parabolic system \eqref{eq:syst-hom-linearized} with $\big(\varphi_0(x), \psi_0(x)\big)\in BUC(\R)^2$, converges uniformly to zero as $t\to+\infty$:
	\begin{equation*}
	    \lim_{t\to+\infty} \max\big(\Vert \varphi(t, \cdot)\Vert_{\infty}, \Vert \psi(t, \cdot)\Vert_{\infty}\big)=0.
	\end{equation*}
\end{lemma}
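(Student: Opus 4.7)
I would exploit the fact that system \eqref{eq:syst-hom-linearized} has constant coefficients in both $t$ and $x$. Setting $U := (\varphi, \psi)^T$, the system reads
\begin{equation*}
    U_t = \sigma U_{xx} + B U, \qquad U(0,\cdot) = (\varphi_0, \psi_0)^T \in \BUC(\R)^2,
\end{equation*}
where $B = D_{(u^*, v^*)} f$ is the Jacobian matrix whose entries $a, b, c, d$ are given in \eqref{eq:abcd}. By Lemma \ref{lem:stat-ode-stability} one has $\tr B = a+d < 0$ and $\det B = ad-bc > 0$, so both eigenvalues of $B$ have strictly negative real parts. Standard linear algebra then provides constants $C, \omega > 0$ (depending only on the entries of $B$) such that $\|e^{tB}\|_{\mathrm{op}} \leq C e^{-\omega t}$ for every $t \geq 0$.

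The main observation is that the matrix semigroup $(e^{tB})_{t\geq 0}$ commutes with the componentwise scalar heat semigroup. Indeed, setting $V(t,x) := e^{-tB} U(t,x)$ and differentiating, using that $B$ commutes with its own exponential, one obtains $V_t = \sigma V_{xx}$ with $V(0,\cdot)=U(0,\cdot)$. Hence $V(t,\cdot) = G_t * U(0, \cdot)$, where $G_t(x):=(4\pi\sigma t)^{-1/2} \exp(-x^2/(4\sigma t))$ is the Gaussian heat kernel (convolution taken componentwise). Therefore $U$ admits the closed-form representation
\begin{equation*}
    U(t,\cdot) = e^{tB}\bigl(G_t * U(0,\cdot)\bigr), \qquad t>0.
\end{equation*}

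Since $\|G_t * f\|_\infty \leq \|f\|_\infty$ for every $f \in L^\infty(\R)$, combining the two estimates yields
\begin{equation*}
    \max\bigl(\|\varphi(t,\cdot)\|_\infty, \|\psi(t,\cdot)\|_\infty\bigr) \leq \|U(t,\cdot)\|_\infty \leq C e^{-\omega t}\, \|U(0,\cdot)\|_\infty \xrightarrow[t\to+\infty]{} 0,
\end{equation*}
which is exactly the claimed uniform convergence (in fact \emph{exponential} uniform decay). There is no substantial obstacle in this argument: the commutation step reduces the problem to a scalar heat semigroup estimate modulo multiplication by the decaying matrix $e^{tB}$, and this is what makes the constant-coefficient situation tractable. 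The $\BUC$ regularity of the initial data plays no role in the decay itself (bare $L^\infty$ would suffice); it only ensures that the initial value is attained classically.
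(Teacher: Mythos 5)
Your proof is correct and noticeably more elementary than the one in the paper. The key observation---that in the constant-coefficient setting the diffusion operator $\sigma\partial_{xx} I$ and the zeroth-order matrix $B$ commute, so the solution factorizes as $U(t,\cdot)=e^{tB}\bigl(G_t*U(0,\cdot)\bigr)$---reduces everything to two one-line facts: $\|G_t*f\|_\infty\le\|f\|_\infty$ and $\|e^{tB}\|\le Ce^{-\omega t}$, the latter coming from $\operatorname{tr} B<0$, $\det B>0$ in Lemma~\ref{lem:stat-ode-stability}. The paper instead treats $\mathcal A=\sigma\partial_{xx}I+B$ as an abstract sectorial operator on $BUC(\R)^2$: it computes the resolvent by a variation-of-constants ODE in $x$, locates the spectrum by analyzing the imaginary roots of a quartic characteristic polynomial $\chi_\lambda$, and then extracts the exponential decay of $e^{t\mathcal A}$ from the Dunford--Taylor integral. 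Your factorization route gets the same exponential bound with far less machinery; what the paper's route buys is robustness (it would survive a loss of the commutation structure, e.g.\ if $\sigma$ were different in the two equations), but for this lemma that generality is not needed. One small caveat you should tighten: for ``any solution'' to be given by your closed formula you need uniqueness of $BUC$ solutions of the Cauchy problem, which is standard for this constant-coefficient parabolic system but worth stating; once uniqueness is invoked, the argument is complete.
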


\begin{proof}
		We show that the spectrum of the linearized operator is included in the negative complex plane. 
		The linearized operator around $(u^*,v^*)$ is given in the following form:
		\begin{equation*}
			\mathcal{A}
			\begin{pmatrix} 
				\varphi \\ \psi 
			\end{pmatrix}
			:=
			\begin{pmatrix}
				\sigma \varphi_{xx} \\ \sigma \psi_{xx} 
			\end{pmatrix}
			+
			\begin{pmatrix}
				(r_u-\mu_u-2\kappa_u u^* - \kappa_u v^*)\varphi + (\mu_v-\kappa_u v^*)\psi \\
				(\mu_u-\kappa_vu^*)\varphi+(r_v-\mu_v-\kappa_vu^* - 2\kappa_vv^*)\psi
			\end{pmatrix}.
		\end{equation*}
		We regard $\mathcal A$ as an unbounded operator acting on $(\varphi, \psi)\in BUC(\mathbb R)^2$. The operator $\mathcal A$ is sectorial and generates an analytic semigroup on $BUC(\mathbb R)^2$, as  a bounded perturbation of the unbounded operator $(\sigma \partial_{xx}, \sigma\partial_{xx})^T$ (acting on $D(\mathcal{A})=C^2_{BUC}(\mathbb R)^2$), which is sectorial and  generates an analytic semigroup on $BUC(\mathbb R)^2$ \cite[Corollary 3.1.9 p. 81]{Lun-95}.

		Let $\lambda\in \mathbb C$ and  $(g, h)\in BUC(\mathbb R)^2$ be given, and consider the resolvent equation
		\begin{equation}\label{eq:resolvent}
			(\lambda I-\mathcal A)\parmatrix{\varphi \\ \psi} = \parmatrix{g \\ h}. 
		\end{equation}
		The set of solutions of the equation \eqref{eq:resolvent} can be computed explicitly by the variation of constants formula. More precisely, we let $Y(x)=(\varphi, \varphi_x, \psi, \psi_x)^T$  and rewrite \eqref{eq:resolvent} as an ODE on $\mathbb R^4$:
		\begin{equation*}
			\frac{\dd}{\dd x}Y(x) = 
			\begin{pmatrix}
				0 & 1 & 0 & 0 \\ 
				\sigma^{-1}(\lambda-a) & 0 & -\sigma^{-1}b & 0\\
				0 & 0 & 0 & 1 \\
				-\sigma^{-1}c & 0 & \sigma^{-1}(\lambda-d) & 0
			\end{pmatrix}Y - \parmatrix{0 \\ g \\ 0 \\ h} 
			=:B_\lambda Y(x)+Z(x),
		\end{equation*}
		where $a$, $b$, $c$, $d$ are the constants introduced in Lemma \ref{lem:stat-ode-stability} to denote the coefficients of the Jacobian matrix of the nonlinearity at the equilibrium point: 
		\begin{align*}
			a&:=r_u-\mu_u-2\kappa_uu^*-\kappa_u v^*=-\left(\kappa_u u^*+\mu_v\frac{v^*}{u^*}\right)<0, & b &:=\mu_v-\kappa_u u^*, \\
			d&:=r_v-\mu_v-\kappa_v u^*-2\kappa_vv^*=-\left(\kappa_vv^*+\mu_u\frac{u^*}{v^*}\right)<0, & c &:=\mu_u-\kappa_v v^*.
		\end{align*}
		We first investigate  the bounded eigenvectors  of the ordinary differential equation  $Y'=B_\lambda Y$, which constitute the point spectrum of $\mathcal{A}$, $\sigma_P(\mathcal{A})$. These correspond to the imaginary eigenvalues of the matrix $B_\lambda$, {\it i.e.} the imaginary roots of the polynomial 
		\begin{equation*}
			\chi_\lambda(X):=X^4+\sigma^{-1}\big(a +d-2\lambda\big)X^2+\sigma^{-2}\big(\lambda^2-(a+d)\lambda +ad - bc\big).
		\end{equation*}
		We show that  $\sigma_P(\mathcal{A})$  is contained in the half-plane $\Re(z)\leq -\omega$ for $z\in\mathbb C$, where
		\begin{equation}\label{eq:omega}
			\omega:= -\frac{a+d}{2}>0.
		\end{equation}  
		Indeed, investigating the values taken by $\chi_\lambda(iX)$ for real values of $X$, we find that 
		\begin{equation*}
			\chi_\lambda(iX)=X^4-\sigma^{-1}\big((a + d)-2\lambda\big)X^2+\sigma^{-2}\big(\lambda^2-(a+d)\lambda +ad - bc\big).
		\end{equation*}
		Since $a<0$, $d<0$ and $ad-bc>0$ (see Lemma \ref{lem:stat-ode-stability}), we immediately see that $\chi_\lambda(iX)>0$ if $\lambda $ is real and $\lambda \geq \frac{a+d}{2}$. If $\Im(\lambda)\neq 0$, we remark that 
		\begin{equation*}
			\Im(\chi_\lambda (iX))=\Im(\lambda)\left[2\sigma^{-1}X^2+\sigma^{-2}(2\Re(\lambda)-(a+d))\right], 
		\end{equation*}
		therefore if $\Re(\lambda)>\frac{a+d}{2}$ the right-hand side is positive and the polynomial $\chi_\lambda(iX)$ cannot have a real root. 

		When $\lambda \in\mathbb C\backslash \sigma_P(\mathcal{A})$ then $Y$ is uniquely determined and depends continuously on $Z$. Indeed,  the set of solutions to the equation $Y'=B_\lambda Y+Z$ can be determined by the {variation of constants formula} 
		\begin{equation}\label{eq:hom-res}
			Y(x)=e^{xB_\lambda} Y_0+\int_0^xe^{(x-s)B_\lambda}Z(s)\dd z, 
		\end{equation}
		for arbitrary $Y_0\in \mathbb R^4$. Then there exists a unique choice of $Y_0$ such that $Y(x)$ remains bounded on $\mathbb R$. Indeed, the discriminant of $\chi_\lambda(X)$ considered as a second-order polynomial in $X^2$ is
		\begin{equation*}
		    D(\lambda):=\sigma^{-2}\big(a + d -2 \lambda \big)^2 - 4\sigma^{-2}\big(\lambda^2-(a+d)\lambda + ad-bc\big)=\sigma^{-2}\big((a+d)^2-ad + bc\big)>0,
		\end{equation*}
		thus is independent of $\lambda$ and positive. Hence $\chi_\lambda(X)$ has four distinct roots; by reducing $B_\lambda$ to a diagonal matrix,  $Y_0$ can be computed explicitly and the resulting $Y(x)=\big(\lambda-\mathcal{A}\big)^{-1}(Z)$ depends continuously on $Z(x)$. Since the computations are relatively long and classical, we omit them for the sake of brevity. 
		In particular, the spectrum is equal to the point spectrum $\sigma(\mathcal{A})=\sigma_P(\mathcal{A})$ and the spectral bound of $\mathcal{A}$ satisfies $s(\mathcal{A})\leq -\omega$.

		To complete our first Step we remark that  $e^{t\mathcal A}$ can be computed by the Dunford-Taylor integral
		\begin{equation*}
			e^{t\mathcal A}=\frac{1}{2i\pi}\int_{\Gamma}e^{\lambda t}(\lambda I-\mathcal A)^{-1}\dd \lambda ,
		\end{equation*}
		where $\Gamma$ is a curve joining a straight line $\{\rho e^{-i\theta}, \rho>0\}$ for some $\theta\in\left[\frac{\pi}{2}, \pi\right)$ to the straight line $\{\rho e^{-i\theta}:\rho>0\}$,  oriented so that $\Im (\lambda) $ increases on $\Gamma$, and such that $\sigma(\mathcal{A})$ is included in the left connected component of $\mathbb{C}\backslash \Gamma$. From the above computations it appears that $\Gamma $ can be chosen such that $\Re(\lambda)\leq -\frac{\omega}{2}$ (where $\omega$ is given by \eqref{eq:omega}) for all $\lambda \in \Gamma$, in which case
		\begin{equation*}
			e^{t\mathcal A} = e^{-\frac{\omega}{2} t}\cdot\frac{1}{2i\pi}\int_{\Gamma} e^{\left(\lambda + \frac{\omega}{2}\right) t}(\lambda-\mathcal A)^{-1}\dd \lambda, 
		\end{equation*}
		therefore
		\begin{align*}
			\Vert e^{t\mathcal A}\Vert_{BUC(\mathbb R)^2}&\leq e^{-\frac{\omega}{2}t}\cdot\frac{1}{2\pi} \int_{\Gamma} e^{-(\Re(\lambda)+\frac{\omega}{2})t}\Vert (\lambda-\mathcal A)^{-1}\Vert_{\mathcal L(BUC(\mathbb R)^2)}\dd\lambda\\ 
			&\leq Ce^{-\frac{\omega}{2}t},
		\end{align*}
		for all $t>0$, where $C$ depends only on $\mathcal A$ and $\omega$.
	This completes the proof of Lemma \ref{lem:linear-stability}.
\end{proof}

\begin{lem}[Uniqueness of rapidly oscillating entire solution]\label{lem:rapid-osc-unique}
	Let the assumptions of Theorem \ref{thm:rapidosc} hold. Then there exists $\overline{\ep}$ such that for any $0<\ep\leq\overline{\ep}$, the system \eqref{eq:rapidosc} possesses an entire solution $(u^\ep(t,x), v^\ep(t,x))$ that is bounded from above and from below by positive constants. Moreover, such an entire solution is unique. Hence it is a stationary solution and is $\ep$-periodic. 
\end{lem}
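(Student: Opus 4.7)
The plan is to split the proof into existence and uniqueness.

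\textbf{Existence.} By Lemma \ref{lem:convergence-speeds}, the principal eigenvalue $\lambda_1^\ep = k^\ep(0)$ of the cooperative eigenproblem \eqref{eigenproblem-lower-bound} converges to $\lambda_A > 0$ as $\ep \to 0$, so $\lambda_1^\ep > 0$ for small $\ep$. The associated $\ep$-periodic positive principal eigenvector $(\varphi^\ep, \psi^\ep)$, after scaling by a small $\alpha > 0$, yields a strict subsolution $(\alpha\varphi^\ep, \alpha\psi^\ep)$ of \eqref{eq:rapidosc} lying inside the cooperative zone \eqref{cooperative-zone}. Following the strategy used in the proof of Lemma \ref{lem:lower-spreading}, I would run the monotone iteration inside the cooperative zone (working with the auxiliary cooperative system \eqref{eq:auxiliary-below-2} where needed) to obtain an $\ep$-periodic stationary solution $(u^*_\ep(x), v^*_\ep(x))$ of \eqref{eq:rapidosc} bounded below by $\alpha(\varphi^\ep, \psi^\ep)$ and above by $\overline K$ (Proposition \ref{prop:uniform-bound}). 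In particular this is an entire solution bounded above and below by positive constants.

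\textbf{Uniqueness.} Let $(u_1, v_1)$ and $(u_2, v_2)$ be two entire solutions of \eqref{eq:rapidosc} bounded above and below by positive constants. By Lemma \ref{lem:rapid-osc-lim} applied to each, both converge uniformly on $\R\times\R$ to the homogenized equilibrium $(u^*, v^*)$ as $\ep \to 0$. The difference $(\varphi, \psi) := (u_1 - u_2, v_1 - v_2)$ solves a linear parabolic system
\begin{equation*}
\begin{pmatrix}\varphi\\ \psi\end{pmatrix}_t = \begin{pmatrix}(\sigma^\ep \varphi_x)_x\\ (\sigma^\ep \psi_x)_x\end{pmatrix} + M^\ep(t,x)\begin{pmatrix}\varphi\\ \psi\end{pmatrix},
\end{equation*}
where $M^\ep(t,x)$, obtained via the mean value theorem, coincides with the Jacobian matrix of the nonlinearity of \eqref{eq:syst-hom-linearized} at $(u^*, v^*)$ up to rapidly oscillating corrections that tend to zero in $L^\infty(\R\times\R)$ as $\ep \to 0$. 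By Lemma \ref{lem:linear-stability}, the homogenized linearization $\mathcal A$ is exponentially stable on $\BUC(\R)^2$. A homogenization-plus-perturbation argument on the evolution semigroup then yields a uniform estimate $\|\mathcal T_\ep(t,s)\|_{\BUC(\R)^2} \leq C e^{-\omega(t-s)/2}$ for all $t \geq s$, valid for small $\ep$. Since $(\varphi, \psi)$ is entire and bounded, writing $(\varphi(t_0), \psi(t_0)) = \mathcal T_\ep(t_0, s)(\varphi(s), \psi(s))$ and sending $s \to -\infty$ forces $(\varphi, \psi) \equiv 0$. Stationarity and $\ep$-periodicity of $(u^\ep, v^\ep)$ then follow from translation invariance: for any $s \in \R$ and $k \in \mathbb Z$, the translates $(u^\ep(\cdot + s, \cdot), v^\ep(\cdot + s, \cdot))$ and $(u^\ep(\cdot, \cdot + k\ep), v^\ep(\cdot, \cdot + k\ep))$ are again entire solutions bounded above and below by the same constants and so must coincide with $(u^\ep, v^\ep)$.

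\textbf{Main obstacle.} The key technical step is the uniform-in-$\ep$ exponential decay of $\mathcal T_\ep$ on $\BUC(\R)^2$. Its coefficients are rapidly oscillating in $x$ and time-dependent through $(u_i, v_i)$, so neither classical spectral theory nor a naive pointwise perturbation of $\mathcal A$ applies directly. I would proceed either via a quantitative two-scale homogenization estimate comparing $\mathcal T_\ep(t,s)$ to $e^{(t-s)\mathcal A}$ on bounded time windows and bootstrapping by the semigroup property, or by adapting the ODE Lyapunov functional of Lemma \ref{lem:Lyapunov-ode} to a functional on $\BUC(\R)^2$ controlling the $L^\infty$-norm of $(\varphi, \psi)$.
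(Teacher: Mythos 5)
Your overall plan for uniqueness — exploit the linear stability of the homogenized equilibrium from Lemma~\ref{lem:linear-stability} — points in the right direction, but you reduce it to a claim you do not establish, namely a \emph{uniform-in-$\ep$} exponential decay estimate $\|\mathcal T_\ep(t,s)\|\leq Ce^{-\omega(t-s)/2}$ for the evolution family with rapidly oscillating, time-dependent coefficients. You correctly flag this as the ``main obstacle,'' but leaving it unresolved leaves the uniqueness proof with a genuine gap. The paper never needs such a uniform operator estimate: it argues by contradiction, supposing uniqueness fails along a sequence $\ep_n\to 0$, defines the \emph{normalized} differences $\varphi^{\ep_n}=(u_2^{\ep_n}-u_1^{\ep_n})/\delta_n$, $\psi^{\ep_n}=(v_2^{\ep_n}-v_1^{\ep_n})/\delta_n$, and passes these through the same compactness-plus-homogenization machinery already developed in Step~3 of the proof of Lemma~\ref{lem:rapid-osc-lim}. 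Because $(u_i^{\ep_n},v_i^{\ep_n})\to(u^*,v^*)$ locally uniformly by Lemma~\ref{lem:rapid-osc-lim}, the normalized differences converge (along a subsequence) to a \emph{bounded, nontrivial, entire} solution of the homogenized linearization \eqref{eq:syst-hom-linearized}, which is forbidden by the exponential stability in Lemma~\ref{lem:linear-stability} (applied backward in time from an arbitrary starting instant). This blow-up/Liouville argument is qualitatively softer than what you propose, yet suffices for the non-quantitative statement ``there exists $\overline\ep$.'' Incidentally, your assertion that Lemma~\ref{lem:rapid-osc-lim} gives convergence ``uniformly on $\R\times\R$'' overstates it: the lemma gives \emph{locally} uniform convergence, which is all the contradiction argument needs, but which would not by itself deliver the global $L^\infty$ smallness of the oscillating corrections your perturbation scheme requires.

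On existence, there is also a mismatch that would need repair: following the monotone iteration of Lemma~\ref{lem:lower-spreading} yields an $\ep$-periodic stationary solution of the \emph{auxiliary} cooperative system \eqref{eq:auxiliary-below} (equivalently \eqref{eq:auxiliary-below-2}), not of \eqref{eq:rapidosc} itself; the extra $-\beta\tilde u^2$, $-\beta\tilde v^2$ terms are precisely what keeps that iteration inside the cooperative zone, and because of them a stationary solution of the auxiliary system is only a subsolution of \eqref{eq:rapidosc}. To run the monotone iteration directly on \eqref{eq:rapidosc} you would need the cooperative zone to be forward-invariant for the true flow starting from $\alpha(\varphi^\ep,\psi^\ep)$, which is not guaranteed by the hypotheses. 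The paper avoids this by a more elementary route: it takes any Cauchy solution, uses the global upper bound from Proposition~\ref{prop:uniform-bound} together with the hair-trigger lower bound \eqref{eq:hairtrigger-below} from Theorem~\ref{thm:hairtrigger}, and extracts a limit of time-translates $(u(t+t_n,\cdot),v(t+t_n,\cdot))$ with $t_n\to+\infty$, obtaining an entire solution pinched between positive constants. Stationarity and $\ep$-periodicity are then deduced a posteriori from the uniqueness statement, exactly as you describe at the end.
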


\begin{proof}
    We first show the existence. Let $(u(t,x),v(t,x))$ be a solution of \eqref{eq:rapidosc} with nonnegative nontrivial initial data $\big(u_0(x),v_0(x)\big)$ and choose a constant $M>0$ such that $u_0(x)+v_0(x)\leq M$ for all $x\in\R$.  Then from the inequality \eqref{wep}, we see that
\[
u(t,x)+v(t,x)\leq \max\left(\frac{r_{\max}}{\kappa_{\min}}, M\right):=M'\quad\hbox{for all}\ \ t\geq 0,\ x\in\R.
\]
Therefore $(u(t,x),v(t,x))$ is bounded from above. Next, by Theorem~\ref{thm:hairtrigger}, there exists $\eta>0$ such that \eqref{eq:hairtrigger-below} holds. Consequently, by choosing a sequence $t_n\to+\infty$ appropriately, the family of functions $(u(t+t_n,x),v(t+t_n,x))$ converges to an entire solution $\big(u_\infty(t,x),v_\infty(t,x)\big)$ of \eqref{eq:rapidosc} satisfying $\eta\leq u_\infty(t,x), v_\infty(t,x) \leq M'$ for all $t\in\R, x\in\R$. 

Next we prove the uniqueness. Assume by contradiction that there exists a sequence $\ep_n>0$ and  two sequences of bounded nonnegative nontrivial entire solutions $(u_1^{\ep_n}(t,x), v_1^{\ep_n}(t,x))\not\equiv(u_2^{\ep_n}(t, x), v_2^{\ep_n}(t, x))$ of \eqref{eq:rapidosc}. 
	Define 
	\begin{align*}
	    \delta_n&:=\max\left(\Vert u_2^{\ep_n}(t,x)-u_1^{\ep_n}(t,x)\Vert_{BUC(\mathbb R)^2},\Vert v_2^{\ep_n}(t,x)-v_1^{\ep_n}(t,x)\Vert_{BUC(\mathbb R)^2}\right),\\
		\varphi^{\ep_n}(t,x)&:=\frac{1}{\delta_n}(u_2^{\ep_n}(t,x)-u_1^{\ep_n}(t,x)), \\
		\psi^{\ep_n}(t,x)&:=\frac{1}{\delta_n}(v_2^{\ep_n}(t,x)-v_1^{\ep_n}(t,x)).
	\end{align*}
	With an appropriate shift in time and space, we may assume without loss of generality that  
	\begin{equation}\label{eq:rapid-osc-unique-norm}
		\frac{\delta_n}{2}\leq \sup_{x\in (0,L)}\big(\max(| u_2^{\ep_n}(0,x)-u_1^{\ep_n}(0,x)|, | v_2^{\ep_n}(0,x)-v_1^{\ep_n}(0,x)|)\big) \leq \delta_n.
	\end{equation}
	Then $(\varphi^{\ep_n}(t,x), \psi^{\ep_n}(t,x)) $ satisfy:
	\begin{equation*}
		\begin{system}
		    \relax &\varphi^{\ep_n}_t-\big(\sigma^{\ep_n}\varphi^{\ep_n}_x\big)_x = (r_u^{\ep_n}-\mu_u^{\ep_n})\varphi^{\ep_n} + \mu_v^{\ep_n}\psi^{\ep_n}  - \kappa_u^{\ep_n}(2u^{\ep_n}_2+v^{\ep_n}_2)\varphi^{\ep_n} - \kappa_u^{\ep_n}u^{\ep_n}_2\psi^{\ep_n}+o(1),  \\ 
			\relax &\psi^{\ep_n}_t-\big(\sigma^{\ep_n}\psi^{\ep_n}_{x}\big)_x=\mu_u^{\ep_n}\varphi^{\ep_n} + (r_v^{\ep_n}-\mu_v^{\ep_n})\psi^{\ep_n} - \kappa_v^{\ep_n}v^{\ep_n}_2\varphi^{\ep_n} - \kappa_v^{\ep_n}(u^{\ep_n}_2+2v^{\ep_n}_2)\psi^{\ep_n}+o(1), 
		\end{system}
	\end{equation*}
	where $o(1)$ denotes a remainder term that tends to $0$ as $n\to\infty$ locally uniformly with respect to $(t, x)\in\R\times\R$.
	Indeed, by virtue of Lemma \ref{lem:rapid-osc-lim}, there holds 
	\begin{equation*}
	    (u^{\ep_n}_1, v^{\ep_n}_1) \to (u^*, v^*) \ \text{and}\ (u^{\ep_n}_2, v^{\ep_n}_2) \to (u^*, v^*) \ \ \text{locally uniformly as}\ n\to+\infty.
	\end{equation*}
	Since $\varphi^{\ep_n}(t, x)$ and $\psi^{\ep_n}(t, x) $ are bounded, by following the arguments presented in Step 3 of the proof of Lemma \ref{lem:rapid-osc-lim}, the classical homogenization theory then  leads to the convergence (up to an extraction of a subsequence) of $(\varphi^{\ep_n}(t, x), \psi^{\ep_n}(t, x))$ to  $(\varphi(t, x), \psi(t, x))$ solving 
	\begin{equation*}
		\begin{system}
			\relax &\varphi_t-\overline{\sigma}^H\varphi_{xx} = (\overline{r_u}-\overline{\mu_u})\varphi + \overline{\mu_v}\psi  - \overline{\kappa_u}(2u^*+v^*)\varphi - \overline{\kappa_u}u^*\psi  \\ 
			\relax &\psi_t-\overline{\sigma}^H \psi_{xx}=\overline{\mu_u}\varphi + (\overline{r_v}-\overline{\mu_v})\psi - \overline{\kappa_v}v^*\varphi - \overline{\kappa_v}(u^*+2v^*)\psi,
		\end{system}
	\end{equation*}
	and the convergence holds locally uniformly in $(t, x)\in\R\times \R$. 
	Because of our normalization \eqref{eq:rapid-osc-unique-norm}, the limit function $\big(\varphi(t, x), \psi(t, x)\big)$ is nontrivial and bounded on $\R\times \R$, which is in contradiction with Lemma~\ref{lem:linear-stability}.  This establishes the uniqueness. 
	
	In order to prove the last claim, note that, for any $\tau\in\R$, $(u^\ep(t+\tau,x), v^\ep(t+\tau,x))$ is also an entire solution of \eqref{eq:rapidosc} with the same upper and lower bounds. The uniqueness then implies $(u^\ep(t+\tau,x), v^\ep(t+\tau,x))\equiv (u^\ep(t,x), v^\ep(t,x))$, which means that $(u^\ep(t,x), v^\ep(t,x))$ is a stationary solution. Similarly, $(u^\ep(t,x+\ep), v^\ep(t,x+\ep))$ is an entire solution since the coefficients are $\ep$-periodic. Hence $(u^\ep(t,x+\ep), v^\ep(t,x+\ep))\equiv (u^\ep(t,x), v^\ep(t,x))$. 
The proof of Lemma~\ref{lem:rapid-osc-unique} is complete. 
\end{proof}

\begin{proof}[Proof of Theorem \ref{thm:rapidosc}]
As regards Statement \ref{item:rapid-osc-unique}, the existence and uniqueness of a positive stationary solution $(u^*_\ep(x),v^*_\ep(x))$, as well as its $\ep$-periodicity, are a direct consequence of Lemma~\ref{lem:rapid-osc-unique}. The convergence $(u^*_\ep(x),v^*_\ep(x))\to (u^*,v^*)$ follows from Lemma~\ref{lem:rapid-osc-lim}. Statement \ref{item:rapid-osc-speeds} is already proved in Lemma~\ref{lem:convergence-speeds}.

    Let us prove Statement \ref{item:rapid-osc-cv}. Let $\ep>0$ be sufficiently small, so that $k^\ep(\lambda)>0$ for all $\lambda\in\R$. Then $c^*_{\ep, R}$ and $c^*_{\ep, L}$ are both positive, and \eqref{right2a} and \eqref{left2b} of Theorem~\ref{thm:hairtrigger} hold for any solution of \eqref{eq:rapidosc} with nonnegative nontrivial bounded initial data. We argue by contradiction. 
    Suppose that \eqref{uep(t,x)-to-u*ep} does not hold for some $c_1, c_2$ with $0<c_1<c^*_{\ep,L}$, $0<c_2<c^*_{\ep,R}$. Then there exist a constant $\delta>0$ and sequences $t_n\to+\infty$ and $x_n$ with $-c_1t_n\leq x_n\leq c_2 t_n$ such that
    \[
    \max\big(|u(t_n,x_n)-u^*_\ep(x_n)|, |v(t_n,x_n)-v^*_\ep(x_n)|\big)\geq \delta\quad \hbox{for}\ \ n=1,2,3,\ldots.
    \]
    Without loss of generality, we may assume that $|u(t_n,x_n)-u^*_\ep(x_n)|\geq \delta$ for $n=1,2,3,\ldots$ 

    Let $m_n\,(n\in{\mathbb N})$ be the integers such that $m_n\ep\leq x_n<(m_n+1)\ep\ (n=1,2,3,\ldots)$. Replacing by a subsequence if necessary, we may assume that $x_n-m_n\ep\to x^*$ as $n\to\infty$ for some $x^*\in[0,\ep]$ and that $\big(u(t+t_n,x+m_n\ep),v(t+t_n, x+m_n\ep)\big)$ converges locally uniformly to an entire solution $(U(t,x),V(t,x))$ of \eqref{eq:rapidosc} as $n\to\infty$. By the construction, we have
    \begin{equation}\label{U-u*ep(x*)}
    |U(0,x^*)-u^*_\ep(x^*)|\geq \delta.
    \end{equation}
    Now we choose constant $\tilde{c_1}, \tilde{c_2}$ satisfying $0<c_1<\tilde{c_1}<c^*_{\ep,L}$, $0<c_2<\tilde{c_2}<c^*_{\ep,R}$. Then, by Theorem~\ref{thm:hairtrigger}, we have
    \[
    \liminf_{t\to\infty}\min_{-\tilde{c_1}t\leq x\leq \tilde{c_2}t}\min\big(u(t,x),v(t,x)\big)\geq \eta
    \]
    for some constant $\eta>0$. In view of this and the fact that $-c_1t\leq x_n\leq c_2t\,(n\in{\mathbb N})$, we see that 
    \[
    U(t,x)\geq \eta, \ \ V(t,x)\geq \eta\quad\hbox{for all}\ \ (t,x)\in\R\times\R.
    \]
    Then, by Lemma~\ref{lem:rapid-osc-unique}, $(U(t,x),V(t,x))$ coincides with the stationary solution $(u^*_\ep(x),v^*_\ep(x))$, but this contradicts \eqref{U-u*ep(x*)}. This proves Statement \ref{item:rapid-osc-cv}. The proof of Theorem~\ref{thm:rapidosc} is complete.
\end{proof}

\printbibliography


\end{document}